\documentclass[preprint,10pt]{elsarticle}
\usepackage[utf8]{inputenc}
\usepackage[T1]{fontenc}
\usepackage{lmodern}
\usepackage{geometry}
\usepackage{amsmath,amssymb,amsthm,mathtools}
\usepackage{enumitem}
\usepackage{hyperref}
\hypersetup{colorlinks=true,linkcolor=blue,citecolor=blue,urlcolor=blue}
\usepackage{microtype}
\usepackage{booktabs}
\newtheorem{theorem}{Theorem}[section]
\newtheorem{assumption}[theorem]{Assumption}
\newtheorem{definition}[theorem]{Definition}
\newtheorem{lemma}[theorem]{Lemma}
\newtheorem{corollary}[theorem]{Corollary}
\newtheorem{remark}[theorem]{Remark}

\newcommand{\R}{\mathbb{R}}
\newcommand{\T}{\mathbb{T}}
\newcommand{\Z}{\mathbb{Z}}
\newcommand{\per}{\mathrm{per}}
\newcommand{\e}{\mathrm e}
\newcommand{\unif}{\mathrm{unif}}
\newcommand{\loc}{\mathrm{loc}}
\newcommand{\diverg}{\operatorname{div}}
\newcommand{\osc}{\operatorname*{osc}}

\newcommand{\eps}{\varepsilon}

\newcommand{\avg}[1]{\langle #1\rangle}
\newcommand{\Adiv}{A^{\mathrm{div}}}
\newcommand{\Ahdiv}{\widehat A^{\mathrm{div}}}
\newcommand{\aper}{a^{\per}}
\newcommand{\bper}{b^{\per}}
\newcommand{\mper}{m^{\per}}
\newcommand{\arme}{a^{\rm e}}
\newcommand{\brme}{b^{\rm e}}
\newcommand{\haper}{\widehat{a}^{\rm per}}
\newcommand{\hae}{\widehat{a}^{\rm e}}
\newcommand{\hbe}{\widehat{b}^{\rm e}}

\newcommand{\hm}{\widehat{m}}

\newcommand{\curl}{\operatorname{curl}}

\journal{Journal de Math\'ematiques Pures et Appliqu\'ees}

\begin{document}

\begin{frontmatter}

\title{A two-dimensional structural local-defect theory for scalar non-divergence\\
advection--diffusion homogenization}

\author[a,b]{Jizu Huang}
\author[a,b]{Yong Ma\corref{cor1}}
\ead{mayong@amss.ac.cn}
\cortext[cor1]{Corresponding author.}
\address[a]{SKLMS, Academy of Mathematics and Systems Science,
Chinese Academy of Sciences, Beijing 100190, P.R. China}

\address[b]{School of Mathematical Sciences,
University of Chinese Academy of Sciences, Beijing 100190, P.R. China}
\begin{abstract}
We develop a two-dimensional structural local-defect theory for scalar non-divergence
advection--diffusion operators
\[
 Lu=-a:D^2u+b\cdot\nabla u,
 \qquad a=\aper+\arme,
 \qquad b=\bper+\brme.
\]
The periodic coefficients and the bounded local defects are uniformly H\"older continuous,
the interpolating matrices \(a_t=\aper+t\arme\) are symmetric and uniformly elliptic, and
\(\arme\in L^r(\R^2)\), \(\brme\in L^s(\R^2)\), where \(1<r,s<2\). Under the periodic
centering condition \(\avg{\mper\bper}=0\), global harmonic coordinates remove the periodic
drift. After blow-down, the transformed defect drift is small in the critical local \(L^2\)
space. Critical-drift compactness then yields a finite-energy Liouville theorem and closes the
continuation argument, giving a whole-space estimate for \(1<q<2\) and
\(1/q^*=1/q-1/2\). This estimate provides defect correctors and, by duality, a positive
invariant density \(m=\mper+m^{\e}\). A planar Hodge construction in harmonic coordinates,
followed by a Piola pull-back, produces a skew-symmetric field \(B=B^{\per}+B^{\e}\) such that
\[
 mLu=-\diverg\bigl((ma-B)\nabla u\bigr).
\]
If \(M=\max\{r,s\}\) and \(M^*=2M/(2-M)\), then, for some \(\beta>0\), the defects
\(B^{\e}\) and \(A^{\e}:=ma-B-\Adiv_{\per}\) belong to
\(L^{M^*}\cap L^\infty\cap C^{0,\beta}_{\unif}\) and vanish uniformly at infinity. This
supplies the missing two-dimensional structural reduction in the scalar regular non-endpoint
regime.
\end{abstract}

\begin{keyword}
homogenization \sep local defects \sep advection--diffusion \sep invariant measure \sep
Liouville theorem \sep harmonic coordinates \sep Hodge decomposition \sep Piola transform
\MSC[2020] 35B27 \sep 35J15 \sep 35J70 \sep 35B45 \sep 42B20
\end{keyword}

\end{frontmatter}

\section{Introduction}

We consider scalar non-divergence advection--diffusion operators in dimension two,
\begin{equation}\label{eq:intro-operator}
 Lu=-a:D^2u+b\cdot\nabla u,
 \qquad a=\aper+\arme,
 \qquad b=\bper+\brme.
\end{equation}
The periodic coefficients belong to \(C^{0,\tau}_{\per}(\R^2)\), while the bounded local
defects are uniformly H\"older continuous and satisfy
\[
 \arme\in L^r(\R^2),\qquad \brme\in L^s(\R^2),\qquad 1<r,s<2.
\]
For \(t\in[0,1]\), we set
\[
 L_t=-a_t:D^2+b_t\cdot\nabla,\qquad
 a_t=\aper+t\arme,\qquad b_t=\bper+t\brme,
\]
and assume that the symmetric matrices \(a_t\) are uniformly elliptic, independently of \(t\).
Writing \(\mper\) for the normalized positive periodic adjoint density, our structural
compatibility condition is
\begin{equation}\label{eq:intro-centering}
 \avg{\mper\bper}=0.
\end{equation}

Periodic homogenization is classical; see, for example,
\cite{AvellanedaLin,BLP}. Blanc, Le Bris and Lions developed the local-defect program for
divergence-form equations and local profiles \cite{BLLMilan,BLLCRAS,BLLLocalProfiles}, and then
for non-divergence equations without drift \cite{BLLNoDrift}. In the divergence-form setting,
Blanc, Josien and Le Bris subsequently obtained precise defect-adapted approximations, including
\(W^{1,p}\) and Lipschitz-type estimates with explicit rates under the hypotheses of their work
\cite{BJL2020}. That quantitative theory explains the value of reaching a localized-periodic
divergence-form coefficient class, but no quantitative two-scale estimate is proved in the present
paper.

For advection--diffusion operators, Blanc, Le Bris and Lions used an invariant density to obtain
the required divergence-form structure in dimensions \(d\ge3\) \cite{BLL,BLLErratum}. We use
throughout the corrected non-endpoint formulation recorded in their erratum, including the
centering condition \eqref{eq:intro-centering}. With the sign convention in
\eqref{eq:intro-operator}, the invariant equation is
\[
 -\partial_i\bigl(\partial_j(a_{ij}m)+b_i m\bigr)=0.
\]
If a skew-symmetric matrix field \(B\) satisfies
\[
 \diverg B=mb+\diverg(ma),
\]
then
\begin{equation}\label{eq:intro-reduction}
 mLu=-\diverg\bigl((ma-B)\nabla u\bigr).
\end{equation}
This invariant-measure identity is the structural bridge from the non-divergence problem to the
divergence-form local-defect setting.

The higher-dimensional proof does not extend directly to the plane. There are two distinct
obstructions. First, the exponent range used in the closedness step of the whole-space
continuation argument becomes empty when \(d=2\). Second, the defect part of \(B\) is represented
in two dimensions by a scalar stream function, whose first-order potential has borderline
far-field behaviour. The present paper resolves these two planar problems separately.

The first mechanism is a geometric change of variables. Condition \eqref{eq:intro-centering} is
exactly the Fredholm compatibility condition for the periodic cell problems
\[
 P_\alpha(x)=x_\alpha+\chi_\alpha(x),\qquad L_{\per}P_\alpha=0,\qquad \alpha=1,2.
\]
The periodic \(\sigma\)-harmonic mapping theory and the planar Lewy-type nonvanishing theorem of
Alessandrini and Nesi imply that \(P=x+\chi\) is a global uniformly bi-Lipschitz diffeomorphism
\cite{AlessandriniNesi1,AlessandriniNesi2}. In the variables \(Y=P(x)\), the periodic drift
vanishes. The remaining drift is
\[
 \widehat b^{\e}_{\alpha}(Y)
 =-\arme(x):D^2P_\alpha(x)+\brme(x)\cdot\nabla P_\alpha(x),
 \qquad x=P^{-1}(Y),
\]
and belongs to \(L^M(\R^2)\cap L^\infty(\R^2)\), where
\[
 M=\max\{r,s\}\in(1,2),\qquad M^*=\frac{2M}{2-M}>2.
\]
After blow-down, its rescaled \(L^2\)-norm tends to zero on fixed annuli. Mooney's
critical-drift estimates, applied only to strong \(W^{2,2}\) solutions, provide the compactness
needed for a finite-energy Liouville theorem \cite{Mooney}. This replaces the missing planar
closedness argument and yields, for \(1<q<2\) and \(1/q^*=1/q-1/2\),
\begin{equation}\label{eq:intro-central}
 \|\nabla u\|_{L^{q^*}(\R^2)}+\|D^2u\|_{L^{q^*}(\R^2)}
 \le C_q\bigl(\|f\|_{L^q(\R^2)}+\|f\|_{L^{q^*}(\R^2)}\bigr),
\end{equation}
uniformly along the family \(L_tu=f\).

Estimate \eqref{eq:intro-central} gives defect correctors and, by duality, an invariant density
\(m=\mper+m^{\e}\). The density \(m\) is positive and bounded away from zero, while
\(m^{\e}(x)\to0\) uniformly as \(|x|\to\infty\). Its local boundedness and uniform H\"older
regularity follow from the double-divergence/Fokker--Planck theory of
Bogachev--Shaposhnikov \cite{BogachevShaposhnikov}; positivity and the comparison argument use
Bauman's adjoint maximum principle \cite{Bauman}.

The second mechanism is a planar Hodge construction in harmonic coordinates. The defect source
has the form \(H+\diverg Q\), with \(H\in L^M\cap L^\gamma\) for every finite \(\gamma>2\) and
\(Q\in L^{M^*}\cap L^\infty\cap C^{0,\theta}_{\unif}\). A first-order potential controls the
\(H\)-part, while cancellation in a zero-order Calder\'on--Zygmund operator controls the
\(Q\)-part. A Piola pull-back produces \(B=B^{\per}+B^{\e}\) and the identity
\eqref{eq:intro-reduction}. For some \(\beta>0\),
\[
 B^{\e},A^{\e}\in L^{M^*}(\R^2)\cap L^\infty(\R^2)
 \cap C^{0,\beta}_{\unif}(\R^2),
 \qquad A^{\e}=m^{\e}\aper+m\arme-B^{\e},
\]
and both defects vanish uniformly at infinity. Since \(B\) is skew-symmetric,
\[
 \xi\cdot (ma-B)\xi=m\,\xi\cdot a\xi;
\]
hence the divergence-form coefficient is coercive by the positive lower bound for \(m\) and the
uniform ellipticity of \(a\).

The contribution is structural: it supplies the two-dimensional reduction to a coercive
divergence-form coefficient which is a localized perturbation of a periodic one. Applying any
particular downstream homogenization theorem still requires its remaining hypotheses. Related
developments for almost translation-invariant coefficients, nonlinear \(p\)-Laplace equations,
and variational closure under nonperiodic perturbations can be found in
\cite{BraidesDalMasoLeBris,Goudey,Wolf}. Section~2 states the assumptions and main results.
Section~3 constructs harmonic coordinates and proves the Liouville theorem, central estimate and
corrector result. Section~4 constructs the invariant density. Section~5 proves the Hodge estimate
and the Piola reduction.

\section{Assumptions and main results}

We work in dimension two. Repeated indices are summed. For a matrix field $Q$,
$(\diverg Q)_j=\partial_iQ_{ij}$. We identify $\T^2=\R^2/\Z^2$.

Throughout the paper, hatted quantities are in harmonic coordinates $Y=P(x)$, while unhatted
quantities are in the original variables $x$. We denote the inverse harmonic-coordinate map by
$\Theta=P^{-1}$, so that $x=\Theta(Y)$. The symbols $\Adiv_{\per}$ and $\Ahdiv_{\per}$ denote,
respectively, the original and transformed periodic divergence-form representatives, whereas
$A=m\,a-B$ denotes the final full divergence-form coefficient. The letter $Q$ is reserved for
matrix fields in Hodge-type decompositions, such as $F=H+\diverg Q$.

\medskip

For the reader's convenience, the main notational conventions are summarized below.

\begin{center}
\begin{tabular}{ll}
\toprule
\textbf{Symbol} & \textbf{Meaning} \\
\midrule
\(x\) & original coordinate \\
\(Y=P(x)\) & harmonic coordinate \\
\(\Theta=P^{-1}\) & inverse harmonic-coordinate map \\
\(\widehat{\cdot}\) & quantity written in the \(Y\)-coordinates \\
\(\widehat a_{\rm per}\) & transformed periodic non-divergence coefficient \\
\(\hae\) & transformed local defect of the second-order coefficient \\
\(\hbe\) & transformed local defect drift in harmonic coordinates \\
\(A_{\rm per}^{\rm div}\) & original periodic divergence-form representative \\
\(\widehat A_{\rm per}^{\rm div}\) & transformed periodic divergence-form representative \\
\(B_{\rm per}\) & original periodic skew potential \\
\(\widehat B_{\rm per}\) & transformed periodic skew potential \\
\(m=\mper+m^{\e}\) & full invariant density in the original variables \\
\(\hm=\widehat m^{\per}+\widehat m^{\e}\) & push-forward invariant density in harmonic coordinates \\
\(B=B^{\per}+B^{\e}\) & full skew potential in the original variables \\
\(A=ma-B\) & final full divergence-form coefficient \\
\(Q\) & generic matrix field used in Hodge decompositions, as in \(F=H+\diverg Q\) \\
\bottomrule
\end{tabular}
\end{center}

\medskip

\begin{assumption}[Regular scalar two-dimensional setting]
The periodic coefficients $\aper,\bper$ are $\Z^2$-periodic and belong to
$\left(C^{0,\tau}_{\per}(\mathbb{R}^2)\right)^{2\times 2}$ and $\left(C^{0,\tau}_{\per}(\mathbb{R}^2)\right)^{2}$ for some $\tau\in(0,1)$. The matrices $\aper$ and
\[
 a_t=\aper+t \arme
\]
are symmetric, and there exist constants $0<\lambda\leq\Lambda<\infty$ such that
\[
 \lambda |\xi|^2\leq \xi\cdot a_t(x)\xi\leq \Lambda |\xi|^2
 \qquad \forall x\in\R^2,\ \forall t\in[0,1],\ \forall \xi\in\R^2.
\]
The defects satisfy
\begin{align}
 \arme&\in \left( C^{0,\tau}_{\unif}(\R^2)\right)^{2\times 2}\cap \left(L^\infty(\R^2)\right)^{2\times 2}\cap \left(L^r(\R^2)\right)^{2\times 2},\label{eaass}\\
 \brme&\in \left( C^{0,\tau}_{\unif}(\R^2)\right)^{2}\cap\left( L^\infty(\R^2)\right)^{2}\cap \left(L^s(\R^2)\right)^{2},\label{ebass}
\end{align}
where $1<r,s<2$. We set
\[
 L_tu=-a_t:D^2u+b_t\cdot \nabla u,
 \qquad b_t=\bper+t \brme.
\]
\end{assumption}

\begin{lemma}[Periodic invariant density]\label{lem:per-density}
There is a unique normalized positive periodic adjoint density $\mper$ satisfying
\begin{equation}
 -\partial_i\bigl(\partial_j(a^{\per}_{ij}\mper)+b^{\per}_i\mper\bigr)=0
 \quad\text{on }\T^2,
 \qquad \avg{\mper}=1.
\end{equation}
Moreover $\mper$ is bounded above and below by positive constants and belongs to
$C^{0,\theta_0}_{\per}(\mathbb{T}^2)$ for some $\theta_0\in(0,1)$.
\end{lemma}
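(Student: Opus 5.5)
We prove existence and uniqueness by Fredholm theory for the periodic operator on the torus. Positivity and the two-sided bounds come from the Krein--Rutman / Bauman picture, and regularity from the double-divergence theory of Bogachev--Shaposhnikov.

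\emph{Step 1: the primal operator.} Consider \(L_{\per}=-\aper:D^2+\bper\cdot\nabla\) on \(\T^2\). Since \(\aper,\bper\in C^{0,\tau}_{\per}\) and \(\aper\) is uniformly elliptic, Schauder theory gives that, for \(\mu>0\) large, \(L_{\per}+\mu\colon C^{2,\tau}(\T^2)\to C^{0,\tau}(\T^2)\) is an isomorphism. Its inverse is compact on \(C(\T^2)\), so \(L_{\per}\) is Fredholm of index zero. By the strong maximum principle on the compact torus, any \(u\) with \(L_{\per}u=0\) attains an interior maximum and is therefore constant. Hence \(\ker L_{\per}=\mathrm{span}\{1\}\). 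Consequently the kernel of the adjoint, taken in the space of (signed) Radon measures on \(\T^2\), is one-dimensional.

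\emph{Step 2: a positive density.} Next we produce a probability measure \(\mu\) with \(L_{\per}^*\mu=0\) in the distributional sense, i.e. \(\int L_{\per}\varphi\,d\mu=0\) for all \(\varphi\in C^\infty(\T^2)\). Two routes are available. One is Krein--Rutman applied to the positive compact resolvent \((L_{\per}+\mu)^{-1}\), giving a positive eigenfunctional. The other is Markov--Kakutani applied to the Feller semigroup generated by \(-L_{\per}\), giving an invariant probability measure. The measure \(\mu\) solves the stationary Fokker--Planck (double-divergence) equation
\[
 \partial_i\partial_j(a^{\per}_{ij}\mu)+\partial_i(b^{\per}_i\mu)=0
\]
with Hölder diffusion and bounded drift. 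By the Bogachev--Shaposhnikov regularity theory \cite{BogachevShaposhnikov}, \(\mu\) has a density \(\mper\) that is locally bounded and lies in \(C^{0,\theta_0}\) for some \(\theta_0\in(0,1)\). By periodicity, these local estimates hold uniformly on \(\T^2\).

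\emph{Step 3: bounds from below and uniqueness.} For the lower bound, use Bauman's adjoint Harnack inequality \cite{Bauman}. Nonnegative adjoint solutions satisfy a Harnack inequality on unit balls, with constants depending only on \(\lambda,\Lambda\), the modulus of continuity of \(\aper\), and \(\|\bper\|_\infty\). Since \(\mper\ge0\) and \(\avg{\mper}=1\), \(\mper\) is positive somewhere. Covering \(\T^2\) by finitely many balls then yields \(c\le\mper\le C\), and the upper bound is consistent with Step 2. For uniqueness, suppose \(m_1,m_2\) are two normalized solutions. Then \(m_1-m_2\) lies in the one-dimensional adjoint kernel from Step 1, so \(m_1-m_2=\kappa\,m_1\) for some constant \(\kappa\); averaging gives \(\kappa=0\).

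\emph{Main obstacle.} The delicate point is Steps 2--3: showing that the adjoint kernel element has a Hölder, strictly positive density under only \(C^{0,\tau}\) coefficients. Because \(\aper\) is not differentiable, the equation cannot be rewritten in divergence form, and the standard positivity arguments do not apply. Both regularity and positivity therefore have to be imported from the double-divergence theory \cite{BogachevShaposhnikov} and Bauman's adjoint Harnack/comparison results \cite{Bauman}. The remaining work is to check that their hypotheses (Dini or Hölder continuity of \(a\), bounded drift) are met uniformly on the torus.
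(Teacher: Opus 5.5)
Your proposal is correct and follows essentially the same route as the paper, which simply cites the Fredholm alternative on the torus for uniqueness up to normalization, and the Bogachev--Shaposhnikov double-divergence theory for H\"older regularity and the Harnack-type two-sided bounds. You make explicit what the paper leaves implicit: that the kernel of \(L_{\per}\) consists of constants (strong maximum principle), and that a positive adjoint measure exists (Krein--Rutman or an invariant measure of the associated diffusion). Taking the Harnack inequality from Bauman rather than from Bogachev--Shaposhnikov's Corollary~3.6 makes no essential difference.
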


\begin{proof}
This is the standard periodic adjoint theory for non-divergence operators with H\"older
coefficients. The Fredholm alternative gives uniqueness up to normalization, and the local
regularity and Harnack principle for double-divergence/Fokker--Planck densities give the stated
bounds; see Bogachev--Shaposhnikov \cite[Theorems~2.1 and~3.1, Corollary~3.6]{BogachevShaposhnikov}.
\end{proof}

\begin{assumption}[Periodic centering]\label{ass:centering}
For the density $\mper$ of Lemma~\ref{lem:per-density}, we assume
\begin{equation}
 \avg{\mper\,\bper}=0.
\end{equation}
\end{assumption}

\begin{definition}
Let
\begin{equation}
 M=\max\{r,s\},\qquad M^*=\frac{2M}{2-M}.
\end{equation}
Then $1<M<2$ and $M^*>2$.
\end{definition}

\begin{theorem}[Harmonic-coordinate gauge]\label{thm:gauge}
Under Assumption~2.1 and the periodic centering assumption~\ref{ass:centering}, there exists
\[
 P(x)=x+\chi(x),\qquad \chi\in (C^{2,\tau'}_{\per}(\R^2))^2,
\]
for some $\tau'\in(0,\tau]$, such that $L_{\per}P_\alpha=0$, $\alpha=1,2$. The map
$P:\R^2\to\R^2$ is a uniformly bi-Lipschitz $C^{2,\tau'}_{\rm loc}$ diffeomorphism, with
$DP$ and $D^2P$ bounded and periodic. If $Y=P(x)$, then, for every $t\in[0,1]$ and every $1<p<\infty$, the operator
$L_t:W^{2,p}_{\rm loc}(\R^2)\rightarrow L^p_{\rm loc}(\R^2)$ can be written as:
\begin{equation}
 L_t =\widehat L_t :=-(\haper+t\hae):D^2_Y+t\hbe\cdot\nabla_Y,
\end{equation}
where,
\begin{align}
 \haper(Y)&=DP(x)\aper(x)DP(x)^T,\label{haperdef}\\
 \hae(Y)&=DP(x)\arme(x)DP(x)^T,\label{haedef}\\
  \hbe_{\alpha}(Y)&=-\arme(x):D^2P_\alpha(x)+\brme(x)\cdot\nabla P_\alpha(x),\quad\alpha=1,\,2.
 \label{hbedef}
\end{align}
The transformed periodic drift is zero, and $\hae_{\alpha\beta},\,\hbe_\alpha\in L^M(\R^2)\cap L^\infty(\R^2)$.
\end{theorem}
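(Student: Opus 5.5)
We construct the periodic correctors first, then prove the bi-Lipschitz diffeomorphism property, and finally do the change-of-variables computation.

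\emph{Correctors.} For $\alpha=1,2$ we look for a periodic $\chi_\alpha$ with
\[
 L_{\per}\chi_\alpha=-L_{\per}x_\alpha=-b^{\per}_\alpha \quad\text{on }\T^2 .
\]
By the Fredholm alternative for $L_{\per}$ on $\T^2$, this is solvable exactly when the right-hand side is orthogonal to the kernel of the adjoint. By Lemma~\ref{lem:per-density}, that kernel is spanned by $\mper$. The solvability condition is therefore $\avg{\mper b^{\per}_\alpha}=0$, which is Assumption~\ref{ass:centering}.

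Periodic Schauder theory for non-divergence operators with $C^{0,\tau}$ coefficients then gives $\chi_\alpha\in C^{2,\tau'}_{\per}$, unique up to constants. Consequently $DP=I+D\chi$ and $D^2P=D^2\chi$ are bounded and periodic.

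\emph{Diffeomorphism (main obstacle).} We must show that $P$ is a global, uniformly bi-Lipschitz diffeomorphism. Our approach is as follows.

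First, write $L_{\per}$ in divergence form. Multiplying by $\mper$ and using the periodic skew potential gives
\[
 \mper L_{\per}=-\diverg\bigl((\mper\aper-B_{\per})\nabla\,\cdot\bigr),
\]
where $\diverg B_{\per}=\mper\bper+\diverg(\mper\aper)$. This $B_{\per}$ exists on $\T^2$ because the right-hand side is divergence-free with mean $\avg{\mper\bper}=0$. Thus $P_\alpha$ are periodic $\sigma$-harmonic coordinates with $\sigma=\mper\aper-B_{\per}$.

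Second, since $P-x$ is periodic, $P$ has degree one at infinity. We then invoke the Alessandrini--Nesi planar theorem \cite{AlessandriniNesi1,AlessandriniNesi2}: periodic $\sigma$-harmonic maps of the form $x+\text{periodic}$ satisfy $\det DP>0$ everywhere. It applies here because $\sigma$ is H\"older and elliptic, and the non-symmetric case is covered by their Lewy-type argument for $\sigma$-harmonic maps.

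Third, continuity and periodicity upgrade this to $\det DP\ge c>0$ uniformly, and $|DP|$ is bounded. Hence $DP^{-1}$ is bounded, so $P$ is a local diffeomorphism. Moreover $|P(x)-x|$ is bounded, so $P$ is proper. A proper local diffeomorphism of $\R^2$ is a covering map, hence a global diffeomorphism.

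Finally, the bound on $DP^{-1}$ gives the Lipschitz estimate for $\Theta=P^{-1}$, and $\Theta\in C^{2,\tau'}_{\loc}$ by the inverse function theorem. The key risk is confirming that Alessandrini--Nesi covers this setting; the fallback is to apply their result directly to the non-divergence operator $L_{\per}$ (symmetric $\aper$) on the torus.

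\emph{Chain rule.} For $u\in W^{2,p}_{\loc}$, set $\hat u=u\circ\Theta$, so that $u=\hat u\circ P$. The chain rule gives
\[
 \partial_i u=\partial_\alpha\hat u\,\partial_iP_\alpha,\qquad
 \partial_{ij}u=\partial_{\alpha\beta}\hat u\,\partial_iP_\alpha\partial_jP_\beta+\partial_\alpha\hat u\,\partial_{ij}P_\alpha .
\]
These hold in $W^{2,p}_{\loc}$ since $P\in C^{2}$ is bi-Lipschitz. Substituting,
\[
 L_tu=-(DP\,a_t\,DP^T):D^2_Y\hat u+\bigl(-a_t:D^2P_\alpha+b_t\cdot\nabla P_\alpha\bigr)\partial_\alpha\hat u .
\]
Splitting $a_t=\aper+t\arme$ and $b_t=\bper+t\brme$, the periodic part of the drift is $L_{\per}P_\alpha=0$. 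This leaves exactly \eqref{haperdef}--\eqref{hbedef}.

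\emph{Integrability.} Since $DP$ and $D^2P$ are bounded, $|\hae|+|\hbe|\lesssim(|\arme|+|\brme|)\circ\Theta$. These functions are bounded, and change of variables with bounded Jacobian gives $\hae\in L^r$ and $\hbe\in L^r+L^s$.

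To reach $L^M$, note that $\arme\in L^r\cap L^\infty\subset L^M$ by interpolation, since $r\le M<\infty$. Likewise $\brme\in L^M$, since $s\le M$. Hence $\hae_{\alpha\beta},\hbe_\alpha\in L^M(\R^2)\cap L^\infty(\R^2)$.
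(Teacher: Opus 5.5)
Your proposal is correct and follows the paper's proof. The shared steps are: Fredholm solvability of the cell problem under $\avg{\mper\bper}=0$, Schauder regularity, rewriting in divergence form with $\sigma=\Adiv_{\per}$ so that Alessandrini--Nesi applies, the chain-rule computation in which the periodic drift is $L_{\per}P_\alpha=0$, and the interpolation $L^r\cap L^\infty\subset L^M$. The only variation is that you obtain global invertibility from properness and the covering-space argument, while the paper takes the global homeomorphism directly from the periodic Alessandrini--Nesi theorem. That homeomorphism is also the univalence input their Lewy-type nonvanishing result needs, so your covering step is a harmless redundancy.
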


\begin{theorem}[Finite-energy Liouville theorem]\label{thm:liouville}
Fix $t\in[0,1]$ and let $p>2$. If
\[
 L_tu=0\quad\text{in }\R^2,
 \qquad u\in W^{2,p}_{\loc}(\R^2),
 \qquad \nabla u\in (L^p(\R^2))^{2},\,D^2u\in (L^p(\R^2))^{2\times 2},
\]
then $u$ is constant.
\end{theorem}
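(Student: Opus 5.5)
We pass to harmonic coordinates with Theorem~\ref{thm:gauge}. Set $\widehat u(Y)=u(\Theta(Y))$. Since $P$ is uniformly bi-Lipschitz with $DP,D^2P$ bounded, $\widehat u\in W^{2,p}_{\loc}$ and $\nabla_Y\widehat u,\,D^2_Y\widehat u\in L^p(\R^2)$. The equation becomes
\[
 -(\haper+t\hae):D^2_Y\widehat u+t\,\hbe\cdot\nabla_Y\widehat u=0,
\]
with $\hbe\in L^M\cap L^\infty$ and no periodic drift. The plan is a blow-down/compactness argument combined with scale invariance of the hypothesis $\nabla\widehat u\in L^p$, $p>2$.

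\textbf{Step 1: rescaling.} For $R\to\infty$, subtract the average of $\widehat u$ on $B_{2R}\setminus B_R$ and set
\[
 v_R(z)=\bigl(\widehat u(Rz)-c_R\bigr)/\bigl(R^{1-2/p}\|\nabla\widehat u\|_{L^p}\bigr).
\]
Then $\|\nabla v_R\|_{L^p(\R^2)}=1$ and
\[
 -\widehat a_R:D^2v_R+b_R\cdot\nabla v_R=0,\qquad \widehat a_R(z)=(\haper+t\hae)(Rz),\qquad b_R(z)=tR\,\hbe(Rz).
\]
On a fixed annulus or ball $K$, Hölder's inequality gives
\[
 \|b_R\|_{L^2(K)}=t\,\|\hbe\|_{L^2(RK)}\le C\,\|\hbe\|_{L^M(RK)}^{M/2}\|\hbe\|_{L^\infty}^{1-M/2}.
\]
For annuli $RK$ leaving every compact set this tends to $0$. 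On balls containing the origin it stays bounded, but we only need smallness away from a neighbourhood of $0$. The coefficients $\widehat a_R$ stay uniformly elliptic and bounded. They are only measurable in the limit, but the equation is two-dimensional, so strong $W^{2,2}$ theory holds by Bernstein/Talenti–type estimates for measurable coefficients in the plane.

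\textbf{Step 2: compactness.} Mooney's critical-drift $W^{2,2}$ estimates apply to strong solutions with drift small in $L^2$. They give uniform interior $W^{2,2}$ bounds, hence local $C^{0,\alpha}$ bounds via Sobolev, for $v_R$ on annuli away from the origin. These bounds are controlled by $\|\nabla v_R\|_{L^p}$.

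\textbf{Step 3: direct energy argument.} Rather than identifying a limit equation, we aim for decay. Since $\nabla\widehat u\in L^p(\R^2)$, the quantity $\|\nabla\widehat u\|_{L^p(B_{2R}\setminus B_R)}$ tends to $0$. Write $\omega(R)=\operatorname{osc}_{B_{2R}\setminus B_R}\widehat u$. Sobolev–Morrey on annuli gives
\[
 \omega(R)\le C R^{1-2/p}\|\nabla\widehat u\|_{L^p(B_{4R}\setminus B_{R/2})}=o(R^{1-2/p}).
\]
Next, apply the maximum principle (Aleksandrov–Bakelman–Pucci with drift in $L^2$, which is critical in 2D) on $B_{2R}$. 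This requires smallness of the drift on the whole ball. We handle this by splitting: on a fixed ball $B_{R_0}$ we use local boundedness, and outside it the drift is small. The maximum principle then gives $\operatorname{osc}_{B_{R}}\widehat u\le C\,\omega(R)$.

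Finally, the interior $W^{2,2}$/Lipschitz estimate from Step 2 on $B_R$ gives
\[
 |\nabla\widehat u(0)|\lesssim \operatorname{osc}_{B_R}\widehat u/R=o(R^{-2/p}).
\]
Hence $\nabla\widehat u\equiv0$ after translating the base point. Alternatively, the $W^{1,p}$ energy over $B_R$ is bounded by $R^{-1}\cdot$oscillation, which combined with a Caccioppoli-type inequality forces $\nabla\widehat u=0$. So $u$ is constant.

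The main obstacle is Step 3's maximum principle and gradient estimate near the origin, where $b_R$ is not small. The remedy is translation invariance of the argument. We apply the rescaled estimates centered at an arbitrary point $Y_0$ and at scales $R$ with $B_R(Y_0)$ far from the support-bulk of $\hbe$. Where this is not possible, we absorb via ABP with drift in $L^2$: ABP holds in 2D with constant depending on $\|b\|_{L^2}$, which is finite globally since $\hbe\in L^M\cap L^\infty\subset L^2$. Gradient bounds on bounded balls come from standard $W^{2,p}$ theory with bounded drift.
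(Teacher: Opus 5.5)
There is a genuine gap, in the last step of your argument. Your reduction to harmonic coordinates and your Step~3 bound $\operatorname{osc}_{B_R}\widehat u=o(R^{1-2/p})$ are fine. The latter needs no smallness of the drift: with zero right-hand side and bounded drift, the weak maximum principle gives $\operatorname{osc}_{B_R}\widehat u=\operatorname{osc}_{\partial B_R}\widehat u$.

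Everything then rests on a scale-invariant gradient bound $|\nabla\widehat u(Y_0)|\lesssim R^{-1}\operatorname{osc}_{B_R(Y_0)}\widehat u$ for all large $R$, or its Caccioppoli-type variant $\|\nabla\widehat u\|_{L^p(B_R)}\lesssim R^{2/p-1}\operatorname{osc}_{B_{2R}}\widehat u$. Nothing you cite gives this:
\begin{itemize}
\item Mooney's results are Harnack/H\"older and ABP estimates, not $W^{2,2}$ or Lipschitz bounds.
\item In the plane $W^{2,2}\not\subset W^{1,\infty}$, and with an $L^2$ drift a $W^{2,2}$ estimate does not even close.
\item Critical $L^2$ drift gives no gradient control at all. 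For example, $u=x_1\log\log(e/|x|)$ solves $-\Delta u+b\cdot\nabla u=0$ near $0$ with $b=|\nabla u|^{-2}\Delta u\,\nabla u\in L^2$ and $u\in W^{2,2}$, yet $\nabla u$ is unbounded.
\end{itemize}

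In your setting, for fixed $Y_0$ and $R\to\infty$, the ball $B_R(Y_0)$ always contains the defect core. In rescaled variables $\|b_R\|_{L^2(B_1)}=t\|\hbe\|_{L^2(B_R)}$ stays of order one while concentrating on a ball of radius $O(R^{-1})$. Your proposed remedies do not address this:
\begin{itemize}
\item Translating the base point does not avoid the core at large scales.
\item ABP controls only sup norms, not gradients.
\item The local $W^{2,p}$ estimate with bounded drift has constants depending on $R\|\hbe\|_{L^\infty}$ after rescaling.
\end{itemize}
If $\hbe=0$ your argument would close, since Bers--Nirenberg/Talenti give a scale-invariant $C^{1,\alpha}$ bound for measurable coefficients in the plane. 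The drift is exactly what destroys scale invariance. A large-scale Lipschitz estimate across the core would need an Avellaneda--Lin type compactness iteration, which your proposal does not contain.

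The missing idea is to avoid gradient estimates entirely and argue by contradiction at the level of oscillation.
\begin{enumerate}
\item \emph{Normalization.} The paper picks good scales $R_n$ (Lemma~\ref{lem:goodscales}) with $\operatorname{osc}_{B_{SR_n}}u\le 2S^\beta\operatorname{osc}_{B_{R_n}}u$ for some $\beta\in(1-2/p,1)$. It then normalizes by $\operatorname{osc}_{B_{R_n}}u$, so that $\operatorname{osc}_{B_1}v_n=1$. Your normalization by $R^{1-2/p}\|\nabla\widehat u\|_{L^p}$ instead sends the oscillation on $B_1$ to zero, so no contradiction could come from a blow-down limit.
\item \emph{Limit on annuli.} On annuli the rescaled drift is small in $L^2$, so Mooney's H\"older estimate gives compactness. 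The perturbed test function argument of Lemma~\ref{lem:smalldrift}, using the correctors of $\haper$, identifies the limit as a viscosity solution of a constant-coefficient homogenized equation.
\item \emph{Constancy of the limit.} A removable singularity at the origin and the constant-coefficient Liouville theorem with growth $S^\beta$, $\beta<1$, force the limit to be a constant $c$.
\item \emph{Crossing the core.} Only the zero-right-hand-side maximum principle on $\Phi_n(B_\delta)$ is used, and it needs no smallness: $\sup_{\Phi_n(B_\delta)}|w_n-c|\le\sup_{\partial\Phi_n(B_\delta)}|w_n-c|\to0$. Hence $v_n\to c$ uniformly on $B_1$, contradicting $\operatorname{osc}_{B_1}v_n=1$.
\end{enumerate}
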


\begin{definition}
    For $f\in L^q(\R^2)\cap L^p(\R^2)$ and $1\leq q< p\leq \infty$, let us define the norm $\|f\|_{(L^q\cap L^p)(\R^2)}$ as
    \begin{equation}
        \|f\|_{(L^q\cap L^p)(\R^2)} = \|f\|_{L^q(\R^2)}+ \|f\|_{ L^p(\R^2)}.
    \end{equation}
\end{definition}

\begin{theorem}[Central estimate]\label{thm:central}
Let $1<q<2$ and define $q^*$ by
\[
 \frac1{q^*}=\frac1q-\frac12.
\]
For every $t\in[0,1]$ and every $f\in L^q(\R^2)\cap L^{q^*}(\R^2)$, the equation
\[
 L_tu=f\quad\text{in }\R^2
\]
has a solution, unique modulo constants, such that $\nabla u\in (L^{q^*}(\R^2))^{2},\,D^2u\in (L^{q^*}(\R^2))^{2\times 2}$ and
\begin{equation}
 \|\nabla u\|_{(L^{q^*}(\R^2))^2}+\|D^2u\|_{(L^{q^*}(\R^2))^{2\times 2}}
 \leq C_q\bigl\|f\|_{(L^q\cap L^{q^*})(\R^2)}.
 \label{centralestimate}
\end{equation}
\end{theorem}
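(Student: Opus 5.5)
We prove the central estimate by continuation in \(t\in[0,1]\), working throughout in the harmonic coordinates of Theorem~\ref{thm:gauge}. Since \(P\) is a uniformly bi-Lipschitz \(C^{2,\tau'}_{\loc}\) diffeomorphism with \(DP,D^2P\) bounded, the norms \(\|\nabla u\|_{L^{q^*}}+\|D^2u\|_{L^{q^*}}\) and \(\|f\|_{L^q\cap L^{q^*}}\) are comparable in the \(x\)- and \(Y\)-variables. This holds because \(D_x^2u\) is \(D_Y^2\widehat u\) conjugated by \(DP\) plus \(\nabla_Y\widehat u\) contracted with \(D^2P\), and the reverse formula holds likewise. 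It therefore suffices to treat \(\widehat L_t=-(\haper+t\hae):D^2_Y+t\hbe\cdot\nabla_Y\).

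Let \(T\subset[0,1]\) be the set of \(t\) for which the a priori estimate \eqref{centralestimate} and solvability hold, with a constant \(C_q\) independent of \(t\).

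\textbf{Step 1: \(t=0\).} The operator \(\widehat L_0=-\haper:D^2_Y\) has periodic Hölder coefficients and no drift. Its dual periodic density \(\widehat m^{\per}\) turns \(\widehat m^{\per}\widehat L_0\) into a divergence-form operator. Standard periodic theory then gives a Calderón–Zygmund estimate \(\|D^2u\|_{L^{q^*}}\lesssim\|f\|_{L^{q^*}}\). It also gives the Sobolev-type bound \(\|\nabla u\|_{L^{q^*}}\lesssim\|f\|_{L^q}\), obtained from uniform \(W^{1,q}\to L^{q^*}\) bounds for the Green function gradient \(|\nabla G(Y,Z)|\lesssim|Y-Z|^{-1}\). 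This is the homogenized-Laplacian-type behavior and uses \(1<q<2\). Hence \(0\in T\).

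\textbf{Step 2: openness.} We write \(L_{t'}=L_t+(t'-t)(L_1-L_0)\). The perturbation \(-\arme:D^2+\brme\cdot\nabla\) is bounded from the solution space into \(L^q\cap L^{q^*}\) by Hölder's inequality. For example, \(\|\brme\cdot\nabla u\|_{L^q}\le\|\brme\|_{L^{2}}\|\nabla u\|_{L^{q^*}}\), and \(\brme\in L^s\cap L^\infty\subset L^2\). The same bound holds for \(\arme\). A Neumann series then gives openness.

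\textbf{Step 3: closedness, the main step.} We need a uniform bound on \(T\), which we prove by contradiction and blow-down. Suppose \(t_k\to t\) and \(u_k\) satisfy \(\|\nabla u_k\|_{L^{q^*}}+\|D^2u_k\|_{L^{q^*}}=1\) while \(\|f_k\|\to0\). Local \(W^{2,q^*}\) estimates together with the \(t=0\) estimate applied to a cut-off remainder show that mass cannot escape only into the defect region at unit scale. If the mass stays at bounded scales and locations, a compactness limit gives a nonzero finite-energy solution of \(L_tu=0\). This contradicts Theorem~\ref{thm:liouville}, since \(q^*>2\).

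If the mass escapes to infinity, periodicity and the decay of the defects yield a periodic limit solving \(\widehat L_0u=0\) with finite energy, again a contradiction. If the mass concentrates at large scales \(R_k\to\infty\), we rescale \(v_k(Y)=u_k(R_kY)/R_k\). The drift becomes \(R_k\hbe(R_k\cdot)\), whose \(L^2\) norm on fixed annuli is \(\|\hbe\|_{L^2(R_k\,\text{annulus})}\to0\). The diffusion homogenizes to a constant matrix. We control the critical drift with Mooney's estimates, applied to the strong \(W^{2,2}\) solutions, to pass to a limit solving a constant-coefficient equation with finite \(L^{q^*}\) gradient energy, which is a contradiction.

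Once the uniform bound is established, existence at the limit \(t\) follows by approximation. Uniqueness modulo constants follows from Theorem~\ref{thm:liouville}.

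I expect the hardest part to be the scale-escape case: showing that concentration profiles of \(u_k\) at large scales have nondegenerate limits despite the non-scale-invariant \(L^q\) part of the norm. There I would first try a profile decomposition that tracks the \(\|f\|_{L^q}\) term via the Green function bound.
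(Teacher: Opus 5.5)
\textbf{There is a genuine gap, in Step 3.} Your Steps 1 and 2 essentially match the paper's. One caveat on Step 1: the divergence-form structure coming from \(\widehat m^{\per}\) gives only first-derivative control. The paper instead uses the Avellaneda--Lin whole-space non-divergence estimate to get \(\|D_Y^2w\|_{L^q\cap L^{q^*}}\lesssim\|f\|_{L^q\cap L^{q^*}}\), and then the homogeneous Sobolev inequality \(\|\nabla_Yw\|_{L^{q^*}}\lesssim\|D_Y^2w\|_{L^q}\).

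In Step 3 your large-scale case is left open, as you acknowledge, and as set up it cannot close. With \(v_k(Y)=u_k(R_kY)/R_k\) you get \(\|\nabla v_k\|_{L^{q^*}}=R_k^{-2/q^*}\|\nabla u_k\|_{L^{q^*}}\to0\), while \(\|D^2v_k\|_{L^{q^*}}=R_k^{1-2/q^*}\|D^2u_k\|_{L^{q^*}}\) may blow up. So the normalized solution norm yields no nondegenerate finite-energy limit. You would also have to handle the defect core collapsing to the origin. Moreover, your three scenarios (bounded location, translation to infinity, a single large scale) do not exhaust the ways \(u_k\) can lose mass on fixed balls; it can spread over many unit regions or over a range of scales. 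Finally, in your bounded case, weak convergence of \(D^2u_k\) does not preserve a local lower bound. You need an upgrade to strong convergence, which the paper gets from the interior \(W^{2,q^*}\) estimate (Gilbarg--Trudinger, Theorem 9.11) applied to \(u_k-u\), together with Rellich compactness.

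\textbf{The missing idea} is that you never need to locate where the mass goes. A two-case dichotomy relative to balls \(B_R\) centred at the origin suffices.

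\emph{Vanishing case.} Suppose \(\|\nabla u_n\|_{L^{q^*}(B_R)}+\|D^2u_n\|_{L^{q^*}(B_R)}\to0\) for every \(R\). Then \(\arme:D^2u_n-\brme\cdot\nabla u_n\to0\) in \(L^q\cap L^{q^*}\).
\begin{itemize}
\item On \(B_R\) this follows from local vanishing.
\item On \(B_R^c\), for the \(L^q\) part use H\"older with \(\|\arme\|_{L^2(B_R^c)}\) and \(\|\brme\|_{L^2(B_R^c)}\). For the \(L^{q^*}\) part use \(\sup_{B_R^c}(|\arme|+|\brme|)\to0\), which holds because uniformly H\"older \(L^M\) functions vanish at infinity. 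Both bounds are uniform in \(n\) by the normalization.
\end{itemize}
Hence \(L_{\per}u_n\to0\) in \(L^q\cap L^{q^*}\). The Step 1 estimate then forces the normalized norm to zero, a contradiction. That estimate is global, so it already covers escape by translation and spreading to large scales.

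\emph{Non-vanishing case.} Otherwise there is non-vanishing on a fixed \(B_R\). Local compactness plus the strong-convergence upgrade above gives a nonconstant solution of \(L_tu=0\) with \(\nabla u,D^2u\in L^{q^*}\). This contradicts Theorem~\ref{thm:liouville}.

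The critical-drift blow-down with Mooney's estimates belongs only inside the proof of Theorem~\ref{thm:liouville}. There a single fixed solution is blown down at good scales and normalized by its oscillation, which scales consistently. It should not appear in the closedness step.
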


\begin{theorem}[Correctors]\label{thm:correctors}
For every $\xi\in\R^2$ there exists a corrector
\[
 w_\xi=w_{\xi}^{\per}+w^{\e}_\xi
\]
solving
\[
 Lw_\xi=-b\cdot \xi\quad\text{in }\R^2.
\]
Moreover
\[
 \nabla w^{\e}_\xi\in(L^{M^*}(\R^2))^2,\,\quad\ D^2w^{\e}_\xi\in (L^{M^*}(\R^2))^{2\times 2},
 \qquad
 \frac{w^{\e}_\xi(x)}{1+|x|}\xrightarrow{|x|\to\infty}0.
\]
\end{theorem}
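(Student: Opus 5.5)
We take \(w^{\per}_\xi\) to be the periodic corrector from Theorem~\ref{thm:gauge}, and obtain the defect part \(w^{\e}_\xi\) from the central estimate Theorem~\ref{thm:central} applied to \(L=L_1\).

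\emph{Periodic part.} Set \(w^{\per}_\xi=\chi\cdot\xi=\sum_\alpha \xi_\alpha\chi_\alpha\in C^{2,\tau'}_{\per}\). Since \(L_{\per}(x_\alpha+\chi_\alpha)=0\), we have \(L_{\per}(\chi\cdot\xi)=-\bper\cdot\xi\). With \(L=L_{\per}+L^{\e}\), where \(L^{\e}=-\arme:D^2+\brme\cdot\nabla\), the equation \(Lw_\xi=-b\cdot\xi\) becomes
\[
 Lw^{\e}_\xi=f_\xi:=-\brme\cdot\xi-L^{\e}(\chi\cdot\xi)
 =\arme:D^2(\chi\cdot\xi)-\brme\cdot(\xi+\nabla(\chi\cdot\xi)).
\]
Equivalently, \(f_\xi=-\sum_\alpha\xi_\alpha\,\hbe_\alpha\circ P\), in agreement with \eqref{hbedef}.

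\emph{Integrability of the source.} Because \(D\chi\) and \(D^2\chi\) are bounded, \(|f_\xi|\le C|\xi|(|\arme|+|\brme|)\). Hence \(f_\xi\in L^\infty\cap L^r+L^\infty\cap L^s\subset L^M\cap L^\infty\): each of \(\arme\) and \(\brme\) lies in \(L^{\min}\cap L^\infty\), so by interpolation it lies in \(L^M\). In particular \(f_\xi\in L^M\cap L^{M^*}\), since \(M<M^*<\infty\). We apply Theorem~\ref{thm:central} with \(q=M\in(1,2)\), so that \(q^*=M^*\). This gives \(w^{\e}_\xi\in W^{2,M^*}_{\loc}\) with \(\nabla w^{\e}_\xi\) and \(D^2w^{\e}_\xi\) in \(L^{M^*}(\R^2)\), bounded by \(C|\xi|\). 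Linearity in \(\xi\) follows from uniqueness modulo constants, after fixing the constant, for instance by solving for \(\xi=e_1,e_2\) and combining.

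\emph{Sublinear growth.} This is the only step needing care, because \(\nabla w^{\e}_\xi\in L^{M^*}\) with \(M^*>2\) does not by itself give boundedness. Both \(\nabla w\) and \(D^2w\) lie in \(L^{M^*}\), so \(\nabla w\in W^{1,M^*}(\R^2)\). Since \(M^*>2\), Morrey's embedding gives \(\nabla w\in C^{0,1-2/M^*}\cap L^\infty\). Moreover \(\nabla w\to0\) at infinity: a uniformly continuous \(L^{M^*}\) function must vanish at infinity. Alternatively, one can use the local Morrey bound \(\|\nabla w\|_{L^\infty(B_1(x))}\le C\|\nabla w\|_{W^{1,M^*}(B_1(x))}\to0\). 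We then write
\[
 w(x)-w(0)=\int_0^1\nabla w(sx)\cdot x\,ds.
\]
Splitting the integral into \(s|x|\le R\) and \(s|x|>R\), the first piece is bounded by \(R\|\nabla w\|_\infty\) and the second by \(|x|\sup_{|y|>R}|\nabla w(y)|\). Dividing by \(1+|x|\), letting \(|x|\to\infty\) and then \(R\to\infty\) gives \(w^{\e}_\xi(x)/(1+|x|)\to0\).

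I expect no real obstacle: the substantive input is Theorem~\ref{thm:central}. The points to check are that the source lies in \(L^q\cap L^{q^*}\) with \(q=M\), and the Morrey argument giving decay of \(\nabla w^{\e}_\xi\). If one prefers to use harmonic coordinates, the same bounds transfer through the bi-Lipschitz map \(P\).
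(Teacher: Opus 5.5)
Your proposal is correct and follows the paper's proof. Your \(\chi\cdot\xi\) coincides with the paper's zero-mean solution of \(L_{\per}w^{\per}_\xi=-\bper\cdot\xi\), the source \(f_\xi\in L^M\cap L^\infty\subset L^M\cap L^{M^*}\) is the same, and Theorem~\ref{thm:central} is applied with \(q=M\). The only difference is the sublinearity step, which is easier than you suggest: the paper uses Morrey's bound \(\osc_{B_R}w^{\e}_\xi\le CR^{1-2/M^*}\|\nabla w^{\e}_\xi\|_{L^{M^*}}\), which needs only \(\nabla w^{\e}_\xi\in L^{M^*}\) with \(M^*>2\) and gives the rate \(|w^{\e}_\xi(x)|\le C(1+|x|)^{1-2/M^*}\), whereas your argument through \(D^2w^{\e}_\xi\in L^{M^*}\) and the decay of \(\nabla w^{\e}_\xi\) is valid but yields only \(o(|x|)\).
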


\begin{theorem}[Invariant measure]\label{thm:inv}
There exists a positive invariant measure
\[
 m=\mper+m^{\e}
\]
satisfying
\begin{equation}
 -\partial_i\bigl(\partial_j(a_{ij}m)+b_i m\bigr)=0
 \quad\text{in }\R^2.
 \label{adjointfull}
\end{equation}
Furthermore
\[
 m^{\e}\in L^{M^*}(\R^2)\cap L^\infty(\R^2)\cap C^{0,\theta}_{\unif}(\R^2)
 \quad\text{for some }\theta\in(0,1),
 \qquad \lim_{R\to\infty}\sup_{|x|\ge R}|m^{\e}(x)|=0,
 \qquad \inf_{\R^2}m>0.
\]
The uniqueness in this decaying class is proved in Corollary~\ref{cor:unique}.
\end{theorem}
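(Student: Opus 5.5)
The plan is to construct \(m^{\e}\) by duality from the central estimate, Theorem~\ref{thm:central}, applied to the formal adjoint. Set \(L^*m=-\partial_i(\partial_j(a_{ij}m)+b_im)\). Since \(L_{\per}^*\mper=0\), the unknown \(m^{\e}\) must satisfy
\[
 L^*m^{\e}=\partial_i\bigl(\partial_j(\arme_{ij}\mper)+\brme_i\mper\bigr)=:G,
\]
a source in divergence form \(G=\partial_i g_i\) with \(g_i=\partial_j(\arme_{ij}\mper)+\brme_i\mper\). The term \(\partial_j(\arme_{ij}\mper)\) is only a distributional derivative of an \(L^{M}\cap L^\infty\) field, so \(G\) is a second-order divergence of \(\arme\mper\) plus a first-order divergence of \(\brme\mper\).

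The weak formulation I aim for is
\[
 \int m^{\e}\,L\varphi
 =\int \mper\bigl(\arme:D^2\varphi-\brme\cdot\nabla\varphi\bigr)
 =:\ell(\varphi).
\]
Given \(f\in L^q\cap L^{q^*}\), Theorem~\ref{thm:central} (at \(t=1\)) provides \(u\) with \(Lu=f\), and I define
\[
 T(f):=\ell(u).
\]
This is well defined because \(\ell\) annihilates constants. Choose \(q\) with \(1/q+1/M^*=1\), i.e.\ \(q=(M^*)'\). Then \(q^*\) satisfies \(1/q^*=1/q-1/2=1/M'\), so \(q^*=M'\).

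Hölder's inequality gives
\[
 |T(f)|\le \|\mper\|_\infty\bigl(\|\arme\|_{L^M}+\|\brme\|_{L^M}\bigr)\bigl(\|D^2u\|_{L^{M'}}+\|\nabla u\|_{L^{M'}}\bigr)\le C\|f\|_{L^q\cap L^{q^*}}.
\]
Here \(\brme\in L^s\cap L^\infty\subset L^M\) because \(s\le M\), and similarly for \(\arme\). Since \(M<2\), we have \(q=(M^*)'\in(1,2)\), so the exponents are admissible. By Hahn--Banach and the identification \((L^q\cap L^{q^*})^*=L^{q'}+L^{(q^*)'}\), we obtain
\[
 m^{\e}\in L^{M^*}+L^{M}
\]
with \(\int m^{\e}f=T(f)\). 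For \(f=L\varphi\) with \(\varphi\in C_c^\infty\), uniqueness modulo constants in Theorem~\ref{thm:central} gives \(u=\varphi+c\), so \(\int m^{\e}L\varphi=\ell(\varphi)\). Hence \(m=\mper+m^{\e}\) solves \eqref{adjointfull} distributionally.

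To upgrade \(m^{\e}\) from \(L^{M^*}+L^M\) to \(L^{M^*}\), I would rerun the duality with the pair \((q,q^*)\) adjusted. Alternatively, I would split the defects into a part near the origin and a small far part, as in the blow-down argument. The main tool for boundedness, Hölder continuity and decay is the local theory of Bogachev--Shaposhnikov applied to \(m\), a locally integrable solution of a double-divergence equation with Hölder \(a\) and bounded \(b\). This yields \(m\in C^{0,\theta}\) on unit balls, with bounds depending only on \(\|m\|_{L^1(B_2(x_0))}\), which is uniform in \(x_0\) since \(m^{\e}\in L^{M^*}+L^M\). This gives \(L^\infty\cap C^{0,\theta}_{\unif}\). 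The difference \(m^{\e}\) then inherits uniform Hölder bounds. An interpolation of the form
\[
 |m^{\e}(x_0)|\lesssim \|m^{\e}\|_{L^1(B_1(x_0))}^{\kappa}
\]
(from the Hölder bound), together with integrability, gives \(\sup_{|x|\ge R}|m^{\e}|\to0\). Boundedness together with \(L^{M^*}+L^M\) then yields \(m^{\e}\in L^{M^*}\), since the \(L^M\) piece is locally bounded and lies in \(L^M\cap L^\infty\subset L^{M^*}\).

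Positivity is the hardest step. The plan is to first show \(m\ge0\) by the homotopy in \(t\). Construct \(m_t\) for every \(t\) by the same duality, with uniform bounds from the \(t\)-uniform constant \(C_q\). Continuity in \(t\) follows from the duality formula. I then argue that \(\{t: m_t>0\}\) is open and closed, using Bauman's adjoint maximum principle / Harnack inequality for nonnegative adjoint solutions. For openness, \(\inf m_t\ge c>0\) at infinity is automatic from the decay of \(m^{\e}_t\) and \(\inf\mper>0\), and on compacts uniform Hölder convergence preserves positivity. For closedness, a limit \(m_{t_0}\ge0\) which is \(\ge\inf\mper/2\) near infinity cannot vanish at any point by Bauman's Harnack inequality. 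Hence \(\inf m_{t_0}>0\). Starting from \(m_0=\mper\) gives the result at \(t=1\), so \(\inf_{\R^2}m>0\).
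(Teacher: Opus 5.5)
Your proposal is correct. Up to positivity it follows the paper: the same functional with \(q=(M^*)'\), \(q^*=M'\), the representation \(m^{\e}\in L^{M^*}+L^M\), the test \(f=L\varphi\) with uniqueness modulo constants, and the Bogachev--Shaposhnikov bounds giving \(L^\infty\cap C^{0,\theta}_{\unif}\) and decay. One small fix: in a splitting \(m^{\e}=g+h\), the \(L^M\) piece \(h\) need not be bounded. As in the paper, use \(|m^{\e}|\le2|g|\) on \(\{|m^{\e}|\le2|g|\}\), and \(|m^{\e}|^{M^*}\le C\|m^{\e}\|_\infty^{M^*-M}|h|^M\) on the complement.

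Positivity is where you genuinely differ. The paper works only at \(t=1\). Decay gives \(m\ge c_{\per}/2\) outside a large ball \(B_R\). Bauman's maximum principle for normalized adjoint solutions, applied to \(-m/g\) with a Green weight \(g\) whose pole lies outside \(\overline{B_R}\), then gives \(m\ge0\) in \(B_R\), and strong positivity finishes.

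Your continuity argument in \(t\) never needs a maximum principle for adjoint solutions. That is the delicate tool here, since such solutions obey none without the Green-function normalization. You use only strong positivity of nonnegative solutions, at the price of handling the whole family \(m_t\).

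Openness needs decay of \(m^{\e}_t\) uniform in \(t\) near \(t_0\), not merely decay for each fixed \(t\). This is not automatic, but it follows from your ingredients:
\(L_t(u_{f,t}-u_{f,s})=(s-t)(-\arme:D^2+\brme\cdot\nabla)u_{f,s}\). Together with the \(t\)-uniform constant of Theorem~\ref{thm:central}, this gives \(\|m^{\e}_t-m^{\e}_s\|_{L^{M^*}+L^M}\le C|t-s|\). The local estimates have constants uniform in \(t\), so \(m_t-m_{t_0}\) has \(t\)-uniform H\"older bounds. Your interpolation inequality then upgrades the bound to \(\|m_t-m_{t_0}\|_{L^\infty}\to0\).

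With this, the argument closes:
\(\{t:m_t\ge0\}\) is closed; it coincides with \(\{t:\inf m_t>0\}\) by the strong-positivity step, so it is open; and it contains \(t=0\) because \(m_0=\mper\).

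As a by-product you also get continuity of \(t\mapsto m_t\) in \(L^\infty\), which the paper's one-shot argument does not provide. The paper's route, in exchange, is shorter and needs nothing beyond the single density at \(t=1\).
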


\begin{theorem}[Divergence-form reduction]\label{thm:divred}
Let $m=\mper+m^{\e}$ be any positive invariant measure for $L$ satisfying
\[
 m^{\e}\in L^{M^*}(\R^2)\cap L^\infty(\R^2),
 \qquad \inf_{\R^2}m>0.
\]
There exists a skew-symmetric matrix
\[
 B=B^{\per}+B^{\e},\qquad B^T=-B,
\]
such that
\begin{equation}
 \diverg B=m\,b+\diverg(m\,a)
 \label{skewsource}
\end{equation}
and, for some \(\beta\in(0,1)\),
\[
 B^{\e}\in (L^{M^*}(\R^2))^{2\times 2}\cap (L^\infty(\R^2))^{2\times 2}
 \cap (C^{0,\beta}_{\unif}(\R^2))^{2\times 2}
\]
Consequently, with $A=m\,a-B$, one has
\begin{equation}
 mLu=-\diverg(A\nabla u)
 \label{divformid}
\end{equation}
in the sense of distributions. Moreover
\[
 A=\Adiv_{\per}+A^{\e},
 \qquad A^{\e}\in (L^{M^*}(\R^2))^{2\times 2}\cap (L^\infty(\R^2))^{2\times 2}
 \cap (C^{0,\beta}_{\unif}(\R^2))^{2\times 2},
\]
and $A$ is coercive in the divergence-form sense. Moreover, \(A^{\e}(x),B^{\e}(x)\to0\)
uniformly as \(|x|\to\infty\). Here $\Adiv_{\per}=m^\per\aper-B^{\per}$.
\end{theorem}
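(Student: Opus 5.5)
Splitting the source into a periodic part and a localized defect, I will construct $B$ as $B=B^{\per}+B^{\e}$. Write $F:=m\,b+\diverg(m\,a)$ in the sense of distributions. The invariance equation \eqref{adjointfull} says exactly $\diverg F=0$. Splitting $m=\mper+m^{\e}$, $a=\aper+\arme$, $b=\bper+\brme$, we get $F=F^{\per}+F^{\e}$ with
\[
F^{\per}=\mper\bper+\diverg(\mper\aper),\qquad F^{\e}=m^{\e}\bper+m\,\brme+\diverg\bigl(m^{\e}\aper+m\,\arme\bigr).
\]
By Lemma~\ref{lem:per-density}, $\diverg F^{\per}=0$ on $\T^2$. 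By Assumption~\ref{ass:centering}, $\avg{F^{\per}}=\avg{\mper\bper}=0$, since the divergence part has zero mean. Hence $F^{\per}$ has a periodic stream function: solve $\Delta\psi^{\per}=\partial_1F^{\per}_2-\partial_2F^{\per}_1$ on $\T^2$ and set $B^{\per}=\psi^{\per}J$, where $J$ is the rotation matrix. Because $F^{\per}$ is mean-free and divergence-free, $\diverg B^{\per}=F^{\per}$. Subtracting, $\diverg F^{\e}=0$ on $\R^2$, and it remains to find a skew $B^{\e}=\psi J$ with $\nabla^\perp\psi=F^{\e}$ and $\psi$ in the stated class.

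The main obstacle is this defect stream function. The natural route is to solve $\Delta\psi=\curl F^{\e}$ with $\psi=\Delta^{-1}\curl F^{\e}$. Write $F^{\e}=H+\diverg Q$ with $H=m^{\e}\bper+m\brme$ and $Q=m^{\e}\aper+m\arme$.

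For the zero-order piece, $\psi_Q=\Delta^{-1}\curl\diverg Q$ is a Calder\'on--Zygmund operator applied to $Q\in L^{M^*}\cap L^\infty$. It gives $L^{M^*}$ bounds, and Hölder bounds from the Hölder regularity of $m$ (Theorem~\ref{thm:inv}) and of the coefficients.

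For the first-order piece, $\psi_H=\nabla\Delta^{-1}$ applied to $H$ is a Riesz potential of order one. We have $H\in L^{M}$ only through the $m\brme$ term, since $\brme\in L^s$, $s\le M$, and $m$ is bounded. The term $m^{\e}\bper$ lies only in $L^{M^*}$, and $I_1$ is not bounded from $L^{M^*}$ with $M^*>2$.

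This is precisely why the paper passes to harmonic coordinates. I would push the invariance equation forward by $P$ from Theorem~\ref{thm:gauge}. There the periodic drift vanishes, so the transformed source has the form $\widehat H+\diverg\widehat Q$ with $\widehat H$ built only from $\hbe$ and $\hae$. Using $m\in L^\infty$, this gives $\widehat H\in L^M\cap L^\gamma$ for all finite $\gamma$, while $\widehat Q$ collects $\widehat m^{\e}\widehat a_{\per}$-type terms in $L^{M^*}\cap L^\infty$. Then $I_1\widehat H\in L^{M^*}$ by Hardy--Littlewood--Sobolev, and it is Hölder and bounded by Morrey's inequality, since $\widehat H\in L^\gamma$ with $\gamma>2$. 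The Calder\'on--Zygmund part handles $\widehat Q$. This yields $\widehat\psi\in L^{M^*}\cap L^\infty\cap C^{0,\beta}_{\unif}$. One must check that $\nabla^\perp\widehat\psi$ equals the source exactly, not merely up to a constant vector; this holds because both sides decay in $L^{M^*}$ and the difference is harmonic. Decay at infinity follows from $L^{M^*}$ integrability plus uniform Hölder continuity.

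Pulling back, I would use the Piola transform. If $\widehat B$ is skew with $\diverg_Y\widehat B=\widehat F$, then
\[
B(x)=\det DP\,DP^{-1}\widehat B(P(x))DP^{-T}
\]
is skew. In two dimensions this is simply $B=\widehat\psi\circ P\,J$ for skew $2\times2$ matrices, because $\det(DP)^{-1}DP^{-1}JDP^{-T}=J$. The Piola identity gives $\diverg_xB=F$. Since $P$ is a bi-Lipschitz periodic $C^{2,\tau'}$ map, the $L^{M^*}\cap L^\infty\cap C^{0,\beta}$ bounds and the decay transfer, and the periodic/defect splitting is preserved.

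The remaining claims are algebraic. First, $\diverg(A\nabla u)=\diverg(ma\nabla u)-\diverg(B\nabla u)$. Expanding, $\diverg(ma\nabla u)=m\,a:D^2u+\diverg(ma)\cdot\nabla u$, and $\diverg(B\nabla u)=(\diverg B)\cdot\nabla u$ because $B_{ij}\partial_{ij}u=0$ by skew-symmetry. Combining with \eqref{skewsource} gives \eqref{divformid}; I would verify it first for smooth $u$ and extend by density. Second, $A^{\e}=m^{\e}\aper+m\arme-B^{\e}$ inherits the integrability, Hölder regularity and decay of its three terms; this uses Theorem~\ref{thm:inv} and the fact that $\arme\in L^r\cap L^\infty$ lies in $L^{M^*}$ after interpolation, since $r<M^*$. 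Third, coercivity is $\xi\cdot A\xi=m\,\xi\cdot a\xi\ge(\inf m)\lambda|\xi|^2$.
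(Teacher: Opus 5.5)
Your proposal is correct and follows essentially the paper's route. Harmonic coordinates remove the term \(m^{\e}\bper\), so that \(H=\widehat m\,\hbe\in L^M\cap L^\gamma\); Lemma~\ref{lem:hodge} (Hardy--Littlewood--Sobolev for \(H\), Calder\'on--Zygmund with H\"older cancellation for \(Q\), a Liouville argument for exactness) then gives \(\widehat\psi^{\e}\), and the Piola pull-back yields \(B^{\e}=\widehat\psi^{\e}(P)R\). Building \(B^{\per}\) directly in \(x\) and deriving \eqref{divformid} from \eqref{skewsource} by expansion is only a reordering.

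One small fix is needed. The theorem is stated for an arbitrary \(m\) in the class, with no decay assumed, so Corollary~\ref{cor:unique} does not identify it with the density constructed earlier. The uniform H\"older regularity of \(m\) and \(m^{\e}\) must therefore come from Lemma~\ref{lem:adjlocal} applied on unit balls, as the paper does, not from Theorem~\ref{thm:inv}.
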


\begin{remark}[Decay is not needed for Theorem~\ref{thm:divred}]
The tail condition
\[
 \lim_{R\to\infty}\sup_{|x|\ge R}|m^{\e}(x)|=0
\]
is not used in the divergence-form reduction. It is used only in the decaying uniqueness class of
Theorem~\ref{thm:inv} and Corollary~\ref{cor:unique}.
\end{remark}

\begin{remark}[Relation with the oscillatory problem]
Theorems~\ref{thm:central}--\ref{thm:divred} give the two-dimensional structural inputs used in the
advection--diffusion theory of Blanc--Le Bris--Lions. After Theorem~\ref{thm:divred}, the operator
has the invariant-measure divergence-form representative
\[
    mLu=-\diverg((ma-B)\nabla u).
\]
The boundary-value compactness argument and the local-profile interpretation are not reproved in
this paper. They are the divergence-form local-defect arguments of
\cite{BLLMilan,BLLCRAS,BLLLocalProfiles}, applied after the above reduction and after verifying the
corresponding hypotheses on the coefficient. Theorem~\ref{thm:divred} gives
\(A^{\e},B^{\e}\in L^{M^*}(\R^2)\cap L^\infty(\R^2)\cap
C^{0,\beta}_{\unif}(\R^2)\), and hence uniform decay at infinity. Any further hypotheses required
by a chosen divergence-form result must still be verified separately.
\end{remark}

\begin{remark}[Non-endpoint range]
The exponent \(q^*\) in Theorem~\ref{thm:central} is the planar analogue of the exponent appearing
in the scaling argument of Blanc--Le Bris--Lions. In the harmonic-coordinate variables the periodic
drift is removed, but the pull-back formula contains
\[
    D_x^2u=DP^T D_Y^2w\,DP+
    \sum_\alpha \partial_\alpha w\,D^2P_\alpha .
\]
Thus the term \(D^2P\,\nabla_Y w\) forces the homogeneous Sobolev exponent
\(1/q^*=1/q-1/2\). The lower endpoint is already false for the Laplacian: if
\(-\Delta u=f\) in \(\R^2\), \(f\in C_c^\infty\), and \(\int f\ne0\), then
\(|\nabla u(x)|\sim |x|^{-1}\) at infinity and \(\nabla u\notin (L^2(\R^2))^2\). The upper endpoint
\(q=2\) would require an \(L^\infty\) Calder\'on--Zygmund estimate. The restrictions
\(1<r,s<2\) keep the defect estimates away from the analogous Hodge endpoints.
\end{remark}

\section{Harmonic coordinates, Liouville theorem and central estimates}
\subsection{Periodic divergence form and harmonic coordinates}

This section contains the periodic objects and the coordinate gauge. We use only standard periodic Schauder--Fredholm theory, adjoint Fokker--Planck regularity and planar \(\sigma\)-harmonic mapping results.

\begin{lemma}[Periodic skew potential]\label{lem:per-skew}
Assume \(\mper\in C^{0,\theta_0}(\T^2)\) with \(\theta_0\in(0,\tau]\) is the periodic invariant density defined in Lemma~\ref{lem:per-density} and satisfies the periodic centering assumption~\ref{ass:centering}.
%There exists \(\theta_0\in(0,\tau]\) such that \(\mper\in C^{0,\theta_0}(\T^2)\). 
Define the distributional vector field
\begin{equation}\label{eq:Fper-def}
 F^{\per}:=\mper\bper+\diverg(\mper\aper).
\end{equation}
Then \(\diverg F^{\per}=0\) and \(\avg{F^{\per}}=0\). There is a unique zero-mean periodic stream function \(\psi^{\per}\in C^{0,\theta_0}(\T^2)\) such that
\begin{equation}\label{eq:div-Bper-Fper}
 \diverg \left(\psi^{\per}R\right)=F^{\per},
\end{equation}
where \[
 R=\begin{pmatrix}0&1\\-1&0\end{pmatrix}.
\]
Consequently
\begin{equation}\label{eq:Adivper-def}
 \Adiv_{\per}=\mper\aper-B^{\per}
\end{equation}
is periodic, H\"older continuous and coercive in the divergence-form sense.
Here $B^{\per}:=\psi^{\per}R$.
\end{lemma}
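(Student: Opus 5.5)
The plan follows the four assertions of the lemma in order: the divergence-free property, the zero mean, the construction of the stream function, and the coercivity of \(\Adiv_{\per}\).

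\emph{Divergence-free property.} The invariant equation of Lemma~\ref{lem:per-density} reads
\[
 \partial_i\bigl(\partial_j(a^{\per}_{ij}\mper)+b^{\per}_i\mper\bigr)=0
\]
in distributions on \(\T^2\). Because \(\aper\) is symmetric, \(\partial_j(a^{\per}_{ij}\mper)=(\diverg(\mper\aper))_i\) with the convention \((\diverg Q)_j=\partial_iQ_{ij}\). Hence the invariant equation is exactly \(\diverg F^{\per}=0\). Here \(F^{\per}\) is the sum of the continuous field \(\mper\bper\) and the divergence of the \(C^{0,\theta_0}\) matrix field \(\mper\aper\), so \(F^{\per}\in C^{-1+\theta_0}\) (or \(H^{-1}\)) on \(\T^2\).

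\emph{Zero mean.} We test \(F^{\per}\) against the constant function \(1\), which is legitimate on the torus. The divergence part has zero mean, since \(\langle \diverg(\mper\aper),1\rangle=0\) by periodicity. The remaining part is \(\avg{\mper\bper}\), which vanishes by Assumption~\ref{ass:centering}. Therefore \(\avg{F^{\per}}=0\).

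\emph{Stream function.} We pass to Fourier series on \(\T^2\). For \(k\neq0\), the condition \(\diverg F^{\per}=0\) says \(k\cdot\widehat F(k)=0\), so \(\widehat F(k)\) is parallel to \(k^\perp\). Write \(\widehat F(k)=c(k)\,R^Tk\), up to the sign convention dictated by \(\diverg(\psi R)=R^T\nabla\psi\), and note that \(\widehat F(0)=0\). The equation \(\diverg(\psi^{\per}R)=F^{\per}\) becomes \(2\pi i\,R^Tk\,\widehat\psi(k)=\widehat F(k)\), which is solved by \(\widehat\psi(0)=0\) and \(\widehat\psi(k)=c(k)/(2\pi i)\). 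Uniqueness among zero-mean solutions follows because a periodic distribution with \(R^T\nabla\psi=0\) is constant.

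Regularity is cleaner without Fourier coefficients. Take \(\psi^{\per}=-\curl\Delta^{-1}F^{\per}\), more precisely \(\psi^{\per}=\partial_2\Delta^{-1}F_1-\partial_1\Delta^{-1}F_2\) with signs fixed to match \(R\). Then
\[
 \psi^{\per}=\mathcal R_0(\mper\bper)+\mathcal R_1(\mper\aper),
\]
where \(\mathcal R_1\) is a zero-order periodic Calder\'on--Zygmund (Riesz-type) operator applied to the \(C^{0,\theta_0}\) field \(\mper\aper\), and \(\mathcal R_0\) is a smoothing operator of order \(-1\) applied to a bounded field. Periodic Schauder theory (boundedness of Riesz transforms on \(C^{0,\theta_0}(\T^2)\) for \(0<\theta_0<1\)) then gives \(\psi^{\per}\in C^{0,\theta_0}(\T^2)\). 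The identity \(\diverg(\psi R)=R^T\nabla\psi\) together with \(\diverg F=0\) and \(\avg F=0\) confirms that the construction is consistent. This Hölder step is the main obstacle, and the route is to invoke the \(C^{0,\theta}\)-boundedness of zero-order periodic multipliers.

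\emph{Coercivity.} With \(B^{\per}=\psi^{\per}R\), the field \(B^{\per}\) is skew-symmetric, periodic and Hölder continuous. Hence \(\Adiv_{\per}=\mper\aper-B^{\per}\) is periodic and Hölder continuous. Since \(B^{\per}\) is skew, \(\xi\cdot B^{\per}\xi=0\), and so
\[
 \xi\cdot\Adiv_{\per}\xi=\mper\,\xi\cdot\aper\xi\ge(\inf\mper)\lambda|\xi|^2 .
\]
The right-hand side is positive by the lower bound on \(\mper\) in Lemma~\ref{lem:per-density}. The boundedness of \(\Adiv_{\per}\) follows from the boundedness of \(\mper\), \(\aper\) and \(\psi^{\per}\). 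This gives coercivity in the divergence-form sense.
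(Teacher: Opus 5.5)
Your proposal is correct and follows essentially the same route as the paper. Both arguments take divergence-freeness from the adjoint equation and the zero mean from the centering condition, define the zero-mean stream function by Fourier series split into an order \(-1\) part acting on \(\mper\bper\) and a \(C^{0,\theta_0}\)-preserving zero-order periodic Calder\'on--Zygmund part acting on \(\mper\aper\), and get coercivity from the skew-symmetry of \(B^{\per}\) together with the lower bound on \(\mper\). One cosmetic point: since \(\diverg(\psi R)=(-\partial_2\psi,\partial_1\psi)\), the real-space formula should read \(\psi^{\per}=\partial_1\Delta^{-1}F^{\per}_2-\partial_2\Delta^{-1}F^{\per}_1\), the opposite of the sign you wrote; this version agrees with the paper's multiplier \((k_1\widehat F_2-k_2\widehat F_1)/(i|k|^2)\) and with your own Fourier-side definition.
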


\begin{proof}
The adjoint equation for \(\mper\) gives \(\diverg F^{\per}=0\). Its mean is zero because
\(\avg{\mper\bper}=0\) and the mean of a periodic divergence vanishes. 
% The regularity and
% positivity of \(\mper\) follow from the local double-divergence theory of
% Bogachev--Shaposhnikov: higher integrability and H\"older regularity are contained in
% \cite[Theorems~2.1 and~3.1]{BogachevShaposhnikov}, while Harnack and strict positivity are in
% \cite[Corollary~3.6]{BogachevShaposhnikov}.

We define the zero-mean stream function on the torus by the Fourier formula
\[
 \widehat\psi^{\per}(k)=\frac{k_1\widehat F^{\per}_2(k)-k_2\widehat F^{\per}_1(k)}{i|k|^2},
 \qquad k\in2\pi\Z^2\setminus\{0\},\qquad \widehat\psi^{\per}(0)=0.
\]
The part of this multiplier acting on the non-divergence contribution to \(F^{\per}\) is of order
\(-1\), and the part acting on the term \(\diverg(\mper\aper)\) is a periodic zero-order
Calder\'on--Zygmund multiplier. The latter preserves H\"older continuity by the usual periodic
Calder\'on--Zygmund estimates; equivalently, one localizes the kernels and uses the homogeneous
singular-integral bounds in \cite[Ch.~5]{Grafakos}. Hence \(\psi^{\per}\in C^{0,\theta_0}\). The Fourier definition gives
\(\diverg(\psi^{\per}R)=F^{\per}\). The uniqueness of $\psi^{\per}$ follows from the zero-mean condition. 
Finally, since \(B^{\per}=\psi^{\per}R\) is skew-symmetric,
\[
 \xi\cdot\Adiv_{\per}(y)\xi=\mper(y)\,\xi\cdot\aper(y)\xi.
\]
The lower Harnack bound for \(\mper\) and the ellipticity of \(\aper\) give the coercivity of
\(\Adiv_{\per}\).
\end{proof}

\begin{lemma}[Periodic harmonic coordinates]\label{lem:harmcoord}
There are periodic functions \(\chi_\alpha\in C^{2,\tau'}_{\per}(\R^2)\) with $\alpha=1,\,2$, \(\avg{\chi_\alpha}=0\), such that
\begin{equation}\label{periodic-coord-cell}
 -\aper:D^2\chi_\alpha+\bper\cdot\nabla\chi_\alpha=-b_\alpha^{\per}
 \qquad\text{on }\T^2 .
\end{equation}
If \(Y:=P(x)=x+\chi(x)\), then \(L_{\per}P_\alpha=0\), \(\alpha=1,2\), and
\begin{equation}\label{sigmaharmonic}
 \diverg(\Adiv_{\per}\nabla P_\alpha)=0
\end{equation}
in distributions. Moreover \(P:\R^2\to\R^2\) is a uniformly bi-Lipschitz
\(C^{2,\tau'}\) diffeomorphism, \(D^2P_{\alpha}\in (L^\infty_{\per}(\R^2))^{2\times 2}\), and the derivatives of
\(\Theta=P^{-1}\) are periodic.
\end{lemma}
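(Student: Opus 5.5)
We will build the periodic cell correctors first, then obtain the divergence-form identity, and finally get the diffeomorphism property from planar \(\sigma\)-harmonic mapping theory.

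\emph{Step 1: solvability of the cell problems.} The periodic operator \(L_{\per}=-\aper:D^2+\bper\cdot\nabla\) acts on \(C^{2,\tau}(\T^2)\) with H\"older coefficients. By periodic Schauder theory and the strong maximum principle, it is Fredholm of index zero. Its kernel is the constants, and the kernel of its adjoint is spanned by \(\mper\) from Lemma~\ref{lem:per-density}. So \(L_{\per}\chi=g\) is solvable for \(g\in C^{0,\tau}(\T^2)\) if and only if \(\avg{\mper g}=0\). For \(g=-b^{\per}_\alpha\), this is exactly Assumption~\ref{ass:centering}. This yields \(\chi_\alpha\in C^{2,\tau}_{\per}\), unique once we impose \(\avg{\chi_\alpha}=0\). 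In particular \(D^2P_\alpha=D^2\chi_\alpha\) is bounded and periodic, and \(L_{\per}P_\alpha=-b^{\per}_\alpha+b^{\per}_\alpha\cdot 1\cdot\)(from \(\nabla x_\alpha=e_\alpha\)) \(+L_{\per}\chi_\alpha=0\).

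\emph{Step 2: the divergence-form identity \eqref{sigmaharmonic}.} For \(u\in C^2\), compute \(-\diverg(\Adiv_{\per}\nabla u)\). Since \(B^{\per}\) is skew, \(B^{\per}:D^2u=0\). Hence
\[
-\diverg(\Adiv_{\per}\nabla u)=-\mper\aper:D^2u-\bigl(\diverg(\mper\aper)-\diverg B^{\per}\bigr)\cdot\nabla u .
\]
By Lemma~\ref{lem:per-skew}, \(\diverg B^{\per}=F^{\per}=\mper\bper+\diverg(\mper\aper)\). The bracket therefore equals \(-\mper\bper\), and \(-\diverg(\Adiv_{\per}\nabla u)=\mper L_{\per}u\).

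Since \(\mper\) and \(B^{\per}\) are only H\"older, this computation must be done distributionally. Test against \(\varphi\in C_c^\infty\), move derivatives onto the smooth factor, and pair \(\diverg B^{\per}\) with \(\varphi\nabla u\in C^1_c\). Alternatively, mollify \(\mper\aper\) and \(B^{\per}\) and pass to the limit. Applying the identity to \(u=P_\alpha\) gives \eqref{sigmaharmonic}.

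\emph{Step 3: global diffeomorphism (main obstacle).} By Step 2, \(P=(P_1,P_2)\) is a \(\sigma\)-harmonic map in the plane for the coercive, bounded, periodic coefficient \(\sigma=\Adiv_{\per}\), which need not be symmetric. Its components have the affine-plus-periodic form \(x_\alpha+\chi_\alpha\).

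We then invoke Alessandrini--Nesi \cite{AlessandriniNesi1,AlessandriniNesi2}. Their results say that such periodic \(\sigma\)-harmonic maps with the identity as affine part are homeomorphisms of \(\R^2\) with \(\det DP>0\) everywhere (the Lewy-type theorem). This requires checking that their hypotheses (bounded measurable, uniformly elliptic \(\sigma\), possibly non-symmetric) are met. Since \(P\in C^1\) and \(\det DP\) is continuous, periodic and nonvanishing, we get \(\det DP\ge c>0\). Together with \(|DP|\le C\), this gives \(|DP^{-1}|\le C'\).

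\emph{Step 4: bi-Lipschitz bound and periodicity of \(D\Theta\).} The lower Lipschitz bound is global, not just pointwise, and we obtain it from the relation \(P(x+k)=P(x)+k\), \(k\in\Z^2\), together with injectivity. Suppose \(|P(x)-P(y)|\ge c|x-y|\) failed. Translating and using \(\chi\) bounded handles large \(|x-y|\), since \(|P(x)-P(y)|\ge|x-y|-2\|\chi\|_\infty\). For small and moderate \(|x-y|\), we use a compactness argument on a fundamental cell, combining injectivity with the local inverse function theorem.

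Finally, \(\Theta(Y+k)=\Theta(Y)+k\), so \(D\Theta=(DP)^{-1}\circ\Theta\) and its higher derivatives are periodic and \(C^{1,\tau'}\).
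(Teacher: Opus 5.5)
Your proposal is correct and follows the paper's proof essentially step for step: Fredholm solvability of the cell problems from the centering condition, the distributional identity \(\mper L_{\per}u=-\diverg(\Adiv_{\per}\nabla u)\), the periodic homeomorphism theorem of Alessandrini--Nesi combined with their Lewy-type theorem, and periodicity of \(D\Theta\) from \(\Theta(Y+k)=\Theta(Y)+k\). The only variation is Step~4, where the paper gets the lower Lipschitz bound directly from \(|D\Theta|=|(DP)^{-1}\circ\Theta|\le \sup|DP|/\min_{\T^2}\det DP\) and integration along segments in the \(Y\)-plane, so your compactness argument is valid but not needed.

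Two small corrections are needed. First, the everywhere-nonvanishing Jacobian requires \(\Adiv_{\per}\) to be H\"older continuous, not merely bounded measurable as in the hypotheses you list; for bounded measurable coefficients one only gets \(\det DP>0\) almost everywhere. Lemma~\ref{lem:per-skew} supplies the required H\"older continuity. Second, your line computing \(L_{\per}P_\alpha\) is garbled and should simply read \(L_{\per}x_\alpha+L_{\per}\chi_\alpha=b^{\per}_\alpha-b^{\per}_\alpha=0\).
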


\begin{proof}
We first record the periodic invariant-measure identity, which is the formal reduction used in
\cite[Section~4]{BLL}. For every \(u\in C^2_{\loc}(\R^2)\) whose gradient is periodic, the identity
\[
 \mper L_{\per}u=-\diverg(\Adiv_{\per}\nabla u)
\]
holds in distributions. Indeed, expanding \(\Adiv_{\per}=\mper\aper-B^{\per}\) gives 
\(-\diverg(\Adiv_{\per}\nabla u)=-\partial_i(\mper a^{\per}_{ij})\partial_j u-\mper a^{\per}_{ij}\partial_{ij}u
+\partial_iB^{\per}_{ij}\partial_j u+B^{\per}_{ij}\partial_{ij}u\). The last term vanishes because
\(B^{\per}\) is skew-symmetric and \(D^2u\) is symmetric. Since
\(\partial_iB^{\per}_{ij}=\mper b^{\per}_j+\partial_i(\mper a^{\per}_{ij})\), we have $ \mper L_{\per}u=-\diverg(\Adiv_{\per}\nabla u)$. The computation is
unchanged for affine-plus-periodic functions with periodic gradients.

We now solve the cell problem. Fix \(0<\gamma\le\tau\) and consider
\[
  L_{\per}:C^{2,\gamma}_{\per}(\T^2)/\R\longrightarrow C^{0,\gamma}_{\per}(\T^2),
 \qquad  L_{\per}v=-\aper:D^2v+\bper\cdot\nabla v.
\]
By Schauder estimates and the Fredholm alternative on the compact torus
\cite[Ch.~5, Sec.~5.3 and Ch.~6, Secs.~6.1--6.3]{GT}, this operator is Fredholm of index zero.
Its kernel consists of constants, and its adjoint kernel is spanned by the normalized positive
periodic density \(\mper\). Hence the solvability condition for \eqref{periodic-coord-cell} is
\[
 \int_{\T^2}(-b_\alpha^{\per})\mper=0,
\]
which is exactly \(\avg{\mper\bper}=0\). The periodic Schauder estimate yields
\(\chi_\alpha\in C^{2,\tau'}_{\per}(\R^2)\). Since \(L_{\per}x_\alpha=b_\alpha^{\per}\), the map
\(P_\alpha(x)=x_\alpha+\chi_\alpha(x)\) satisfies \(L_{\per}P_\alpha=0\), and the preceding identity gives
\eqref{sigmaharmonic}.

The weak equation in \eqref{sigmaharmonic} places \(P\) in the affine-periodic
\(\sigma\)-harmonic class of Alessandrini--Nesi, with affine part \(I\) and coefficient
\(\sigma=\Adiv_{\per}\). Indeed \(P-I=\chi\in (C^{2,\tau'}_{\per}(\R^2))^2\subset (W^{1,2}_{\#}(\R^2))^2\), and
\(\Adiv_{\per}\) is periodic, bounded, H\"older continuous and uniformly elliptic in the coercive
sense. The periodic theorem of Alessandrini--Nesi \cite[Theorem~4.1]{AlessandriniNesi1} gives
that \(P\) is a global homeomorphism of \(\R^2\) and that the Jacobian has a positive sign almost
everywhere. Since \(\Adiv_{\per}\in (C^{0,\theta_0}(\R^2))^{2\times 2}\), their Lewy-type theorem
\cite[Theorem~1.1]{AlessandriniNesi2} gives \(\det DP(x)\ne0\) for every \(x\). Hence
\(\det DP>0\) everywhere.

Set
\[
 j_0=\min_{\T^2}\det DP>0,
 \qquad M_0=\|DP\|_{(L^\infty(\T^2))^{2\times 2}}<\infty .
\]
Since for a \(2\times2\) matrix \(F\), one has \(\sigma_{\min}(F)=|\det F|/\sigma_{\max}(F)\),
\[
 \frac{j_0}{M_0}|\xi|\le |DP(y)\xi|\le M_0|\xi|.
\]
The upper bound gives the Lipschitz estimate by integration along segments. The global
homeomorphism property and the uniform invertibility of \(DP\) give, by the inverse function
theorem, \(\Theta\in (C^{2,\tau'}_{\loc}(\R^2))^2\) and \(\|D\Theta\|_{(L^\infty(\mathbb{T}^2))^{2\times 2}}\le M_0/j_0\). The identity
\(P(x+k)=P(x)+k=Y+k\) implies \(\Theta(Y+k)=\Theta(Y)+k\), and differentiating gives the periodicity
of the derivatives of \(\Theta\). Finally, \(D^2P_{\alpha}=D^2\chi_{\alpha}\in (L^\infty_{\per}(\R^2))^{2\times 2}\) according to the same Schauder
estimate.
\end{proof}

\begin{proof}[Proof of Theorem~\ref{thm:gauge}]
Lemma~\ref{lem:harmcoord} gives the map \(P\). We prove the transformation formula. Let
\(Y=P(x)\), \(x=\Theta(Y)\), and set \(u(x)=w(Y)\). For any \(w\in C^2_{\rm loc}(\mathbb{R}^2)\), the chain rule gives
\[
 \partial_i u=(\partial_\alpha w)(P(x))\partial_iP_\alpha(x),
\]
and
\[
 \partial_{ij}u=(\partial_{\alpha\beta}w)(P(x))\partial_iP_\alpha(x)\partial_jP_\beta(x)
 +(\partial_\alpha w)(P(x))\partial_{ij}P_\alpha(x).
\]
Therefore
\[
 L_tu(x)=-(\haper+t\hae):D_Y^2w(Y)+\widehat b_t(Y)\cdot\nabla_Yw(Y),
\]
with
\[
 \haper(Y)=DP(x)\aper(x)DP(x)^T,
 \qquad
 \hae(Y)=DP(x)\arme(x)DP(x)^T,
\]
and
\[
 \widehat b_{t,\alpha}(Y)=L_tP_\alpha(x)
 =-a_t(x):D^2P_\alpha(x)+b_t(x)\cdot\nabla P_\alpha(x),\quad \alpha = 1,\,2.
\]
Since \(L_{\per}P_\alpha=0\), the periodic part of \(\widehat b_t\) vanishes and
\[
 \widehat b_{t,\alpha}(Y)=t\bigl[-\arme(x):D^2P_\alpha(x)+\brme(x)\cdot\nabla P_\alpha(x)\bigr]
 :=t\hbe_{\alpha}(Y),\quad \alpha = 1 ,\,2.
\]
This proves the displayed formula in the theorem and the definitions
\eqref{haperdef}--\eqref{hbedef}. If \(w\in W^{2,p}_{\loc}\), the same identities hold a.e.: mollify \(w\) on compact subsets of the range of \(P\), use the bi-Lipschitz change of variables and pass to the limit in \(L^p_{\loc}\). The distributional formulation follows by testing against compactly supported functions and the same change of variables. Finally, \(DP,D^2P\) are bounded and periodic, while \(\arme_{ij},\brme_i\in L^M(\R^2)\cap L^\infty(\R^2)\); hence \(\hae_{ij},\hbe_i\in L^M(\R^2)\cap L^\infty(\R^2)\).
\end{proof}

\subsection{Small critical-drift compactness}

The compactness lemma used in the blow-down argument is a strong-solution result. The statement
below spells out the sign convention and the ABP comparison used later.

\begin{lemma}[Strong small-$L^2$ drift compactness]\label{lem:smalldrift}
Let $U\Subset\R^2$ be bounded and let $R_n\to+\infty$. Let
\[
 A_n(x)=A_{\per}(R_nx)+E_n(x),
\]
where $A_{\per}\in (C^{0,\tau}_{\per}(\R^2))^{2\times 2}$ is symmetric and uniformly elliptic. Assume that it admits periodic
non-divergence correctors $\chi_M $ for every $M\in\mathcal S^2$, such that
\begin{equation}
 -A_{\per}(x):(M+D_y^2\chi_M(x))=-\overline A:M,
 \qquad \chi_M\in C^{2,\tau}_{\per},
 \label{cellgeneric}
\end{equation}
where the homogenized matrix $\overline A$ is uniformly elliptic. Assume
\[
 \|E_n\|_{(L^\infty(U))^{2\times 2}}\to0,
 \qquad \|C_n\|_{(L^2(U))^2}\to0,
\]
and, after discarding finitely many indices, $A_n$ are uniformly elliptic with constants independent
of $n$. Let
\[
 w_n\in W^{2,2}_{\loc}(U)\cap C(U),
 \qquad -A_n:D^2w_n+C_n\cdot\nabla w_n=0\quad\text{a.e. in }U,
\]
and assume $\|w_n\|_{L^\infty(U)}\leq M_0$. Then a subsequence of $w_n$ converges locally uniformly to a
continuous function $w$, and
\[
 -\overline A:D^2w=0
\]
in the viscosity sense in $U$.
\end{lemma}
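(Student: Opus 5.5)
The argument has three parts: equicontinuity from Mooney's critical-drift estimates, a local uniform limit by Arzel\`a--Ascoli, and identification of the limit equation by a perturbed test function argument that uses the periodic correctors \(\chi_M\).

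\textbf{Compactness.} The equations are satisfied by strong \(W^{2,2}_{\loc}\) solutions. The diffusion matrices \(A_n\) are uniformly elliptic, and the drifts are small in the critical space: \(\|C_n\|_{L^2(U)}\to0\). In this regime Mooney's critical-drift estimates \cite{Mooney} give, for \(n\) large, an interior H\"older modulus for \(w_n\) that depends only on \(\lambda,\Lambda\), \(M_0\) and the distance to \(\partial U\). These estimates are of Krylov--Safonov/ABP type and do not use any continuity of \(A_n\). This matters, because \(A_{\per}(R_n\cdot)\) oscillates on the scale \(R_n^{-1}\) and has no uniform modulus of continuity. Arzel\`a--Ascoli, together with the bound \(\|w_n\|_{L^\infty(U)}\le M_0\), then gives a subsequence converging locally uniformly to some \(w\in C(U)\).

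\textbf{Identification of the limit.} We prove that \(w\) is a viscosity subsolution; the supersolution case is symmetric. Let \(\varphi\) be a quadratic polynomial touching \(w\) from above, strictly, at \(x_0\). Set \(M=D^2\varphi(x_0)\) and suppose, for contradiction, that \(-\overline A:M>0\). We build the perturbed test function
\[
 \varphi_n(x)=\varphi(x)+R_n^{-2}\chi_M(R_nx)-\delta|x-x_0|^2/2 .
\]
By \eqref{cellgeneric}, on a small ball \(B_\rho(x_0)\) we have
\[
 -A_n:D^2\varphi_n=-A_{\per}(R_nx):(M+D^2_y\chi_M)+O(\rho+\delta)+O(\|E_n\|_\infty),
\]
and this is at least \(c_0>0\) for \(\rho,\delta\) small and \(n\) large. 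Since \(\chi_M\in C^{2,\tau}_{\per}\), the gradient of the corrector term is \(R_n^{-1}\nabla\chi_M(R_nx)=O(R_n^{-1})\), so \(\nabla\varphi_n\) is bounded uniformly in \(n\). Moreover \(R_n^{-2}\chi_M(R_n\cdot)\to0\) uniformly. Because the touching is strict, \(w_n-\varphi_n\) attains an interior maximum in \(B_\rho(x_0)\) for large \(n\), with a positive margin over its boundary values.

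\textbf{Comparison, which is the main obstacle.} The difference \(v_n=w_n-\varphi_n\) is a strong \(W^{2,2}\) function satisfying
\[
 -A_n:D^2v_n+C_n\cdot\nabla v_n\le -c_0+|C_n|\,|\nabla\varphi_n| .
\]
Viscosity touching is not available here, because \(w_n\) is only \(W^{2,2}\). Instead we apply the ABP maximum principle for strong solutions with drift in \(L^2=L^d\). Its constant is uniform when \(\|C_n\|_{L^2}\) is small, which is exactly the regime of Mooney's lemma. This gives
\[
 \sup_{B_\rho} v_n\le \sup_{\partial B_\rho} v_n+C\rho\,\bigl\|(-c_0+|C_n||\nabla\varphi_n|)^+\bigr\|_{L^2(B_\rho)} .
\]
The positive part is supported where \(|C_n|\ge c_0/\|\nabla\varphi_n\|_\infty\). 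Since \(\nabla\varphi_n\) is bounded uniformly in \(n\), its \(L^2\) norm is at most \(C\|C_n\|_{L^2}\to0\). This contradicts the positive interior margin of \(v_n\), so \(-\overline A:M\le0\), and \(w\) is a subsolution. The delicate points are the uniformity of the ABP constant under a small critical drift and the precise sign convention. For the ABP constant I would cite the small-\(L^d\) drift ABP estimate, as in \cite{Mooney}, in its strong-solution form.
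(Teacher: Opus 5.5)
Your proposal is correct and follows essentially the same route as the paper. Both arguments use Mooney's H\"older estimate and Arzel\`a--Ascoli for compactness, the perturbed test function $\varphi+R_n^{-2}\chi_M(R_n\cdot)$, and the strong $W^{2,2}$ ABP comparison with an $L^2$-small drift to force $\|C_n\cdot\nabla\varphi_n\|_{L^2(B_\rho)}\to0$ to contradict the interior margin. The extra term $-\delta|x-x_0|^2/2$ is unnecessary, since $\varphi$ is quadratic and $-\overline A:M>0$ already gives the positive lower bound. If you keep it, $\delta$ must be chosen after $\rho$, so that $\delta\rho^2/2$ stays below the boundary gap of $w-\varphi$ on $\partial B_\rho(x_0)$.
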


\begin{proof}
We first obtain the compactness. Writing the equation as \(A_n:D^2w_n-C_n\cdot\nabla w_n=0\), we
are in the strong non-divergence setting of Mooney \cite[Definition~1.1]{Mooney}, with drift
\(-C_n\in (L^2(U))^2\). After scaling any ball \(B_{2r}(x_0)\Subset U\) to \(B_1\), the drift becomes
\(2rC_n(x_0+2rz)\), whose \(L^2\)-norm is \(\|C_n\|_{L^2(B_{2r}(x_0))}\). Mooney's H\"older
estimate \cite[Theorem~1.3]{Mooney} therefore gives uniform H\"older bounds on compact subsets
of \(U\), with constants depending only on the ellipticity constants, \(M_0\), and an upper bound
for \(\|C_n\|_{(L^2(U))^2}\). Arzel\`a--Ascoli yields, after extraction, local uniform convergence
\(w_n\to w\).

We shall use the following strong comparison consequence of Mooney's ABP estimate
\cite[Section~5, Theorem~5.1]{Mooney}. If \(Y\in W^{2,2}(B_\rho)\cap C(B_\rho)\), \(Y\le0\) on
\(\partial B_\rho\), and \(L_nY\le f\) a.e. in \(B_\rho\), where
\(L_n=-A_n:D^2+C_n\cdot\nabla\), then
\[
 \sup_{B_\rho}Y^+\le C_\rho\|f^+\|_{L^2(B_\rho)}.
\]
This is the strong \(W^{2,2}\) comparison estimate; no endpoint viscosity ABP statement is used.
The sign convention follows by applying Mooney's estimate to \(-Y\) for the operator
\(A_n:D^2-C_n\cdot\nabla\).

We prove the viscosity subsolution inequality. Let \(\phi\in C^2(U)\) touch \(w\) strictly from
above at \(x_0\). Choose \(B_\rho(x_0)\Subset U\) and \(\kappa>0\) so that
\(w(x_0)=\phi(x_0)\) and \(w-\phi\le -4\kappa\) on \(\partial B_\rho(x_0)\). We claim \(-\overline A:D^2\phi(x_0)\le0\). Let
\(M=D^2\phi(x_0)\). Assume, for contradiction, that \(-\overline A:D^2\phi(x_0)=-\overline A:M>0\). With the corrector from
\eqref{cellgeneric}, set
\[
 \phi_n(x)=\phi(x)+R_n^{-2}\chi_M(R_nx).
\]
Then \(\phi_n\to\phi\) uniformly and \(\nabla\phi_n,D^2\phi_n\) are uniformly bounded. The cell
problem, the smallness of \(E_n\), and the continuity of \(D^2\phi\) in a sufficiently small ball give,
for large \(n\),
\[
 L_n\phi_n\ge \frac\vartheta2+h_n,
 \qquad h_n=C_n\cdot\nabla\phi_n,
 \qquad \|h_n\|_{L^2(B_\rho)}\to0.
\]
For \(Y_n=w_n-\phi_n+2\kappa\), local uniform convergence gives \(Y_n\le0\) on
\(\partial B_\rho\) for large \(n\). Since $\lim\limits_{n}Y_n(x_0)=2\kappa$, then \(\sup\limits_{B_\rho} Y_n^+\ge\kappa\) holds for large \(n\). Since \(L_nw_n=0\), we have
\(L_nY_n\le-\vartheta/2-h_n\); the positive part of the right-hand side is bounded by \(|h_n|\). The
comparison estimate gives \(\kappa\le C\|h_n\|_{L^2(B_\rho)}\to0\), a contradiction. Hence
\(-\overline A:D^2\phi(x_0)\le0\). Non-strict contacts are obtained by adding
\(\varepsilon|x-x_0|^2\) and letting \(\varepsilon\downarrow0\).

The supersolution inequality is identical. If \(\phi\) touches \(w\) strictly from below and
\(-\overline A:D^2\phi(x_0)<0\), the same perturbed test function satisfies the reversed inequality;
applying the comparison estimate to \(\phi_n-w_n+2\kappa\) gives the same contradiction. Thus
\(w\) is a viscosity solution of \(-\overline A:D^2w=0\).
\end{proof}
\begin{remark}[Periodic non-divergence correctors]\normalfont
The corrector assumption in \eqref{cellgeneric} is standard. Let \(\rho_{\per}\)
be the normalized positive adjoint density for \(-A_{\per}:D^2\), that is
\(\partial_{ij}(A_{\per}^{ij}\rho_{\per})=0\) on \(\T^2\) and
\(\langle \rho_{\per}\rangle=1\). For \(M\in\mathcal S^2\), set
\[
    \overline A:M
    =
    \int_{\T^2}\rho_{\per}(y)\,A_{\per}(y):M\,dy .
\]
Then \(A_{\per}:M-\overline A:M\) is orthogonal to the adjoint kernel, and the
Fredholm alternative on the torus gives a unique zero-mean periodic solution of
\[
    -A_{\per}:D^2\chi_M=A_{\per}:M-\overline A:M .
\]
Equivalently, \(-A_{\per}(y):(M+D_y^2\chi_M)=-\overline A:M\). Since
\(A_{\per}\in (C_{\per}^{0,\tau}(\R^2))^{2\times 2}\), the periodic Schauder estimate gives
\(\chi_M\in C_{\per}^{2,\tau'}(\R^2)\) for some \(\tau'\in(0,\tau]\); see
Avellaneda--Lin \cite[p.~152, (3.1)--(3.4), and p.~170, (4.10)--(4.11)]{AvellanedaLinND}
and Gilbarg--Trudinger \cite[Ch.~6, Theorems~6.2 and~6.6]{GT}. Finally,
\(\overline A\) is uniformly elliptic by the positivity of \(\rho_{\per}\) and the
uniform ellipticity of \(A_{\per}\).
\end{remark}
\subsection{The finite-energy Liouville theorem}

\begin{lemma}[Good scales]\label{lem:goodscales}
Let $N:[1,\infty)\to(0,\infty)$ be nondecreasing and satisfy $N(R)\leq CR^\alpha$ for some
$\alpha<\beta<1$. Then there exist $R_n\to\infty$ such that
\begin{equation}
 N(SR_n)\leq 2S^\beta N(R_n)
 \quad\text{for every }S\geq1.
 \label{goodscale}
\end{equation}
\end{lemma}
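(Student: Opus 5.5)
We will argue by contradiction with a dyadic selection of scales, in the spirit of the standard good-scale lemma. The condition \eqref{goodscale} is trivial when \(S\in[1,2^{1/\beta}]\) is replaced by a slightly weaker form, so the content is a uniform statement over all \(S\ge1\). We call \(R\ge1\) \emph{bad} if there exists \(S\ge1\) with \(N(SR)>2S^\beta N(R)\), and we aim to show that the set of good \(R\) is unbounded. Then any sequence of good scales \(R_n\to\infty\) does the job.

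Suppose instead that every \(R\ge R_0\) is bad. Starting from \(R^{(0)}=R_0\), we choose \(S_0\ge1\) with \(N(S_0R^{(0)})>2S_0^\beta N(R^{(0)})\) and set \(R^{(1)}=S_0R^{(0)}\). Iterating gives \(R^{(k+1)}=S_kR^{(k)}\) and
\[
 N(R^{(k)})>2^k\Bigl(\prod_{j<k}S_j\Bigr)^{\beta}N(R_0)=2^k\Bigl(\frac{R^{(k)}}{R_0}\Bigr)^{\beta}N(R_0).
\]
Each \(S_j\) must in fact be strictly greater than \(1\), since \(S=1\) would force \(N(R)>2N(R)\), impossible because \(N>0\). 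Monotonicity does not bound \(S_j\) away from \(1\), but this is not needed.

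We now combine this with the growth bound \(N(R^{(k)})\le C(R^{(k)})^\alpha\). This gives
\[
 2^kR_0^{-\beta}N(R_0)\,(R^{(k)})^{\beta}<C(R^{(k)})^{\alpha},
\qquad\text{that is,}\qquad
 2^k(R^{(k)})^{\beta-\alpha}<C R_0^{\beta}/N(R_0).
\]
Since \(\beta>\alpha\) and \(R^{(k)}\ge R_0\ge1\), the left side is at least \(2^kR_0^{\beta-\alpha}\to\infty\). This is a contradiction for \(k\) large. Hence good scales exist beyond every \(R_0\).

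The only delicate point is that we do not need \(R^{(k)}\to\infty\). The factor \(2^k\) alone drives the contradiction, which is why the constant \(2\) (any constant \(>1\)) appears in \eqref{goodscale}. I expect no real obstacle beyond bookkeeping the telescoping product \(\prod S_j=R^{(k)}/R_0\). This product uses only the multiplicative definition of the \(R^{(k)}\), not monotonicity of \(N\); monotonicity is needed merely to make \(N\) well defined and positive on \([1,\infty)\).
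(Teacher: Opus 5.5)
Your proof is correct and matches the paper's argument: assume every $R\ge R_0$ is bad, chain the bad factors so that $N(R_k)>2^k(R_k/R_0)^\beta N(R_0)$, and contradict the growth bound $N(R_k)\le CR_k^\alpha$ using $R_k\ge R_0$ and $\alpha<\beta$. Two asides are inaccurate but unused by the argument: the opening remark that the condition is ``trivial'' for $S\in[1,2^{1/\beta}]$ is unjustified, since a monotone $N$ can jump by a large factor, and positivity of $N$ comes from the hypothesis rather than from monotonicity.
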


\begin{proof}
Suppose, for contradiction, that the conclusion is false. Then there exists $R_0\geq1$ such
that every $R\geq R_0$ is a bad radius: for each such $R$ one can choose $S\geq1$ satisfying
\[
 N(SR)>2S^\beta N(R).
\]
Starting from $R_0$, choose inductively $S_k\geq1$ such that
\[
 N(S_kR_k)>2S_k^\beta N(R_k),
 \qquad R_{k+1}:=S_kR_k.
\]
Iteration gives
\[
 N(R_k)>2^k\left(\frac{R_k}{R_0}\right)^\beta N(R_0).
\]
Combining this with $N(R_k)\leq CR_k^\alpha$ yields
\[
 2^kN(R_0)<CR_0^\beta R_k^{\alpha-\beta}\leq CR_0^\alpha,
\]
because $R_k\geq R_0$ and $\alpha-\beta<0$. This is impossible as $k\to\infty$.
\end{proof}

\begin{proof}[Proof of Theorem~\ref{thm:liouville}]
Assume that $u$ is not constant and set $M(R)=\osc_{B_R}u$. Choose $R_*\geq1$ such that
$M(R_*)>0$, and apply Lemma~\ref{lem:goodscales} to the positive nondecreasing function
$N(R):=M(R_*R)$. Indeed, Morrey's estimate
\cite[Ch.~7, Theorem~7.17, formula~(7.42)]{GT} gives
$N(R)\le C(R_*R)^{1-2/p}\|\nabla u\|_{(L^p(\R^2))^2}$. Choose
$\beta\in(1-2/p,1)$ and relabel the resulting radii $R_*R_n$ as $R_n$. Then
\begin{equation}\label{osc-growth}
    M(SR_n)\le 2S^\beta M(R_n),\qquad S\ge1.
\end{equation}
We normalize by
\[
    v_n(x)=\frac{u(R_nx)-u(0)}{M(R_n)} .
\]
Then $v_n(0)=0$, $\osc_{B_1}v_n=1$, and \eqref{osc-growth} gives
$\osc_{B_S}v_n\le2S^\beta$ for every $S\ge1$.

We now pass to harmonic coordinates. Put $z:=\Phi_n(x)=R_n^{-1}P(R_nx)$ and write
$v_n(x)=w_n(z)$. Since $P(x)-x$ is bounded and $P$ is uniformly bi-Lipschitz, one has $\Phi_n(x)=x+\frac{\chi(R_nx)}{R_n}$ and
$\Phi_n\to I$ locally uniformly, and fixed annuli are mapped into comparable annuli. If
$u(x)=\widehat u(P(x))$ with $Y=P(x)$, Theorem~\ref{thm:gauge} gives
$-(\haper+t\hae):D_Y^2\widehat u+t\hbe\cdot\nabla_Y\widehat u=0$. Since $w_n(z)=\frac{\widehat u(R_nz)-u(0)}{M(R_n)}$, on every
annulus $U\Subset\R^2\setminus\{0\}$,
\begin{equation}\label{rescaled-liouville-eq}
    -(\haper(R_nz)+E_n(z)):D_z^2w_n+C_n(z)\cdot\nabla_zw_n=0,
    \qquad
    E_n(z)=t\hae(R_nz),\qquad C_n(z)=tR_n\hbe(R_nz),
\end{equation}
where $z=R_n^{-1}P(R_nx)$.
The coefficient error vanishes uniformly on $U$. Indeed, any uniformly H\"older function in
$L^q(\R^2)$, $q<\infty$, tends uniformly to zero at infinity; otherwise one extracts disjoint balls on
which the function is bounded below. Applying this to the entries of $\arme$ and using the bounded
periodic factors in \eqref{haedef} gives $\|E_n\|_{L^\infty(U)}\to0$. For the drift, the change of
variables $y:=R_nx=\Theta(R_nz)\in \Theta(R_n U)$ yields
\[
    \|C_n\|_{(L^2(U))^2}^2
    \le C\int_{\Theta(R_nU)}(|\arme(y)|^2+|\brme(y)|^2)\,dy\to0,
\]
because $\Theta(Y)=Y+O(1)$, the sets $\Theta(R_nU)$ leave every compact subset, and
$\arme\in (L^M(\R^2)\cap L^\infty(\R^2))^{2\times 2}$, $\brme\in (L^M(\R^2)\cap L^\infty(\R^2))^{2}$ and both of them vanish at infinity.

The periodic coefficient $\haper$ has the usual non-divergence correctors by the periodic Schauder
Fredholm theory, and the corresponding homogenized matrix $A_{\hom}$ is uniformly elliptic. Thus
Lemma~\ref{lem:smalldrift} applies on annuli. After a diagonal extraction,
\[
    w_n\to w\quad\text{locally uniformly in }\R^2\setminus\{0\},
    \qquad -A_{\hom}:D^2w=0
\]
in the viscosity sense. The same annular convergence holds for $v_n$, because $\Phi_n\to I$. The
normalization of $v_n$ gives boundedness near the origin; hence the isolated singularity of $w$ is
removable for constant-coefficient uniformly elliptic equations, by Gilbarg--Serrin
\cite[Section~5, Theorem~6]{GilbargSerrin}. Thus $-A_{\hom}:D^2w=0$ in all of $\R^2$.
Passing \eqref{osc-growth} to the limit gives $\osc_{B_S}w\le2S^\beta$. The constant-coefficient
interior gradient estimate, after a linear change of variables,
\cite[Ch.~6, Lemma~6.1(a)]{GT}, gives $\|\nabla w\|_{L^\infty(B_{S/2})}\le CS^{-1}\osc_{B_S}w
\le CS^{\beta-1}$. Since $\beta<1$, letting $S\to\infty$ gives $w\equiv c$.

It remains to push the annular convergence through the origin. Fix $0<\delta<1/4$ and set
$U_{n,\delta}=\Phi_n(B_\delta)$. For large $n$ one has
$U_{n,\delta}\subset B_{2\delta}$ and
$\partial U_{n,\delta}\subset B_{2\delta}\setminus B_{\delta/2}$; hence the annular convergence gives
$\sup_{\partial U_{n,\delta}}|w_n-c|\to0$. On $U_{n,\delta}$ the functions $w_n-c$ and
$-(w_n-c)$ solve the strong equation \eqref{rescaled-liouville-eq}. The coefficients need not be
small there, since the domain may contain the blown-down defect core, but for each fixed $n$ the
strong ABP comparison following from Mooney's estimate \cite[Section~5, Theorem~5.1]{Mooney}
applies. Although stated above on balls, the comparison extends to bounded $C^2$ domains by
exhaustion; the right-hand side is zero, so no uniform geometric constant is used. Thus
\[
    \sup_{U_{n,\delta}}|w_n-c|\le \sup_{\partial U_{n,\delta}}|w_n-c|\to0.
\]
Since \(v_n=w_n\circ\Phi_n\) and \(U_{n,\delta}=\Phi_n(B_\delta)\), the comparison estimate on
\(U_{n,\delta}\) gives
\[
    \sup_{B_\delta}|v_n-c|
    \le
    \sup_{U_{n,\delta}}|w_n-c|
    \to0.
\]
On the complement, we use that \(\Phi_n\to I\) locally uniformly and that the maps
\(\Phi_n\) are uniformly bi-Lipschitz. Thus, for \(n\) large,
\(\Phi_n(\overline{B_1\setminus B_\delta})\subset
\overline{B_2\setminus B_{\delta/2}}\). Since \(w_n\to c\) locally uniformly in
\(\mathbb R^2\setminus\{0\}\), it follows that
\[
    \sup_{B_1\setminus B_\delta}|v_n-c|
    \le
    \sup_{\overline{B_2\setminus B_{\delta/2}}}|w_n-c|
    \to0.
\]
Hence \(v_n\to c\) uniformly in \(B_1\), contradicting
\(\operatorname{osc}_{B_1}v_n=1\). This proves the Liouville theorem.
\end{proof}

\subsection{The central estimate}

\begin{lemma}[Periodic whole-space starting estimate]
\label{lem:perwhole}
Let $1<q<2$ and let $q^*$ be defined by
\begin{equation}\label{qstar-def-perwhole}
    \frac1{q^*}=\frac1q-\frac12 .
\end{equation}
For every
    $f\in  L^q(\R^2)\cap L^{q^*}(\R^2)$,
there exists a solution, unique modulo constants, of
\begin{equation}\label{perwhole-equation}
    L_{\rm per}u=f
    \qquad\text{in }\R^2,
\end{equation}
such that
\[
    \nabla u\in(L^{q^*}(\R^2))^2,\quad D^2u\in (L^{q^*}(\R^2))^{2\times 2},
\]
and
\begin{equation}\label{perwhole-est}
    \|\nabla u\|_{(L^{q^*}(\R^2))^2}
    +\|D^2u\|_{(L^{q^*}(\R^2))^{2\times 2}}
    \le
    C_q\bigl\|f\|_{(L^q\cap L^{q^*})(\R^2)}.
\end{equation}
Moreover
\begin{equation}\label{perwhole-divid}
    -\diverg(\Adiv_{\per}\nabla u)=\mper f
    \qquad\text{in }\mathcal D'(\R^2).
\end{equation}
\end{lemma}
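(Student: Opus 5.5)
The plan is to work directly in the original variables through the periodic divergence-form representative of Lemma~\ref{lem:per-skew}, so that harmonic coordinates are not needed here. The steps are: replace the non-divergence equation by the divergence-form problem
\[
 -\diverg(\Adiv_{\per}\nabla u)=\mper f ,
\]
solve it with a global \(W^{1,q^*}\) estimate, recover \(D^2u\) from local non-divergence \(W^{2,q^*}\) estimates, and obtain uniqueness from Theorem~\ref{thm:liouville} with \(t=0\).

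\textbf{Step 1 (divergence-form rewriting and the right-hand side).} Since \(\mper\) is bounded, \(g:=\mper f\) satisfies \(\|g\|_{L^q\cap L^{q^*}}\le C\|f\|_{L^q\cap L^{q^*}}\). I would write \(g=\diverg G\) with \(G:=\nabla N*g\), where \(N\) is the Newtonian potential in the plane. The kernel \(\nabla N\) is homogeneous of degree \(-1\), so Hardy--Littlewood--Sobolev gives \(\|G\|_{L^{q^*}}\le C\|g\|_{L^q}\); this is exactly where \(1/q^*=1/q-1/2\) enters. Since \(\Adiv_{\per}\) is periodic, H\"older continuous and coercive (possibly non-symmetric), I would then invoke the Avellaneda--Lin uniform \(W^{1,p}\) estimate for periodic divergence-form operators, valid for all \(1<p<\infty\). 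It gives, for compactly supported smooth approximations \(G_k\to G\) in \(L^{q^*}\), energy solutions \(u_k\in \dot H^1\) (by Lax--Milgram) of \(-\diverg(\Adiv_{\per}\nabla u_k)=\diverg G_k\) with \(\|\nabla u_k\|_{L^{q^*}}\le C\|G_k\|_{L^{q^*}}\). Passing to the limit, after normalizing \(u_k\) to have zero average on \(B_1\), yields \(u\in W^{1,q^*}_{\loc}\) with \(\|\nabla u\|_{L^{q^*}}\le C\|f\|_{L^q}\), solving \eqref{perwhole-divid} in \(\mathcal D'\).

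\textbf{Step 2 (back to non-divergence form and second derivatives).} The next task is to show that \(u\in W^{2,q^*}_{\loc}\) and \(L_{\per}u=f\) a.e. Locally, \(-\diverg(\Adiv_{\per}\nabla u)\) equals the non-divergence expression
\[
 \mper\bigl(-\aper:D^2u+\bper\cdot\nabla u\bigr),
\]
because \(B^{\per}:D^2u=0\) and \(\partial_iB^{\per}_{ij}=\mper b^{\per}_j+\partial_i(\mper a^{\per}_{ij})\). However, \(\mper\aper\) is only H\"older continuous, so this identity must be justified. I would do so with the local uniqueness argument of Lemma~\ref{lem:harmcoord}: on each ball, solve the non-divergence Dirichlet problem \(L_{\per}v=f\) in \(W^{2,q^*}\) with boundary data \(u\), and note that \(v\) then also solves the same divergence equation, so \(v=u\) by weak uniqueness for \(\Adiv_{\per}\).

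Then on unit balls the interior Calder\'on--Zygmund estimate gives
\[
 \|D^2u\|_{L^{q^*}(B_1(x_0))}\le C\bigl(\|f\|_{L^{q^*}(B_2(x_0))}+\|\nabla u\|_{L^{q^*}(B_2(x_0))}+\|u-c\|_{L^{q^*}(B_2(x_0))}\bigr),
\]
and Poincar\'e with \(c\) the average of \(u\) on \(B_2(x_0)\) removes the last term. The constant is uniform in \(x_0\) by periodicity, since \(\aper\) is H\"older continuous and \(\bper\) is bounded. Summing over a finite-overlap lattice covering gives \(\|D^2u\|_{L^{q^*}}\le C(\|f\|_{L^{q^*}}+\|\nabla u\|_{L^{q^*}})\), and hence \eqref{perwhole-est}.

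\textbf{Step 3 (uniqueness).} If \(u_1,u_2\) are two solutions in the stated class, their difference solves \(L_{\per}(u_1-u_2)=0\) with gradient and Hessian in \(L^{q^*}\), where \(q^*>2\). Theorem~\ref{thm:liouville} with \(t=0\) and \(p=q^*\) shows that the difference is constant.

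\textbf{Main obstacle.} The delicate point is the global \(W^{1,q^*}\) bound in Step~1, namely the Avellaneda--Lin periodic estimate for a non-symmetric H\"older coefficient on the whole plane, together with the limiting argument from energy solutions. If citing that estimate directly is problematic, the fallback is to prove the required Lipschitz/\(W^{1,p}\) bound for the periodic operator \(-\diverg(\Adiv_{\per}\nabla\,\cdot)\) by the standard compactness argument. That argument works because the homogenized matrix of \(\Adiv_{\per}\) is constant and elliptic, and the coefficient is H\"older continuous, which supplies the small-scale regularity. The identification of weak and strong solutions in Step~2 is the secondary technical point.
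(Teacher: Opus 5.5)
Your proof is correct in substance, but it takes a genuinely different route from the paper's.

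\textbf{Comparison of the two routes.} The paper passes to harmonic coordinates $Y=P(x)$. There $L_{\per}P_\alpha=0$ removes the drift, and the equation becomes $-\haper:D_Y^2w=\widehat f$. The Avellaneda--Lin whole-space non-divergence bound then controls $D_Y^2w$ in both $L^q$ and $L^{q^*}$. The homogeneous Sobolev inequality turns $\|D_Y^2w\|_{L^q}$ into $\|\nabla_Yw\|_{L^{q^*}}$, which is needed for the term $\sum_\alpha\partial_{Y_\alpha}w\,D^2P_\alpha$ in the chain rule. The identity \eqref{perwhole-divid} is derived afterwards from a solution that is already strong.

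You instead start from the invariant-measure divergence form, in three steps:
\begin{itemize}
\item Hardy--Littlewood--Sobolev gives $\mper f=\diverg G$ with $\|G\|_{L^{q^*}}\le C\|f\|_{L^q}$.
\item The periodic divergence-form $W^{1,q^*}$ estimate for $\Adiv_{\per}$ bounds $\nabla u$ by $\|f\|_{L^q}$ alone.
\item Uniform interior $W^{2,q^*}$ estimates, summed over a lattice cover, bound $D^2u$ by $\|f\|_{L^{q^*}}+\|\nabla u\|_{L^{q^*}}$.
\end{itemize}
This cleanly separates the large-scale role of $\|f\|_{L^q}$ from the small-scale role of $\|f\|_{L^{q^*}}$. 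It also avoids the non-divergence Calder\'on--Zygmund theorem and the Alessandrini--Nesi diffeomorphism in the existence part. Uniqueness still goes through Theorem~\ref{thm:liouville}, which uses harmonic coordinates; for $t=0$ the Avellaneda--Lin large-scale Lipschitz estimate for $\Adiv_{\per}$ would suffice instead. The price of your route is a weak-to-strong identification, which the paper never needs.

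\textbf{Repairing the identification step.} As written, this step needs two corrections.
\begin{itemize}
\item Lemma~\ref{lem:harmcoord} contains no local uniqueness argument. What you actually need is $\mper L_{\per}v=-\diverg(\Adiv_{\per}\nabla v)$ for every $v\in W^{2,p}_{\loc}$. This follows by testing $\diverg B^{\per}=F^{\per}$ against $\zeta_j=\varphi\,\partial_jv$ and using $B^{\per}:D^2v=0$.
\item You cannot solve the Dirichlet problem in $W^{2,q^*}(B)$ with boundary values $u$, because $u$ is only known to lie in $W^{1,q^*}_{\loc}$. Use instead that $u$ is continuous ($q^*>2$). By \cite[Theorem~9.30]{GT}, take $v\in W^{2,q^*}_{\loc}(B)\cap C(\overline B)$ with $L_{\per}v=f$ in $B$ and $v=u$ on $\partial B$.
\end{itemize}
Since $v$ need not belong to $W^{1,2}(B)$, conclude $u=v$ without invoking $H^1_0$-uniqueness. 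Set $w=u-v$; it solves $-\diverg(\Adiv_{\per}\nabla w)=0$. Test this equation with $(w-\varepsilon)^+$ and $(-w-\varepsilon)^+$, which are compactly supported in $B$, and use $\xi\cdot\Adiv_{\per}\xi=\mper\,\xi\cdot\aper\xi\ge c|\xi|^2$. With this, $u\in W^{2,q^*}_{\loc}$ solves $L_{\per}u=f$ a.e., and the rest of your argument goes through.
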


\begin{proof}
Let \(Y=P(x)\), \(x=\Theta(Y)=P^{-1}(Y)\), and write \(u(x)=w(Y)\). The chain rule gives
\(\nabla_xu=(DP)^T\nabla_Y\,w\) and
\(D_x^2u=(DP)^T\,D_Y^2\,wDP+\sum_\alpha \partial_{Y_\alpha} w\,D^2P_\alpha\). Since
\(L_{\per}P_\alpha=0\), the first-order term disappears and the equation \eqref{perwhole-equation} becomes
\[
 -\haper:D_Y^2w=\widehat f,
 \qquad \widehat f(Y)=f(\Theta(Y)).
\]
The map \(P\) is uniformly bi-Lipschitz with Jacobian bounded above and below; hence
\(\|\widehat f\|_{L^p}(\R^2)\le C\|f\|_{L^p}(\R^2)\) for every \(1\le p\le\infty\). The coefficient \(\haper\) is
periodic, H\"older continuous, symmetric and uniformly elliptic.

For smooth compactly supported \(f\), the scalar non-divergence whole-space estimate of
Avellaneda--Lin \cite[Corollary~3]{AvellanedaLinLp}, based on Theorem~B therein, gives a solution
of the transformed equation, unique modulo affine functions, such that
\[
 \|D_Y^2w\|_{(L^q(\R^2))^{2\times 2}}+\|D_Y^2w\|_{(L^{q^*}(\R^2))^{2\times 2}}
 \le C\|f\|_{(L^q\cap L^{q^*})(\R^2)}.
\]
Choosing the affine representative in the homogeneous Sobolev class and using
\(1/q^*=1/q-1/2\), the homogeneous Sobolev inequality gives
\(\|\nabla_Yw\|_{L^{q^*}}\le C\|D_Y^2w\|_{L^q}\). Pulling the estimate back to \(x\)-variables and
using the displayed chain-rule identity, the boundedness of \(DP,D^2P\), and the bi-Lipschitz
change of variables, yields \eqref{perwhole-est}.

For general \(f\in L^q(\R^2)\cap L^{q^*}(\R^2)\), choose \(f_n\in C_c^\infty\) converging to \(f\) in this
space. The corresponding normalized solutions form a Cauchy sequence in the norm
\(\|\nabla\cdot\|_{(L^{q^*}(\R^2))^{2}}+\|D^2\cdot\|_{(L^{q^*}(\R^2))^{2\times 2}}\). After fixing
the normalization \(|B_1|^{-1}\int_{B_1}u_n=0\), local Poincare inequalities show that the
sequence converges in \(W^{2,q^*}_{\rm loc}\), and its limit gives the desired solution.
Uniqueness modulo constants follows from Theorem~\ref{thm:liouville} with $t=0$. Finally, the
identity \eqref{perwhole-divid} follows for smooth data from the periodic invariant-measure
identity and then by approximation.
\end{proof}

\begin{proof}[Proof of Theorem~\ref{thm:central}]
We employ a continuation argument based on Proposition~2.1 of Blanc, Le Bris and Lions
\cite{BLL}, as corrected in \cite{BLLErratum}. As in their proof, our argument is divided into
three steps. The first two steps follow the corrected proposition almost verbatim. The third step,
however, requires a fundamentally different approach, since the original argument does not extend
to the two-dimensional setting due to the lack of the embedding inequalities (i.e. the proof of
(29) in \cite{BLL}) on which it relies.

Fix $1<q<2$, and let $q^*$ be defined by
\[
 \frac1{q^*}=\frac1q-\frac12.
\]
For $t\in[0,1]$, recall that
\[
 a_t=\aper+t \arme,
 \qquad b_t=\bper+t \brme,
 \qquad L_tu=-a_t:D^2u+b_t\cdot\nabla u.
\]
Set \(p=q^*\), let \(\mathcal Y_q=L^q(\R^2)\cap L^p(\R^2)\), with the sum norm, and introduce the fixed graph space
\[
 \mathcal X_q=\left\{u\in W^{2,p}_{\rm loc}(\R^2)/\R:
 \nabla u,D^2u\in L^p(\R^2),\ L_{\per}u\in\mathcal Y_q\right\},
\]
with norm
\[
 \|u\|_{\mathcal X_q}=\|\nabla u\|_{L^p}+\|D^2u\|_{L^p}
 +\|L_{\per}u\|_{\mathcal Y_q}.
\]
Both \(\mathcal X_q\) and \(\mathcal Y_q\) are Banach spaces. For \(\mathcal X_q\), this follows by
fixing \(|B_1|^{-1}\int_{B_1}u=0\), applying local Poincare inequalities to a Cauchy sequence,
and identifying the distributional limit of \(L_{\per}u_n\). Moreover
\(K=-\arme:D^2+\brme\cdot\nabla\) is bounded from \(\mathcal X_q\) to \(\mathcal Y_q\), and
\(L_t=L_{\per}+tK\).

We denote by $\mathcal P(t)$ the assertion that \(L_t:\mathcal X_q\to\mathcal Y_q\) is bijective.
Whenever \(\mathcal P(t)\) holds, set
\[
 C(t):=\sup_{u\in\mathcal X_q\setminus\{0\}}
 \frac{\|\nabla u\|_{L^p(\R^2)}+\|D^2u\|_{L^p(\R^2)}}
      {\|L_tu\|_{\mathcal Y_q}}<\infty.
\]
Set
\[
 I=\{t\in[0,1]:\mathcal P(s)\text{ holds for every } s\in[0,t]\}.
\]
We prove that $I$ is not empty, open, and closed.

\emph{Step 1: $0\in I$.} Let $f\in L^q(\R^2)\cap L^{q^*}(\R^2)$.  Lemma~\ref{lem:perwhole} gives a solution of
$L_{\per}u=f$, unique modulo constants, and the estimate
\[
 \|\nabla u\|_{(L^{q^*}(\R^2))^2}+\|D^2u\|_{(L^{q^*}(\R^2))^{2\times 2}}
 \leq C_q\|f\|_{(L^q\cap L^{q^*})(\R^2)}.
\]
% For general $f\in L^q\cap L^{q^*}$, take $f_n\in C_c^\infty(\R^2)$ converging to $f$ in both
% norms. Lemma~\ref{lem:perwhole} gives solutions $u_n$, unique modulo constants, and the
% sequence of pairs $(\nabla u_n,D^2u_n)$ is Cauchy in
% $L^{q^*}(\R^2)\times L^{q^*}(\R^2)$. After subtracting constants on bounded balls and using
% Poincare, $u_n$ converges locally in distributions to a function $u$ with
% \[
%  \nabla u,D^2u\in L^{q^*}(\R^2),\qquad L_{\per}u=f.
% \]
% The estimate passes to the limit, and uniqueness modulo constants follows from
% Theorem~\ref{thm:liouville} with $t=0$. 
Therefore $0\in I$.

\emph{Step 2: $I$ is open.} Assume that $\mathcal P(t)$ holds. For $|\tau|$ small, solve
\[
 L_{t+\tau}u=f
\]
by the iteration
\[
 L_tu_{k+1}=f+\tau\bigl(\arme:D^2u_k-\brme\cdot\nabla u_k\bigr),
 \qquad u_0=0.
\]
It remains to verify that the right-hand side belongs to $L^q(\R^2)\cap L^{q^*}(\R^2)$. Since
$a_{ij}^{\rm e},b_i^{\rm e}\in L^M(\R^2)\cap L^\infty(\R^2)$ with $M<2$, we have $a_{ij}^{\rm e},b_i^{\rm e}\in L^2(\R^2)\cap L^\infty(\R^2)$. Hence, using
\[
 \frac1q=\frac12+\frac1{q^*},
\]
H\"older's inequality gives
\[
 \|\arme:D^2u_k\|_{L^q(\R^2)}\leq \|\arme\|_{(L^2(\R^2))^{2\times 2}}\|D^2u_k\|_{(L^{q^*}(\R^2))^{2\times 2}},
\]
and similarly
\[
 \|\brme\cdot\nabla u_k\|_{L^q(\R^2)}\leq \|\brme\|_{(L^2(\R^2))^{2}}\|\nabla u_k\|_{(L^{q^*}(\R^2))^{2}}.
\]
The $L^{q^*}$ part is controlled by the $L^\infty$ norms:
\[
 \|\arme:D^2u_k\|_{L^{q^*}(\R^2)}
 \leq \|\arme\|_{(L^\infty(\R^2))^{2\times 2}}\|D^2u_k\|_{(L^{q^*}(\R^2))^{2\times 2}},
\]
and similarly for $\brme\cdot\nabla u_k$. Thus the perturbation operator
\[
 u\mapsto \arme:D^2u-\brme\cdot\nabla u
\]
is bounded from the solution norm
\[
 \|\nabla u\|_{(L^{q^*}(\R^2))^2}+\|D^2u\|_{(L^{q^*}(\R^2))^{2\times 2}}
\]
to $L^q\cap L^{q^*}$. Since $\mathcal P(t)$ holds, the above iteration is well defined. For
$|\tau|$ small enough, the series $\sum_{k\ge0}(u_{k+1}-u_k)$ converges geometrically in the
solution norm, and hence in \(\mathcal X_q\) by the defining equations and the boundedness of
\(K\). Its limit solves $L_{t+\tau}u=f$, satisfies \eqref{centralestimate}, and uniqueness
modulo constants follows by applying the same contraction argument to the homogeneous equation.
Therefore $I$ is open.

\emph{Step 3: $I$ is closed.} Let $t_n\in I$ and $t_n\to t$, and set
\(C_n:=C(t_n)\). If $C:=\sup_n C_n<\infty$, then for fixed
$f\in L^q(\R^2)\cap L^{q^*}(\R^2)$ the solutions $u_n$ of
\[
 L_{t_n}u_n=f
\]
satisfy
\[
 \|\nabla u_n\|_{(L^{q^*}(\R^2))^2}+\|D^2u_n\|_{(L^{q^*}(\R^2))^{2\times 2}}
 \leq C\|f\|_{(L^q\cap L^{q^*})(\R^2)}.
\]
After subtracting constants, normalize by $|B_1|^{-1}\int_{B_1}u_n=0$. The sequences of elements in
$\nabla u_n$ and $D^2u_n$ are bounded in the global space $L^{q^*}(\R^2)$; after extracting a
subsequence they converge weakly in $L^{q^*}(\R^2)$. Local
Poincare inequality and interior compactness identify these weak limits with $\nabla u$ and $D^2u$ for a
function $u\in W^{2,q^*}_{\loc}(\R^2)$. Since
\[
 L_tu_n=f+(t_n-t)\bigl(\arme:D^2u_n-\brme\cdot\nabla u_n\bigr),
\]
and the last term tends to zero in $L^q(\R^2)\cap L^{q^*}(\R^2)$ by the same estimates used in the openness
step, the weak limit solves $L_tu=f$. Since $L_{\per}u=f-tKu\in\mathcal Y_q$, we have
$u\in\mathcal X_q$. Thus $L_t$ is surjective on the graph space, while
Theorem~\ref{thm:liouville} gives injectivity. The estimate passes to the limit by weak lower
semicontinuity in the global $L^{q^*}$ spaces. Hence $\mathcal P(t)$ holds if the constants
$C_n$ remain bounded. Moreover, for every $s<t$, one has $s<t_n$ for all sufficiently large $n$,
and hence $\mathcal P(s)$ because $t_n\in I$. Thus $t\in I$ whenever $\sup_nC_n<\infty$.

It remains to show that the constants $C_n$ cannot blow up. Arguing by contradiction as in
\cite[Proposition~2.1]{BLL}, with the corrections in \cite{BLLErratum}, suppose that $C_n\to\infty$. By the definition of the best constants,
after normalization there exist $u_n$ and $f_n$ such that
\[
 L_{t_n}u_n=f_n,
 \qquad \|f_n\|_{(L^q\cap L^{q^*})(\R^2)}\to0,
\]
and
\begin{equation}
 \|\nabla u_n\|_{(L^{q^*}(\R^2))^2}+\|D^2u_n\|_{(L^{q^*}(\R^2))^{2\times 2}}=1.
 \label{normalization}
\end{equation}
Rewriting the equation with the limiting operator gives
\[
 L_tu_n=f_n+(t_n-t)\bigl(\arme:D^2u_n-\brme\cdot\nabla u_n\bigr).
\]
Again by the perturbation estimates above, the last term tends to zero in $L^q\cap L^{q^*}(\R^2)$.
Changing the notation of $f_n$, we may therefore assume
\begin{equation}
 L_tu_n=f_n,
 \qquad \|f_n\|_{(L^q\cap L^{q^*})(\R^2)}\to0.
 \label{limitoperatorun}
\end{equation}

We claim that there exist $R>0$ and $\eta>0$ such that, up to a subsequence,
\begin{equation}
 \|\nabla u_n\|_{(L^{q^*}(B_R))^{2}}+\|D^2u_n\|_{(L^{q^*}(B_R))^{2\times2}}\geq\eta.
 \label{nonvanishingclaim}
\end{equation}
Assume the contrary. Then for every $R>0$,
\[
 \|\nabla u_n\|_{(L^{q^*}(B_R))^{2}}+\|D^2u_n\|_{(L^{q^*}(B_R))^{2\times2}}\xrightarrow{n\to\infty}0.
\]
We next show that
\begin{equation}
 \arme:D^2u_n-\brme\cdot\nabla u_n\xrightarrow{n\to\infty}0
 \quad\text{in }L^q(\R^2)\cap L^{q^*}(\R^2).
 \label{defecttermvanishes}
\end{equation}
For the $L^q$ norm, split $\R^2=B_R\cup B_R^c$. In $B_R$, due to the contrary assumption and $a_{ij}^{\rm e},b_i^{\rm e}\in L^M(\R^2)\cap L^\infty(\R^2)$, the local norm above tends to zero.
On $B_R^c$,
\[
 \|\arme:D^2u_n\|_{L^q(B_R^c)}
 \leq \|\arme\|_{(L^2(B_R^c))^{2\times 2}}\|D^2u_n\|_{(L^{q^*}(B_R^c))^{2\times 2}},
\]
Since $\sup_n\|D^2u_n\|_{(L^{q^*}(\R^2))^{2\times 2}}\le 1$ by \eqref{normalization} and
$a_{ij}^{\rm e}\in L^M(\R^2)\cap L^\infty(\R^2)\subset L^2(\R^2)$, this gives
\[
 \lim_{R\to\infty}\sup_n
 \|\arme:D^2u_n\|_{L^q(B_R^c)}=0.
\]
The same argument gives
\[
 \lim_{R\to\infty}\sup_n
 \|\brme\cdot\nabla u_n\|_{L^q(B_R^c)}=0.
\]

For the $L^{q^*}$ norm, due to the contrary assumption and $a_{ij}^{\rm e},b_i^{\rm e}\in L^M(\R^2)\cap L^\infty(\R^2)$, the local part again tends to zero. In $B_R^c$,
\[
 \|\arme:D^2u_n\|_{L^{q^*}(B_R^c)}
 \leq \left(\max\limits_{ij}\sup\limits_{|x|>R}|\arme_{ij}(x)|\right)\,\|D^2u_n\|_{(L^{q^*}(B_R^c))^{2\times 2}}.
\]
The factor $\max\limits_{ij}\sup\limits_{|x|>R}|\arme_{ij}(x)|$ tends to zero as $R\to\infty$. Indeed, if this limit were not
zero, then there would exist $\eps>0$ and points $x_k\to\infty$ such that
$|a_{ij}^{\rm e}(x_k)|\ge\eps$. Uniform H\"older continuity of $\arme$ would give a radius $r>0$, independent of $k$,
such that $|a_{ij}^{\rm e}|\ge\eps/2$ in $B_r(x_k)$. Passing to a disjoint subsequence of these balls would
contradict $a_{ij}^{\rm e}\in L^M(\R^2)$. The same argument applies
to $\brme$. Hence \eqref{defecttermvanishes} follows.

Now
\[
 L_{\per}u_n=L_tu_n+t\bigl(\arme:D^2u_n-\brme\cdot\nabla u_n\bigr)\xrightarrow{n\to\infty}0
 \quad\text{in }L^q\cap L^{q^*}(\R^2).
\]
The periodic estimate from Step~1 then gives
\[
 \|\nabla u_n\|_{(L^{q^*}(\R^2))^{2}}
 +\|D^2u_n\|_{(L^{q^*}(\R^2))^{2\times2}}
 \le C\|L_{\per}u_n\|_{(L^q\cap L^{q^*})(\R^2)}\to0,
\]
which contradicts normalization \eqref{normalization}. Therefore the claim
\eqref{nonvanishingclaim} holds.

Fix $R>0$ and $\eta>0$ given by the claim. Subtract constants so that, for instance,
\[
 \frac1{|B_1|}\int_{B_1}u_n=0.
\]
By Poincare inequality on bounded balls and \eqref{normalization}, the sequence is bounded in
$W^{2,q^*}(B_S)$ for every fixed $S>0$. Since $q^*>2$, after a diagonal extraction there exists
$u\in W^{2,q^*}_{\loc}(\R^2)$ such that
\[
 u_n\rightharpoonup u\quad\text{in }W^{2,q^*}_{\loc}(\R^2),
 \qquad
 u_n\to u\quad\text{in }W^{1,q^*}_{\loc}(\R^2).
 %C^1_{\loc}(\R^2).
\]
At the same time, the global normalization gives boundedness of
$(\nabla u_n,D^2u_n)$ in
$(L^{q^*}(\R^2))^2\times (L^{q^*}(\R^2))^{2\times 2}$. Passing to a further subsequence, if necessary,
\[
 \nabla u_n\rightharpoonup U \quad\text{weakly in }(L^{q^*}(\R^2))^2,
 \qquad D^2u_n\rightharpoonup V
 \quad\text{weakly in }(L^{q^*}(\R^2))^{2\times 2}.
\]
The local convergence identifies $U=\nabla u$ and $V=D^2u$ distributionally. Hence
$\nabla u,D^2u\in L^{q^*}(\R^2)$, with the corresponding weak lower-semicontinuity bound.
Passing to the limit in \eqref{limitoperatorun} gives
\[
 L_tu=0\quad\text{in }\R^2.
\]

We next prove strong convergence on the ball from \eqref{nonvanishingclaim}. Set
$v_n=u_n-u$. Since $L_tu_n=f_n$ and $L_tu=0$, we have
\[
 L_tv_n=f_n\quad\text{in }B_{2R}.
\]
After setting $\Omega'=B_R$, $\Omega=B_{2R}$, $p=q^*$, due to $a_t\in C^{0,\tau}_{\unif}(\R^2)$ and $b_t\in L^\infty(B_{2R})$, it follows from Gilbarg--Trudinger \cite[Theorem 9.11]{GT} that 
\[
 \|v_n\|_{W^{2,q^*}(B_R)}
 \leq C\bigl(\|f_n\|_{L^{q^*}(B_{2R})}+\|v_n\|_{L^{q^*}(B_{2R})}\bigr).
\]

% We use Gilbarg--Trudinger \cite[Theorem 9.11]{GT} in the following form. Let
% $\Omega'\Subset\Omega$, $1<p<\infty$, and
% \[
%  \mathcal Lv=A^{ij}(x)D_{ij}v+\beta^i(x)D_iv
% \]
% with $A$ uniformly elliptic, $A$ uniformly continuous in $\Omega$, and
% $\beta\in L^\infty(\Omega)$. Then every $v\in W^{2,p}(\Omega)$ satisfies
% \[
%  \|v\|_{W^{2,p}(\Omega')}
%  \leq C\bigl(\|\mathcal Lv\|_{L^p(\Omega)}+\|v\|_{L^p(\Omega)}\bigr),
% \]
% where $C$ depends on $p$, $\Omega'$, $\Omega$, the ellipticity constants, the modulus of
% continuity of $A$, and $\|\beta\|_{L^\infty(\Omega)}$, but not on $v$.

% Apply this theorem with $\Omega'=B_R$, $\Omega=B_{2R}$, $p=q^*$, and with
% \[
%  \mathcal L_tv=a_t^{ij}D_{ij}v-b_t^iD_iv=-L_tv.
% \]
% The hypotheses are satisfied because $a_t=\aper+t \arme$ is uniformly elliptic and belongs to
% $C^{0,\tau}_{\unif}$, while $b_t=\bper+t \brme\in L^\infty$. Therefore

The first term tends to zero because $f_n\to0$ in $L^{q^*}(\R^2)$. The second term tends to zero as follows. The local $W^{2,q^*}$ bounds obtained above, together with the normalization by constants and $q^*>2$, give after extraction strong convergence in $L^{q^*}_{\loc}(\R^2)$ by Rellich compactness, in fact locally in $C^1$ by Morrey's embedding. Thus
\[
 \|v_n\|_{W^{2,q^*}(B_R)}\to0,
\]
and consequently
\[
 u_n\to u\quad\text{strongly in }W^{2,q^*}(B_R).
\]
The non-vanishing lower bound \eqref{nonvanishingclaim} passes to the limit:
\[
 \|\nabla u\|_{(L^{q^*}(B_R))^2}+\|D^2u\|_{(L^{q^*}(B_R))^{2\times 2}}\geq\eta.
\]
Hence $u$ is not constant. This contradicts Theorem~\ref{thm:liouville}, since $q^*>2$. Thus
$\sup_n C_n<\infty$, $I$ is closed, and hence $I=[0,1]$. Finally,
\(\sup_{t\in[0,1]}C(t)<\infty\): otherwise compactness of \([0,1]\) would give a convergent
sequence \(t_n\) with \(C(t_n)\to\infty\), contradicting the preceding argument.
\end{proof}

\subsection{Correctors}

\begin{proof}[Proof of Theorem~\ref{thm:correctors}]
Fix $\xi\in\R^2$. We first construct the periodic part. Consider the periodic cell problem
\begin{equation}
 L_{\per}w_{\xi}^{\per}=-\bper\cdot\xi
 \quad\text{on }\T^2,
 \qquad \avg{w_{\xi}^{\per}}=0.
 \label{corrector-periodic-cell}
\end{equation}
According to the periodic centering assumption, we have 
\[
 \int_{\T^2}\mper(x)(-\bper(x)\cdot\xi)\,dx
 =-\xi\cdot\int_{\T^2}\mper\,\bper\,dx=0.
\]
Hence the Fredholm compatibility condition is satisfied. Therefore, the cell problem \eqref{corrector-periodic-cell} admits a unique periodic solution \(w_{\xi}^{\per}\) with zero average. Since
$\aper,\bper\in C^{0,\tau}_{\per}(\R^2)$, periodic Schauder theory, based on \cite[Ch.~6, Theorem~6.2]{GT} and the Fredholm alternative \cite[Ch.~5, Theorem~5.3]{GT} gives
\[
 w_{\xi}^{\per}\in C^{2,\tau'}_{\per}(\R^2)
\]
for some $\tau'\in(0,\tau]$. In particular,
\begin{equation}
 \nabla w_{\xi}^{\per} \in (L^\infty(\R^2))^2,\quad\ D^2w_{\xi}^{\per}\in (L^\infty(\R^2))^{2\times 2}.
 \label{periodic-corrector-bounded}
\end{equation}
Define
\begin{equation}
 f_\xi:=\arme:D^2w_{\xi}^{\per}
 -\brme\cdot(\xi+\nabla w_{\xi}^{\per}).
 \label{defect-corrector-source}
\end{equation}
By \eqref{periodic-corrector-bounded} and by $a_{ij}^{\rm e},b_j^{\rm e}\in L^M(\R^2)\cap L^\infty(\R^2)$, we have
\[
 f_\xi\in L^M(\R^2)\cap L^\infty(\R^2).
\]
For $M^*>M$ and any $g\in L^M(\R^2)\cap L^\infty(\R^2)$, we have
\[
 \|g\|_{L^{M^*}(\R^2)}^{M^*}
 =\int_{\R^2}|g|^{M^*}
 \leq \|g\|_{L^\infty(\R^2)}^{M^*-M}\int_{\R^2}|g|^M=\|g\|_{L^\infty(\R^2)}^{M^*-M}\|g\|_{L^M(\R^2)}^{M}<\infty,
\]
and $g\in L^{M^*}(\R^2)$.
Therefore,
\[
 L^M(\R^2)\cap L^\infty(\R^2)\subset L^{M^*}(\R^2)\quad\text{for}\quad M^*>M,
\]
and
\begin{equation}
 f_\xi\in L^M(\R^2)\cap L^{M^*}(\R^2).
 \label{fxi-integrability}
\end{equation}

Apply Theorem~\ref{thm:central} with $q=M$. Since
\[
 \frac1{M^*}=\frac1M-\frac12,
\]
Theorem~\ref{thm:central} gives a solution $w^{\e}_\xi$, unique modulo constants, of
\begin{equation}
 L_1w^{\e}_\xi=f_\xi\quad\text{in }\R^2,
 \label{defect-corrector-equation}
\end{equation}
such that
\begin{equation}
 \nabla w^{\e}_\xi\in(L^{M^*}(\R^2))^2,\,\quad\ D^2w^{\e}_\xi\in (L^{M^*}(\R^2))^{2\times 2}.
 \label{defect-corrector-estimate}
\end{equation}
Fix the additive constant by choosing the continuous representative and imposing, for instance,
\begin{equation}
 w^{\e}_\xi(0)=0.
 \label{defect-corrector-normalization}
\end{equation}
Set
\[
 w_\xi=w_{\xi}^{\per}+w^{\e}_\xi.
\]
We now verify the corrector equation. Since
\[
 L_1=L_{\per}-\arme:D^2+\brme\cdot\nabla,
\]
we have
\[
\begin{aligned}
 L_1w_{\xi}^{\per}
 &=L_{\per}w_{\xi}^{\per}-\arme:D^2w_{\xi}^{\per}+\brme\cdot\nabla w_{\xi}^{\per} \\
 &=-\bper\cdot\xi-\arme:D^2w_{\xi}^{\per}+\brme\cdot\nabla w_{\xi}^{\per}.
\end{aligned}
\]
Using \eqref{defect-corrector-equation} and the definition \eqref{defect-corrector-source} of
$f_\xi$, we obtain
\[
\begin{aligned}
 L_1w_\xi
 &=L_1w_{\xi}^{\per}+L_1w^{\e}_\xi \\
 &=-\bper\cdot\xi-\arme:D^2w_{\xi}^{\per}+\brme\cdot\nabla w_{\xi}^{\per}
   +\arme:D^2w_{\xi}^{\per}-\brme\cdot(\xi+\nabla w_{\xi}^{\per}) \\
 &=-\bper\cdot\xi-\brme\cdot\xi=-b\cdot\xi .
\end{aligned}
\]
Therefore
\begin{equation}
 L_1w_\xi=-b\cdot\xi\quad\text{in }\R^2.
 \label{full-corrector-equation}
\end{equation}
It remains to prove the sublinearity of $w^{\e}_\xi$. Since $M^*>2$, Morrey--Poincare gives, for
every $R\geq1$,
\[
 \osc_{B_R} w^{\e}_\xi
 \leq CR^{1-2/M^*}\|\nabla w^{\e}_\xi\|_{(L^{M^*}(B_R))^2}.
\]
Using the global bound $\nabla w^{\e}_\xi\in (L^{M^*}(\R^2))^2$, we get
\begin{equation}
 \osc_{B_R}w^{\e}_\xi
 \leq CR^{1-2/M^*}\|\nabla w^{\e}_\xi\|_{(L^{M^*}(B_R))^2}.
 \label{defect-corrector-growth}
\end{equation}
By the normalization $w^{\e}_\xi(0)=0$, for $x\in B_R$,
\[
 |w^{\e}_\xi(x)|\leq \osc_{B_R} w^{\e}_\xi.
\]
Taking $R=1+|x|$ in \eqref{defect-corrector-growth}, we obtain
\[
 |w^{\e}_\xi(x)|
 \leq C(1+|x|)^{1-2/M^*}\|\nabla w^{\e}_\xi\|_{(L^{M^*}(B_R))^2}.
\]
Consequently,
\[
 \frac{|w^{\e}_\xi(x)|}{1+|x|}
 \leq C(1+|x|)^{-2/M^*}\|\nabla w^{\e}_\xi\|_{(L^{M^*}(B_R))^2}\to0
 \quad\text{as }|x|\to\infty.
\]
Hence
\begin{equation}
 \frac{w^{\e}_\xi(x)}{1+|x|}\to0
 \quad\text{as }|x|\to\infty.
 \label{defect-corrector-sublinear}
\end{equation}
Combining \eqref{defect-corrector-estimate}, \eqref{full-corrector-equation}, and
\eqref{defect-corrector-sublinear}, we obtain the claimed decomposition
\[
 w_\xi=w_{\xi}^{\per}+w^{\e}_\xi,
\]
with
\[
 L_1w_\xi=-b\cdot\xi,\,
 \quad \nabla w^{\e}_\xi\in(L^{M^*}(\R^2))^2,\,\quad\ D^2w^{\e}_\xi\in (L^{M^*}(\R^2))^{2\times 2},
 \qquad
 \frac{w^{\e}_\xi(x)}{1+|x|}\xrightarrow{|x|\to\infty}0.
\]
This proves Theorem~\ref{thm:correctors}.
\end{proof}

\section{The invariant measure}

This section proves Theorem~\ref{thm:inv}. The construction follows the duality argument of Blanc--Le Bris--Lions \cite[Section~3]{BLL}, but the adjoint estimates are written in a quantitative form suited to the two-dimensional proof.

\begin{lemma}[Adjoint local estimates]\label{lem:adjlocal}
Let \(\mu\in L^1(B_2)\) be a signed density satisfying
\begin{equation}\label{adjointlocal}
 -\partial_i\bigl(\partial_j(a_{ij}\mu)+b_i\mu\bigr)=0
 \quad\text{in }\mathcal D'(B_2),
\end{equation}
where \(a=a^T\) is uniformly elliptic and \(a_{ij},b_i\in C^{0,\tau}(B_2)\cap L^\infty(B_2)\). Then there exist \(\theta\in(0,1)\) and \(C\) such that
\begin{equation}\label{adjlocalestimate}
 \|\mu\|_{L^\infty(B_1)}+[\mu]_{C^{0,\theta}(B_1)}
 \le C\|\mu\|_{L^1(B_2)}.
\end{equation}
If \(\mu\ge0\) and \(\mu\not\equiv0\), its continuous representative is strictly positive in the interior.
\end{lemma}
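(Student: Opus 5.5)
The plan is to reduce the statement to the local theory of double-divergence (stationary Fokker--Planck) equations of Bogachev--Shaposhnikov \cite{BogachevShaposhnikov}, which is already used in Lemma~\ref{lem:per-density}. The only extra work is to make the constants quantitative with the $L^1(B_2)$ norm on the right-hand side of \eqref{adjlocalestimate}. Write \eqref{adjointlocal} as $\partial_{ij}(a_{ij}\mu)+\partial_i(b_i\mu)=0$. This is exactly the class treated there, with $a$ symmetric, uniformly elliptic and H\"older continuous, and $b$ bounded.

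\emph{Step 1: higher integrability and local boundedness.} By \cite[Theorem~2.1]{BogachevShaposhnikov}, every $L^1$ solution with H\"older (in fact Dini) continuous diffusion matrix has a representative in $L^p_{\loc}$ for every $p<\infty$, and is locally bounded once $b\in L^\infty$. To see the scaling of the constants, I would also run the standard duality argument directly. Take a cutoff $\zeta\in C^\infty_c(B_\rho)$ with $\zeta=1$ on $B_r$, for $1\le r<\rho\le2$. Then $\zeta\mu$ satisfies a double-divergence equation whose right-hand side involves only $\mu$, $\nabla\zeta$ and $D^2\zeta$, in divergence and zero-order form. Testing against $v$ solving the Dirichlet problem $-a:D^2v+b\cdot\nabla v=g$ in $B_\rho$, and using the $W^{2,p'}$ estimate \cite[Theorem~9.15]{GT} together with the Sobolev/Morrey embedding of $\nabla v$, gives
\[
\|\mu\|_{L^{p_1}(B_r)}\le C(\rho-r)^{-k}\|\mu\|_{L^{p_0}(B_\rho)}
\]
for an improved exponent $p_1>p_0$. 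A finite iteration then reaches $L^\infty(B_{3/2})$.

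\emph{Step 2: reduction to the $L^1$ norm.} This is the step I expect to be the main obstacle, because duality naturally yields a bound in terms of $\|\mu\|_{L^{p_0}}$ with $p_0>1$ rather than $\|\mu\|_{L^1}$. I would use the usual absorption device. Interpolation gives
\[
\|\mu\|_{L^{p_0}(B_\rho)}\le\|\mu\|_{L^\infty(B_\rho)}^{1-1/p_0}\|\mu\|_{L^1(B_\rho)}^{1/p_0}.
\]
Young's inequality then gives
\[
\|\mu\|_{L^\infty(B_r)}\le\tfrac12\|\mu\|_{L^\infty(B_\rho)}+C(\rho-r)^{-k'}\|\mu\|_{L^1(B_2)}.
\]
The standard iteration lemma over radii $1\le r<\rho\le3/2$ removes the first term. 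The hardest point is to check that the constants in Step~1 depend polynomially on $(\rho-r)^{-1}$ and only on $\lambda,\Lambda$, the H\"older norms of $a$ and $\|b\|_\infty$. The $W^{2,p}$ constants are uniform on balls of radius in $[1,2]$, so this should hold.

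\emph{Step 3: H\"older regularity and positivity.} With $\mu\in L^\infty(B_{3/2})$ in hand, \cite[Theorem~3.1]{BogachevShaposhnikov} gives $\mu\in C^{0,\theta}(B_1)$ for some $\theta\in(0,\tau]$, with seminorm bounded by $C\|\mu\|_{L^\infty(B_{3/2})}$. Combined with Step~2, this yields \eqref{adjlocalestimate}. For the last assertion, if $\mu\ge0$ and $\mu\not\equiv0$, the Harnack inequality \cite[Corollary~3.6]{BogachevShaposhnikov} shows the continuous representative is strictly positive on every compact subset of $B_2$. Alternatively, Bauman's adjoint Harnack principle \cite{Bauman} gives the same conclusion, since the drift can be absorbed once $\mu$ is known to be continuous.
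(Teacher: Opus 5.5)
\textbf{Gap in Step 1.} The duality bootstrap you describe cannot reach $L^\infty$, so the route breaks down at the end of Step~1. To bound $\|\zeta\mu\|_{L^{p_1}(B_r)}$ you test against $g\in L^{p_1'}$ and need $\|v\|_{W^{2,p_1'}(B_\rho)}\le C\|g\|_{L^{p_1'}}$. For $p_1=\infty$ this is a $W^{2,1}$ estimate, which is false; Gilbarg--Trudinger, Theorem~9.15, requires $1<p<\infty$. The iteration therefore gives $\|\mu\|_{L^p(B_{3/2})}\le C_p\|\mu\|_{L^1(B_2)}$ for every finite $p$, but never a sup bound. Two things are left unproved as a result. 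Step~2 starts from $\|\mu\|_{L^\infty(B_r)}\le C(\rho-r)^{-k}\|\mu\|_{L^{p_0}(B_\rho)}$, and Step~3 uses $\|\mu\|_{L^\infty(B_{3/2})}$ as its input. Qualitative local boundedness quoted from Bogachev--Shaposhnikov does not supply these constants, as you note yourself.

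\textbf{How the paper closes it.} Stop the bootstrap at a finite $p>\max\{2,2/\tau\}$ and prove the H\"older bound directly in terms of $\|\mu\|_{L^p}$. The paper uses the Bogachev--Shaposhnikov representation formula with the frozen-coefficient logarithmic fundamental solution. There the drift term is a first-order potential of $b\mu\in L^p$, hence $C^{0,\alpha}$ for $\alpha<1-2/p$. The coefficient-oscillation term has kernel $O(|x-z|^{-2+\tau})$, giving $C^{0,\theta}$ for $\theta<\tau-2/p$. The sup bound on $B_1$ then follows from $|\mu(x)|\le|B_1|^{-1}\|\mu\|_{L^1(B_1)}+2^\theta[\mu]_{C^{0,\theta}(B_1)}$, with no absorption needed. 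Alternatively, you could reach $L^\infty$ by duality by replacing the $W^{2,1}$ estimate with Green-function gradient bounds $\|\nabla v\|_{L^s(B_\rho)}\le C\|g\|_{L^1}$, $s<2$; that is an extra input.

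\textbf{Step 2 is unnecessary.} In the first duality step take $p_1<2$, so $p_1'>2$ and Morrey gives $\|v\|_{L^\infty}+\|\nabla v\|_{L^\infty}\le C\|g\|_{L^{p_1'}}$. The commutator terms $\int\mu\,(2a\nabla\zeta\cdot\nabla v+v\,a:D^2\zeta-v\,b\cdot\nabla\zeta)$ are then bounded by $\|\mu\|_{L^1(B_\rho)}$ directly. This is legitimate for $\mu\in L^1$: for $g\in C_c^\infty$, $v\in C^{2,\tau}_{\mathrm{loc}}$, so $\zeta v$ is an admissible test function. This is exactly the Bogachev--Krylov--R\"ockner initial estimate the paper quotes.

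\textbf{Positivity.} Your positivity argument via the Bogachev--Shaposhnikov Harnack inequality matches the paper.
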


\begin{proof}
Multiplying \eqref{adjointlocal} by \(-1\) gives the adjoint Fokker--Planck equation
\(L^*_{a,-b,0}(\mu dx)=0\). Since \(a\in C^{0,\tau}\), it is VMO and Dini. Since \(b_i\in L^\infty(B_2)\), we have
\(b_i\in L^{q_0}(B_2)\) for every \(q_0>2\).  The initial density estimate of
Bogachev--Krylov--R\"ockner \cite[Theorem~2.1(ii), Corollary~2.2 and Remark~2.4]{BKR2001} gives,
for \(B_{15/8}\Subset B_2\) and every \(1<p_0<2\),
\[
 \|\mu\|_{L^{p_0}(B_{15/8})}\le C_{p_0}\|\mu\|_{L^1(B_2)}.
\]
The factor \(C_{p_0}\) appearing in the estimate depends linearly on
\(1+\|b\|_{(L^{q_0}(B_2))^2}\). Bogachev--Shaposhnikov's local bootstrap
\cite[Theorem~2.1]{BogachevShaposhnikov} raises exponents according to
\(1/p_1=1/p_0+1/q_0-1/2\). Since the last two terms have negative sum for \(q_0>2\), a finite
iteration over nested balls gives
\[
 \|\mu\|_{L^p(B_{7/4})}\le C_p\|\mu\|_{L^1(B_2)},
 \qquad 1<p<\infty.
\]
Choose \(p>\max\{2,2/\tau\}\). The representation formula of Bogachev--Shaposhnikov
\cite[Proposition~3.2, Eq.~(3.3)]{BogachevShaposhnikov}, with the two-dimensional logarithmic
frozen-coefficient fundamental solution, writes \(\mu\) on \(B_1\) as a sum of potential terms. The
cut-off terms are smooth. The drift term is a first-order potential of \(b\mu\), hence is
\(C^{0,\alpha}\) for every \(\alpha<1-2/p\). The coefficient-oscillation term has kernel bounded by
\(|x-z|^{-2+\tau}\); the near-field/far-field estimate in the proof of
\cite[Theorem~3.1]{BogachevShaposhnikov}, equivalently the potential estimates in
\cite[Ch.~7, Lemma~7.12 and Theorem~7.19]{GT}, gives H\"older control for every
\(\theta<\tau-2/p\). Taking \(\theta<\min\{1-2/p,\tau-2/p\}\) gives \eqref{adjlocalestimate}. The
positivity assertion follows from \cite[Corollary~3.6]{BogachevShaposhnikov}.
\end{proof}

\begin{proof}[Proof of Theorem~\ref{thm:inv}]
Let \(X=L^q(\mathbb R^2)\cap L^{q^*}(\mathbb R^2)\), where \(q=(M^*)'\) and \(q^*=M'\). For \(f\in C_c^\infty(\mathbb R^2)\), let \(u_f\) be the solution of \(L_1u_f=f\) given by the central estimate\eqref{centralestimate}. Define
\[
\Lambda(f)
=
\int_{\mathbb R^2}
m^{\rm per}
\left(\arme:D^2u_f-\brme\cdot\nabla u_f\right).
\]
Since \(\arme,\brme\in L^M\) and \(M'=q^*\), H\"older's inequality and the central estimate \eqref{centralestimate} yield
\[
|\Lambda(f)|
\le
C\left(
\|D^2u_f\|_{L^{q^*}}
+
\|\nabla u_f\|_{L^{q^*}}
\right)
\le
C\|f\|_X.
\]
Hence \(\Lambda\) extends uniquely to a bounded linear functional on \(X\). By \(X^*=L^{q'}+L^{(q^*)'}=L^{M^*}+L^M\), there exists \(m^{\e}\in L^{M^*}+L^M\) such that
\[
\int_{\mathbb R^2}m^{\e} f=\Lambda(f)
\]
for all \(f\in X\). Taking \(f=L_1\varphi\), with \(\varphi\in C_c^\infty(\mathbb R^2)\), and using uniqueness modulo constants for \(L_1u=f\), we obtain \(u_f=\varphi+\mathrm{const}\). Hence
\[
\int_{\mathbb R^2}m^{\e}L_1\varphi
=
\int_{\mathbb R^2}
m^{\rm per}
\left(\arme:D^2\varphi-\brme\cdot\nabla\varphi\right).
\]
Therefore
\[
\int_{\mathbb R^2}(m^{\rm per}+m^{\e})L_1\varphi
=
\int_{\mathbb R^2}m^{\rm per}L_{\rm per}\varphi
=
0.
\]
Thus \(m=m^{\rm per}+m^{\e}\) satisfies the full adjoint equation.

Lemma~\ref{lem:adjlocal} gives, uniformly in \(x\),
\begin{equation}\label{munifholder}
 \|m\|_{L^\infty(B_1(x))}+[m]_{C^{0,\theta}(B_1(x))}
 \le C\|m\|_{L^1(B_2(x))}.
\end{equation}
The right side is uniformly bounded since \(\mper\in L^\infty(\R^2)\) and \(m^{\e}\in L^{M^*}+L^M\). Thus \(m^{\e}\in L^\infty\cap C^{0,\theta}_{\unif}\). If \(m^{\e}=g+h\), with \(g\in L^{M^*}\) and \(h\in L^M\), then on \(\{|m^{\e}|\le2|g|\}\) the \(L^{M^*}\) bound follows from \(g\), while on the complementary set \(|m^{\e}|\le2|h|\) and boundedness gives \(|m^{\e}|^{M^*}\le C|h|^M\). Hence \(m^{\e}\in L^{M^*}\). Uniform H\"older continuity and \(L^{M^*}\) integrability implies \(m^{\e}(x)\to0\) uniformly as \(|x|\to\infty\).

Since \(\mper\ge c_{\per}>0\), for large \(R\) one has \(m\ge c_{\per}/2\) on
\(\R^2\setminus B_R\). Set \(\mathcal L=-L_1=a:D^2-b\cdot\nabla\), so that
\(\mathcal L^*m=0\). Choose a ball \(\mathcal B\) with
\(\overline{B_R}\Subset\mathcal B\), a pole \(A_0\in\mathcal B\setminus\overline{B_R}\), and let
\(g(y)=G^{\mathcal L}_{\mathcal B}(A_0,y)\) be Bauman's positive normalizing Green function. Since the pole
lies outside \(\overline{B_R}\), \(g\) is bounded above and below by positive constants there.
Thus \(-m/g\) is a normalized adjoint solution in \(B_R\) with nonpositive boundary values.
Bauman's maximum principle \cite[Proposition~4.1 and Theorem~4.2]{Bauman} gives \(-m/g\le0\), and hence
\(m\ge0\) in \(B_R\). Since \(m\ge0\) on \(\R^2\) and \(m\not\equiv0\), applying the positivity
assertion of Lemma~\ref{lem:adjlocal} along a finite chain of overlapping balls gives \(m>0\)
everywhere. The positive lower bound outside
\(B_R\) and compactness of \(\overline{B_R}\) yield \(\inf_{\R^2}m>0\).
\end{proof}

\begin{corollary}[Uniqueness in the decaying class]\label{cor:unique}
The invariant density is unique among \(m=\mper+m^{\e}\) with \(m^{\e}\in L^{M^*}(\R^2)\cap L^\infty(\R^2)\) and \(\sup_{|x|\ge R}|m^{\e}(x)|\to0\).
\end{corollary}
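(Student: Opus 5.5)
Let \(m_1=\mper+m^{\e}_1\) and \(m_2=\mper+m^{\e}_2\) be two invariant densities in the stated class, and set \(\mu=m_1-m_2=m^{\e}_1-m^{\e}_2\). Then \(\mu\in L^{M^*}(\R^2)\cap L^\infty(\R^2)\), \(\mu\) tends to zero uniformly at infinity, and \(\mu\) solves the full adjoint equation \eqref{adjointfull} in \(\mathcal D'(\R^2)\). The plan is to show \(\mu\equiv0\) by duality against the central estimate, Theorem~\ref{thm:central}. Concretely, I will prove that \(\int\mu f=0\) for every \(f\in C_c^\infty(\R^2)\).

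Fix \(f\in C_c^\infty\) and let \(u=u_f\) solve \(L_1u=f\) with \(\nabla u,D^2u\in L^{p}(\R^2)\). Here I choose \(q\) so that the pairing with \(\mu\) makes sense. Take \(q\in(1,2)\) with \(q^*=(M^*)'\); this requires \((M^*)'>2\), which fails since \(M^*>2\). So instead I use the exponent freedom: \(\mu\in L^{M^*}\cap L^\infty\subset L^{\rho}\) for every \(\rho\ge M^*\), and I pick any \(q\in(1,2)\), so \(q^*\in(2,\infty)\). The adjoint equation tested against \(u\) formally gives
\[
 \int\mu f=\int\mu\,L_1u=0 .
\]
The difficulty is that \(u\) is not compactly supported: it grows, since only \(\nabla u\) and \(D^2u\) are integrable. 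I therefore test with \(\varphi_R=\eta_R(u-c_R)\), where \(\eta_R(x)=\eta(x/R)\) is a cutoff and \(c_R\) is the average of \(u\) on the annulus \(B_{2R}\setminus B_R\). Expanding \(L_1\varphi_R=\eta_R f+(\text{commutator})\), the commutator consists of the terms
\[
 -2a\nabla\eta_R\cdot\nabla u,\qquad -(u-c_R)\,a:D^2\eta_R,\qquad (u-c_R)\,b\cdot\nabla\eta_R,
\]
all supported in the annulus \(A_R\).

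The main step is to show that the pairing of \(\mu\) with each commutator term tends to zero. On \(A_R\), Poincaré--Morrey gives \(|u-c_R|\le CR^{1-2/q^*}\|\nabla u\|_{L^{q^*}(A_R')}\) for a slightly larger annulus \(A_R'\). The gradient terms are bounded by \(R^{-1}\int_{A_R}|\mu||\nabla u|\le R^{-1}\|\mu\|_{L^{(q^*)'}(A_R)}\|\nabla u\|_{L^{q^*}(A_R)}\).

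The worst term is the drift term \(R^{-1}|u-c_R||b|\), which is not small in the periodic part. I expect this to be the main obstacle. Using only \(\mu\to0\) uniformly, I get
\[
 \int_{A_R}|\mu|R^{-1}|u-c_R|\le \sup_{A_R}|\mu|\cdot R\cdot R^{1-2/q^*}\,o(1),
\]
which is not obviously bounded. To fix this I use the integrability of \(\mu\) with Hölder instead: the term is at most \(R^{-1}\cdot R^{1-2/q^*}\|\nabla u\|_{L^{q^*}(A_R')}\|\mu\|_{L^{1}(A_R)}\), and \(\|\mu\|_{L^1(A_R)}\le \|\mu\|_{L^{M^*}(A_R)}R^{2(1-1/M^*)}\). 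The net power is \(2-2/q^*-2/M^*\) times \(o(1)\), and this vanishes as \(R\to\infty\) provided \(1/q^*+1/M^*\ge1\). That condition is impossible since both exponents exceed \(2\).

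Hence the naive cutoff fails and one must use the structure. First, I would rewrite the equation for \(u\) in divergence form via Theorem~\ref{thm:divred} applied to \(m_2\), namely \(m_2L_1u=-\diverg(A\nabla u)\). Second, and more cleanly, I would pass to the ratio \(h=\mu/m_2\). This \(h\) is bounded and decays, and it satisfies the divergence-form equation
\[
 \diverg\bigl(A^T\nabla h\bigr)=0 ,
\]
the standard Doob-transform fact, verified by pairing with \(m_2L_1\varphi=-\diverg(A\nabla\varphi)\). Here \(A^T=m_2a+B\) is coercive by \(\inf m_2>0\). The bounded \(h\) is a weak \(H^1_{\loc}\) solution of a uniformly elliptic divergence-form equation in the plane. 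Its local regularity comes from Lemma~\ref{lem:adjlocal} together with De Giorgi--Nash--Moser, so the maximum principle applies on large balls. Since \(h\to0\) uniformly at infinity, the weak maximum principle on \(B_R\) gives \(\sup_{B_R}|h|\le\sup_{\partial B_R}|h|\to0\). Hence \(h\equiv0\) and \(\mu\equiv0\).

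The step I regard as delicate is justifying the weak formulation of the \(h\)-equation with \(h\in H^1_{\loc}\), given that \(\mu\) is only Hölder. I would obtain it by mollifying, or by testing the adjoint equation with \(\varphi/m_2\)-type functions. If needed, I would fall back on Bauman's normalized adjoint maximum principle, exactly as in the positivity proof of Theorem~\ref{thm:inv}: apply it to \(\pm\mu/g\) on large balls \(B_R\), with boundary values bounded by \(\sup_{\partial B_R}|\mu|/\inf g\to0\).
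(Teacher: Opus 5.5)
Your key idea is right and matches the paper's: divide the difference $\mu$ by a positive invariant density that is bounded above and below on all of $\R^2$, then apply a maximum principle on large balls whose boundary data tend to zero. The paper does this in adjoint form. It sets $\rho_\pm=\frac{\eps}{c_1}m^{(1)}\pm\widetilde m$, which is $\ge0$ on $\partial B_{R_\eps}$, and applies Bauman's principle to $-\rho_\pm/g_{R_\eps}$ to get $|\widetilde m|\le\eps\, m^{(1)}/c_1$ in $B_{R_\eps}$. Only the \emph{sign} of $\rho_\pm$ has to pass through the Green weight. Neither of your two executions is complete, however.

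In the Doob-transform route, pairing with $m_2L\varphi=-\diverg(A\nabla\varphi)$ only shows $\int h\,\diverg(A\nabla\varphi)=0$. So $h=\mu/m_2$ is a very weak solution, whereas the weak maximum principle needs $h\in H^1_{\loc}$, and none of your justifications supplies it:
Lemma~\ref{lem:adjlocal} gives only H\"older continuity of $\mu$ and $m_2$. Adjoint solutions need not be $H^1_{\loc}$: for $b=0$ and $a=\mathrm{diag}(\alpha(x_1),1)$ with $\alpha$ merely H\"older, $1/\alpha(x_1)$ is one.
De Giorgi--Nash--Moser presupposes $H^1$.
Mollifying $h$ creates commutators with the merely H\"older $A$ that you cannot control.
$\varphi/m_2$ is not an admissible $C^2$ test function.

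The route can be closed. Solve $\diverg(A^T\nabla\tilde h)=0$ in $H^1(B_R)$ with data $h|_{\partial B_R}$, check that $\tilde h m_2$ is an adjoint solution, and prove $\tilde h m_2=\mu$ by uniqueness for the adjoint Dirichlet problem. But that uniqueness is itself an adjoint maximum principle, so the detour through Theorem~\ref{thm:divred} buys nothing. Note also that $\inf m_2>0$ is not part of the class in the statement. Either take $m_2$ to be the density of Theorem~\ref{thm:inv}, or rerun its positivity argument as the paper does.

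Your fallback fails as written. Bauman's principle for $\pm\mu/g_R$ on $B_R$ yields $|\mu(x)|\le g_R(x)\sup_{\partial B_R}|\mu|/\inf_{\partial B_R}g_R$. Here $g_R$ changes with $R$ (the ambient ball must contain $\overline{B_R}$ and the pole must lie outside it) and is only defined up to a positive multiple. So ``$\sup_{\partial B_R}|\mu|/\inf g\to0$'' has no content. What you would actually need is $\sup_R g_R(x)/\inf_{\partial B_R}g_R<\infty$, which is a Harnack inequality for positive adjoint solutions at scale $R$. Local Harnack constants degenerate because the rescaled drift has size $R\|b\|_{L^\infty}$, and no large-scale version is available in the paper. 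The positivity proof of Theorem~\ref{thm:inv}, which you cite, uses only signs and never needs such a ratio.

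The repair is to use $m_2$ itself as the barrier. Apply Bauman's principle to $(\pm\mu-\eps_R m_2)/g_R$ with $\eps_R=\sup_{\partial B_R}|\mu|/\inf m_2$. This quotient is $\le0$ on $\partial B_R$, hence in $B_R$, so $|\mu|\le\eps_R m_2\le\eps_R\sup m_2\to0$. That is exactly the paper's proof, and it makes the Doob transform unnecessary.
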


\begin{proof}
If \(m^{(1)}\) and \(m^{(2)}\) are two such densities, set \(\widetilde{m}=m^{(2)}-m^{(1)}\). Then \(\widetilde{m}\) is an adjoint solution, and \(\widetilde{m}\to0\) uniformly at infinity. Applying the preceding positivity argument to \(m^{(1)}\) gives \(0<c_1\le m^{(1)}\le C_1\). For each \(\eps>0\), choose \(R_\eps\ge\eps^{-1}\) so large that \(|\widetilde{m}|\le\eps\) on \(\partial B_{R_\eps}\). The adjoint solutions
\[
 \rho_\pm=\frac{\eps}{c_1}m^{(1)}\pm \widetilde{m}
\]
are nonnegative on \(\partial B_{R_\eps}\). Choose an ambient ball, an exterior pole and the positive
Green weight \(g_{R_\eps}\) as above. Bauman's maximum principle \cite[Proposition~4.1 and Theorem~4.2]{Bauman}, applied
to \(-\rho_\pm/g_{R_\eps}\), gives \(\rho_\pm\ge0\) in \(B_{R_\eps}\). Every fixed point belongs to
\(B_{R_\eps}\) for all sufficiently small \(\eps\); letting \(\eps\downarrow0\) gives
\(\widetilde{m}=0\).
\end{proof}

\section{Divergence-form reduction}\label{sec:reduction}
\subsection{A two-dimensional Hodge estimate}\label{sec:aux-tools}

\begin{lemma}[Two-dimensional stream-function estimate]\label{lem:hodge}
Let \(1<\eta<2<\gamma<\infty\), let \(\theta\in(0,1)\), and set
\(\eta^*=2\eta/(2-\eta)\). Let
\[
 F=H+\diverg Q,
 \qquad \diverg F=0,
\]
where
\[
 H\in (L^\eta(\R^2))^2\cap (L^\gamma(\R^2))^2,
 \qquad
 Q\in (L^{\eta^*}(\R^2))^{2\times2}\cap (L^\infty(\R^2))^{2\times2}\cap (C^{0,\theta}_{\unif}(\R^2))^{2\times2}.
\]
Then there is a skew-symmetric matrix field \(B=\psi R\), with
\(R=\begin{pmatrix}0&1\\ -1&0\end{pmatrix}\), such that \(\diverg B=F\) in \(\mathcal D'(\R^2)\) and
\begin{equation}\label{hodge-estimate}
 \|B\|_{((L^{\eta^*}\cap L^\infty)(\R^2))^{2\times 2}}
 \le C\Big(\|H\|_{((L^\eta\cap L^\gamma)(\R^2))^2}+\|Q\|_{((L^{\eta^*}\cap L^\infty)(\R^2))^{2\times 2}}
       +[Q]_{C^{0,\theta}_{\unif}}\Big).
\end{equation}
Moreover, for every \(0<\beta<\min\{\theta,1-2/\gamma\}\),
\begin{equation}\label{hodge-holder-estimate}
 [B]_{C^{0,\beta}_{\unif}}
 \le C_\beta\Big(\|H\|_{((L^\eta\cap L^\gamma)(\R^2))^2}
 +\|Q\|_{((L^{\eta^*}\cap L^\infty)(\R^2))^{2\times 2}}
 +[Q]_{C^{0,\theta}_{\unif}}\Big).
\end{equation}
\end{lemma}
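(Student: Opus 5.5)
We construct \(\psi\) explicitly from \(F\), by a Fourier formula like the one in Lemma~\ref{lem:per-skew}, and estimate the \(H\)-part and the \(Q\)-part separately.

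\textbf{The equation for \(\psi\).} With \(B=\psi R\) one has \((\diverg B)_j=\partial_i(\psi R_{ij})\), so \(\diverg B=(-\partial_2\psi,\partial_1\psi)\). Since \(\diverg F=0\), the required \(\psi\) solves \(\Delta\psi=\partial_1F_2-\partial_2F_1=\curl F\). We therefore set
\[
 \psi=\psi_H+\psi_Q,\qquad \psi_H=\nabla^\perp\Delta^{-1}\cdot H,\qquad \psi_Q=\partial_k\partial_i\Delta^{-1}\text{-type combination of }Q .
\]
Concretely, \(\psi_H=\Gamma*(\partial_1H_2-\partial_2H_1)\) with \(\Gamma=\frac1{2\pi}\log|x|\), written as a first-order potential \(\psi_H=\nabla\Gamma*(\text{rotated }H)\). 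Likewise \(\psi_Q=\sum T_{ijk}Q_{ij}\), where the \(T_{ijk}\) have symbol \(-\xi_k\xi_i/|\xi|^2\), up to sign, so they are zero-order Calder\'on--Zygmund operators (compositions of Riesz transforms).

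\textbf{Verifying \(\diverg B=F\).} Set \(G=F-\diverg B\). Then \(\diverg G=0\) and \(\curl G=0\), so each component of \(G\) is harmonic. We also show \(G\in L^{\eta^*}+L^\gamma\); the components of \(\diverg B\) are in this class by the same multiplier bounds. Every harmonic function in \(L^{\eta^*}+L^\gamma\) vanishes, by the mean-value property on large balls. Hence \(G=0\). This step uses \(\diverg F=0\).

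\textbf{\(L^{\eta^*}\) bounds.} By Hardy--Littlewood--Sobolev, \(|\nabla\Gamma|\lesssim|x|^{-1}\) maps \(L^\eta\to L^{\eta^*}\), which gives \(\|\psi_H\|_{L^{\eta^*}}\lesssim\|H\|_{L^\eta}\). The Calder\'on--Zygmund operators are bounded on \(L^{\eta^*}\), so \(\|\psi_Q\|_{L^{\eta^*}}\lesssim\|Q\|_{L^{\eta^*}}\).

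\textbf{\(L^\infty\) and H\"older bounds.} For \(\psi_H\), split the kernel \(|x-y|^{-1}\) into \(|x-y|<1\), estimated with the \(L^\gamma\) norm since \(|y|^{-1}\in L^{\gamma'}_{\loc}\) because \(\gamma>2\), and \(|x-y|\ge1\), estimated with the \(L^\eta\) norm since \(|y|^{-1}\in L^{\eta'}(\{|y|>1\})\) because \(\eta'>2\). The classical estimate for \(|\nabla\Gamma|*\) on \(L^\gamma\) data gives \(C^{0,1-2/\gamma}\) continuity locally. Differences at large scales are controlled by the \(L^\infty\) bound, so the H\"older seminorm is uniform for any exponent \(\le 1-2/\gamma\).

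For \(\psi_Q\), take a cutoff \(\zeta\) with \(\zeta=1\) on \(B_1\) and split the kernel \(K\) into \(K\zeta\) and \(K(1-\zeta)\).
\begin{itemize}
\item The far part \(K(1-\zeta)\) has size \(|x|^{-2}\), which lies in \(L^{(\eta^*)'}(\{|x|>1\})\). So this part is bounded by \(\|Q\|_{L^{\eta^*}}\), and it is Lipschitz because \(\nabla K\sim|x|^{-3}\).
\item The near part uses cancellation: \(\int_{S^1}K=0\) on circles. Hence \(\int K\zeta(x-y)\,(Q(y)-Q(x))\,dy\) plus a bounded constant-times-\(Q(x)\) term is controlled by \([Q]_{C^{0,\theta}}\int_{B_2}|z|^{\theta-2}\,dz\).
\end{itemize}
Truncated singular integrals preserve \(C^{0,\theta}\) locally (the standard Schauder potential estimate, as in GT Lemma~4.4), which gives \([\psi_Q]_{C^{0,\beta}}\) for \(\beta<\theta\), uniform in the base point.

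Combining these bounds gives \eqref{hodge-estimate} and \eqref{hodge-holder-estimate}, since \(|B|=|\psi|\).

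\textbf{Main obstacle.} The hardest step is the uniform \(L^\infty\) and H\"older control of the zero-order part \(\psi_Q\). Calder\'on--Zygmund operators are unbounded on \(L^\infty\), so the argument must genuinely combine cancellation of the kernel near the diagonal with \(L^{\eta^*}\) integrability in the far field, keeping all constants independent of the base point. A secondary subtlety is making the distributional identity \(\diverg B=F\) rigorous via the Liouville argument for \(G\), rather than working only at the level of symbols.
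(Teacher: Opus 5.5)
Your construction and estimates follow the paper's proof closely. You use the same splitting \(\psi=\psi_H+\psi_Q\), HLS and Calder\'on--Zygmund bounds in \(L^{\eta^*}\), and a near/far split of the first-order potential using \(\gamma\) near the diagonal and \(\eta\) at infinity. For \(\psi_Q\) you use kernel cancellation near the diagonal plus the \(L^{(\eta^*)'}\) tails of \(|z|^{-2}\) and \(|z|^{-3}\). Two variations are harmless: you truncate the kernel instead of localizing \(Q\) around a base point, and you estimate the H\"older norm of \(\psi_H\) directly as a first-order potential rather than through \(\|\nabla\psi_H\|_{L^\gamma}\) and Morrey. (GT Lemma~4.4, which you cite, is in fact the function-localized statement that the paper uses.)

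The step that fails as written is the verification of \(\diverg B=F\). You claim \(G=F-\diverg B\in L^{\eta^*}+L^\gamma\) because \(\diverg B\) lies in this class ``by the same multiplier bounds''. That holds only for the \(H\)-contributions: \(H\) and \(\nabla^\perp\psi_H\), a Riesz-type transform of \(H\), lie in \(L^\eta\cap L^\gamma\). But \(F\) contains \(\diverg Q\), and \(\diverg B\) contains \(\nabla^\perp\psi_Q\). Both are first derivatives of fields that are only in \(L^{\eta^*}\cap C^{0,\theta}_{\unif}\). They are order-one distributions, not functions, so no multiplier bound places them in \(L^{\eta^*}+L^\gamma\), and the pointwise mean-value argument cannot be applied to them. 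Weyl's lemma makes \(G\) smooth, but it gives no global integrability.

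What one actually has, using \(\psi_H,\psi_Q\in L^{\eta^*}\), is
\[
 G=H+\diverg S,\qquad S=Q-\psi R\in (L^{\eta^*}(\R^2))^{2\times2},\qquad H\in (L^\eta(\R^2))^2 .
\]
The paper then mollifies. \(G*\rho_\varepsilon\) is an entire harmonic field equal to \(H*\rho_\varepsilon\in L^\eta\) plus \((\diverg S)*\rho_\varepsilon\in L^{\eta^*}\). The mean-value property on large balls therefore forces \(G*\rho_\varepsilon=0\) for every \(\varepsilon\), hence \(G=0\). Equivalently, you can average \(G\) against a smooth radial weight \(\phi_R\) of unit mass and move the derivative onto \(\phi_R\), using \(\|\phi_R\|_{L^{\eta'}}+\|\nabla\phi_R\|_{L^{(\eta^*)'}}\to0\).

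Relatedly, \(\curl G=0\) requires \(\Delta\psi=\curl F\) in \(\mathcal D'(\R^2)\) for your explicitly defined potentials. As in the paper, obtain this by approximating \(H\) and \(Q\) with \(C_c^\infty\) fields and using the continuity of the HLS and Calder\'on--Zygmund operators, rather than reading it off the symbols.
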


\begin{proof}
We first take \(H_i,\,Q_{ij}\in C_c^\infty(\R^2)\). Since \(F\) is solenoidal, define the Fourier function of a stream function $\psi$ by
\[
 \widehat\psi(\xi)=\frac{\xi_1\widehat F_2(\xi)-\xi_2\widehat F_1(\xi)}{i|\xi|^2},
 \qquad \xi\ne0 .
 \label{stream-fourier}
\]
Then \((-\partial_2\psi,\partial_1\psi)=F\) in \(\mathcal S'(\R^2)\), because
\(\xi\cdot \widehat F(\xi)=0\). Thus \(B=\psi R\) satisfies \(\diverg B=F\). We decompose \(\psi=\psi_H+\psi_Q\) according to \(F=H+\diverg Q\).

Let \(\Gamma(x)=-(2\pi)^{-1}\log |x|\). The part generated by \(H\) is a finite sum of first-order potentials, namely
\[
 \psi_H=-\partial_1\Gamma*H_2+\partial_2\Gamma*H_1 .
\]
Since \(|\nabla\Gamma(x)|\lesssim |x|^{-1}\), the Hardy--Littlewood--Sobolev inequality in \(\R^2\) gives \(\|\psi_H\|_{L^{\eta^*}}\le C\|H\|_{L^\eta}\). The \(L^\infty\) bound follows by splitting the convolution into \(|z|<1\) and \(|z|\ge1\). Since \(|z|^{-1}\in L^{\gamma'}(B_1)\) with $\gamma'=\gamma/(\gamma-1)$ and \(|z|^{-1}\mathbf1_{\{|z|\ge1\}}\in L^{\eta'}(\R^2)\) with $\eta'=\eta/(\eta-1)>2$, the near field is controlled with the exponent \(\gamma>2\) and the far field with the exponent \(\eta<2\), respectively. Hence
\begin{equation}\label{psiH-est}
 \|\psi_H\|_{(L^{\eta^*}\cap L^\infty)(\R^2)}
 \le C\bigl(\|H\|_{((L^\eta\cap L^\gamma)(\R^2))^2}\bigr).
\end{equation}

We next consider the \(\diverg Q\)-part. Since \(\Delta\psi_Q=\curl(\diverg Q)\), \(\psi_Q\) is a finite sum of second-order singular integrals of the entries of \(Q\), up to the standard zero-order terms coming from the distributional identity for \(\partial_{\alpha\beta}\Gamma\). For \(z\ne0\),
\(\partial_{\alpha\beta}\Gamma(z)=-(2\pi)^{-1}(\delta_{\alpha\beta}-2\omega_\alpha\omega_\beta)|z|^{-2}\), \(\omega=z/|z|\). Thus the principal-value kernels are homogeneous kernels \(K(z)=\Omega(z/|z|)|z|^{-2}\), with \(\Omega\in C^\infty(S^1)\), even, and of mean zero. Grafakos' homogeneous singular-integral theorem \cite[Theorem~5.2.10]{Grafakos} gives
\begin{equation}\label{psiQ-Lp-est}
 \|\psi_Q\|_{L^{\eta^*}(\R^2)}
 \le C\|Q\|_{(L^{\eta^*}(\R^2))^{2\times2}}.
\end{equation}
The possible delta terms in \(\partial_{\alpha\beta}\Gamma=\operatorname{p.v.}\partial_{\alpha\beta}\Gamma+c_{\alpha\beta}\delta_0\) only produce bounded zero-order terms \(c_{\alpha\beta}Q\).

For the \(L^\infty\) estimate, write each singular-integral term as
\[
 TQ(x)=c_0Q(x)+\operatorname{p.v.}\int_{|z|<1}K(z)\bigl(Q(x-z)-Q(x)\bigr)\,dz
       +\int_{|z|\ge1}K(z)Q(x-z)\,dz .
 \label{TQ-decomp}
\]
The first term is bounded by \(\|Q\|_{(L^\infty(\R^2)})^{2\times 2}\). Since \(|K(z)|\le C|z|^{-2}\), the local principal value is bounded by \(C[Q]_{C^{0,\theta}_{\unif}}\). For the far field, \(\eta^*>2\) gives \(|z|^{-2}\mathbf1_{\{|z|\ge1\}}\in L^{(\eta^*)'}(\R^2)\), and H\"older's inequality gives a bound by \(C\|Q\|_{(L^{\eta^*}(\R^2))^{2\times 2}}\). Consequently
\begin{equation}\label{psiQ-Linfty-est}
 \|\psi_Q\|_{L^\infty(\R^2)}
 \le C\bigl(\|Q\|_{L^\infty(\R^2)}+[Q]_{C^{0,\theta}_{\unif}}+\|Q\|_{(L^{\eta^*}(\R^2))^{2\times 2}}\bigr).
\end{equation}
To obtain the H\"older estimate, first observe that \(\nabla\psi_H\) is a finite sum of
Calder\'on--Zygmund operators applied to \(H\). Hence
\(\|\nabla\psi_H\|_{L^\gamma}\le C\|H\|_{L^\gamma}\), and Morrey's inequality gives
\begin{equation}\label{psiH-holder-est}
 [\psi_H]_{C^{0,1-2/\gamma}_{\unif}}\le C\|H\|_{L^\gamma}.
\end{equation}
For a singular-integral term \(TQ\), fix \(x_0\in\R^2\) and choose a translated cutoff
\(\chi_{x_0}\) which equals one on \(B_2(x_0)\) and vanishes outside \(B_3(x_0)\).
The standard H\"older estimate for smooth homogeneous Calder\'on--Zygmund kernels gives
\[
 [T(\chi_{x_0}Q)]_{C^{0,\theta}(B_1(x_0))}
 \le C\bigl(\|Q\|_{L^\infty}+[Q]_{C^{0,\theta}_{\unif}}\bigr).
\]
On the other hand, \(|\nabla K(z)|\le C|z|^{-3}\), so H\"older's inequality yields
\[
 \|\nabla T((1-\chi_{x_0})Q)\|_{L^\infty(B_1(x_0))}
 \le C\|Q\|_{L^{\eta^*}},
\]
with a constant independent of \(x_0\). The zero-order terms obey the same local H\"older
bound. Thus
\begin{equation}\label{psiQ-holder-est}
 [\psi_Q]_{C^{0,\theta}_{\unif}}
 \le C\bigl(\|Q\|_{L^{\eta^*}}+\|Q\|_{L^\infty}+[Q]_{C^{0,\theta}_{\unif}}\bigr).
\end{equation}
Combining \eqref{psiH-est}, \eqref{psiQ-Lp-est}, \eqref{psiQ-Linfty-est},
\eqref{psiH-holder-est} and \eqref{psiQ-holder-est} proves the estimates for smooth compactly
supported data.

For general data, define \(\psi_H\) by the first-order potentials above and define the second-order potentials, in the principal-value sense including the zero-order terms, by
\[
 \psi_Q=-\sum_{i=1}^2\partial_1\partial_i\Gamma*Q_{i2}
        +\sum_{i=1}^2\partial_2\partial_i\Gamma*Q_{i1},
 \qquad \psi=\psi_H+\psi_Q.
\]
The preceding estimates apply directly, so \(\psi\in L^{\eta^*}(\R^2)\cap L^\infty(\R^2)\cap C^{0,\beta}_{\unif}(\R^2)\), and \eqref{hodge-estimate}--\eqref{hodge-holder-estimate} hold for \(B=\psi R\). Approximating \(H\) in \(L^\eta\) and \(Q\) in \(L^{\eta^*}\) by smooth compactly supported fields, the continuity of the Hardy--Littlewood--Sobolev and Calder\'on--Zygmund operators yields
\[
 \Delta\psi=\curl F
 \quad\hbox{in }\mathcal D'(\R^2).
\]
Let \(G=F-\diverg(\psi R)\). Since \(\diverg F=0\), we have \(\diverg G=\curl G=0\), and hence \(\Delta G=0\) in distributions. Moreover, with \(S=Q-\psi R\in (L^{\eta^*}(\R^2))^{2\times2}\),
\[
 G=H+\diverg S.
\]
For a standard mollifier \(\rho_\varepsilon\), the entire harmonic field \(G*\rho_\varepsilon=h_\varepsilon+g_\varepsilon\) has \(h_\varepsilon=H*\rho_\varepsilon\in L^\eta\) and \(g_\varepsilon=(\diverg S)*\rho_\varepsilon\in L^{\eta^*}\). The mean-value property therefore gives, for every fixed \(x\),
\[
 |(G*\rho_\varepsilon)(x)|
 \le C R^{-2/\eta}\|h_\varepsilon\|_{L^\eta}
      +C R^{-2/\eta^*}\|g_\varepsilon\|_{L^{\eta^*}}
 \longrightarrow0
 \qquad(R\to\infty).
\]
Thus \(G*\rho_\varepsilon=0\) for every \(\varepsilon>0\), and letting \(\varepsilon\downarrow0\) gives \(G=0\). Hence \(\diverg B=F\), completing the proof.
\end{proof}

\subsection{Harmonic-coordinate Hodge construction and Piola pull-back}\label{sec:transformed}

We now prove Theorem~\ref{thm:divred}. We take an arbitrary invariant measure satisfying the hypotheses of the theorem, push it to harmonic coordinates, construct the skew potential there, and pull it back by the Piola transform.

\begin{proof}[Proof of Theorem~\ref{thm:divred}]
Let \(\Theta(Y)=P^{-1}(Y)\) and \(J(x)=\det DP(x)\). We first record the periodic objects in harmonic coordinates. The following formulas are pointwise definitions after choosing the continuous representatives; all divergence identities are distributional. Periodicity follows from \(P(x+k)=P(x)+k\) and \(\Theta(Y+k)=\Theta(Y)+k\) with $Y=P(x)$ and $k\in\mathbb{Z}^2$. Set
\begin{align}
 \widehat m^{\per}(Y)&=\frac{\mper(\Theta(Y))}{J(\Theta(Y))},\label{hmper}\\
 \widehat B^{\per}(Y)&=\frac1{J(\Theta(Y))}DP(\Theta(Y))B^{\per}(\Theta(Y))DP(\Theta(Y))^T,\label{hBper}\\
 \widehat A^{\mathrm{div}}_{\per}(Y)&=\frac1{J(\Theta(Y))}DP(\Theta(Y))\Adiv_{\per}(\Theta(Y))DP(\Theta(Y))^T .\label{hAdivper}
\end{align}
The transformed periodic coefficients are
\(\haper(Y)=DP(\Theta(Y))\aper(\Theta(Y))DP(\Theta(Y))^T\) and \(\widehat b_{\per}=0\); moreover \(\haper\) is periodic, H\"older continuous, symmetric and uniformly elliptic, while \(\hae_{ij},\hbe_{i}\in L^M(\R^2)\cap L^\infty(\R^2)\), with uniform H\"older regularity after lowering the exponent if necessary. These assertions follow from the formulas in Theorem~\ref{thm:gauge}, the boundedness of \(DP,D^2P,D\Theta\), and the bi-Lipschitz change of variables.

For \(u(x)=w(P(x))\), the periodic identity gives
\[
 \mper L_{\per}u=-\diverg_x(\Adiv_{\per}\nabla_xu).
\]
Using \(\nabla_xu=DP^T\nabla_Yw\) and the Piola identity
\begin{equation}\label{piola-identity}
 \diverg_x\bigl(JDP^{-1}F(P(x))\bigr)=J\diverg_YF(P(x)).
\end{equation}
we obtain
\begin{equation}\label{periodic-hat-div}
 \widehat m^{\per}\widehat L_{\per}w
 =-\diverg_Y(\widehat A^{\mathrm{div}}_{\per}\nabla_Yw),
 \qquad
 \widehat A^{\mathrm{div}}_{\per}=\widehat m^{\per}\haper-\widehat B^{\per}.
\end{equation}
The identity \eqref{piola-identity} is the usual Piola formula and follows by direct differentiation for smooth fields, then by approximation. Since in two dimensions \(VRV^T=(\det V)R\) holds for any $V\in \R^{2\times 2}$ and $\begin{pmatrix}0&1\\ -1&0\end{pmatrix}$, the matrix \(\widehat B^{\per}\) is skew-symmetric whenever \(B^{\per}\) is skew-symmetric.

Let \(m=\mper+m^{\e}\) be a positive invariant density satisfying the hypotheses of the theorem. Define its push-forward density by
\begin{equation}\label{pushforwardm}
 \widehat m(Y)=\frac{m(\Theta(Y))}{J(\Theta(Y))}.
\end{equation}
For \(\zeta\in C_c^\infty(\R^2)\), put \(\phi=\zeta\circ P\). Then \(\phi\in C_c^2(\R^2)\), and the invariant-measure identity extends from \(C_c^\infty\) to such tests by approximation in the \(C^2\)-norm on the common compact support. The transformation formula gives \(L\phi(x)=\widehat L\zeta(P(x))\), hence \(\int \widehat m\,\widehat L\zeta=0\) follows from \(\int m L\phi=0\). Thus \(\widehat m\) is an invariant density for the transformed operator. Moreover
\begin{equation}\label{hme-decomp}
 \widehat m=\widehat m^{\per}+\widehat m^{\e},
 \qquad
 \widehat m^{\e}(Y)=\frac{m^{\e}(\Theta(Y))}{J(\Theta(Y))}.
\end{equation}
so \(\widehat m^{\e}\in L^{M^*}(\R^2)\cap L^\infty(\R^2)\), and \(\widehat m\) is bounded above and below.

We next record the regularity needed for the Hodge estimate. Since \(m\) solves the full adjoint equation and \(m,m^{\e}\in L^\infty_{\loc}\), Lemma~\ref{lem:adjlocal} applied on unit balls gives \(m\in C^{0,\theta_m}_{\unif}\). As \(\mper\in C^{0,\theta_0}_{\per}\), also \(m^{\e}\in C^{0,\bar\theta}_{\unif}\). Composition with \(\Theta\) and multiplication by \((J\circ \Theta)^{-1}\) preserve uniform H\"older continuity, after reducing the exponent; hence
\begin{equation}\label{hatmholder}
 \widehat m,\ \widehat m^{\e}\in C^{0,\theta_1}_{\unif}(\R^2).
\end{equation}
for some \(\theta_1>0\). The same applies to \(\haper\) and \(\hae\).

Because \(\widehat b^{\per}=0\), the defect part of the transformed source is
\begin{equation}\label{hodge-source}
 \widehat F^{\e}
 =\widehat m\,\hbe+
  \diverg\bigl(\widehat m^{\e}\haper+\widehat m\hae\bigr).
\end{equation}
The transformed invariant equation and the periodic identity \eqref{periodic-hat-div} give \(\diverg \widehat F^{\e}=0\) in distributions. Write \(\widehat F^{\e}=H+\diverg Q_H\), where \(H=\widehat m\hbe\) and \(Q_H=\widehat m^{\e}\haper+\widehat m\hae\). Since \(\widehat m\in L^\infty(\R^2)\) and \(\hbe_i\in L^M(\R^2)\cap L^\infty(\R^2)\), one has \(H_i\in L^M(\R^2)\cap L^\gamma(\R^2)\) for every finite \(\gamma>2\). Also \(Q_H\in (L^{M^*}(\R^2))^{2\times 2}\cap (L^\infty(\R^2))^{2\times 2}\cap (C^{0,\theta_1}_{\unif}(\R^2))^{2\times 2}\): the first summand is controlled by \(\widehat m^{\e}\in L^{M^*}(\R^2)\cap L^\infty(\R^2)\), while the second belongs to \((L^M(\R^2))^2\cap (L^\infty(\R^2))^2\subset (L^{M^*}(\R^2))^2\cap (L^\infty(\R^2))^2\). The H\"older bound follows from \eqref{hatmholder} and the uniform H\"older bounds for the coefficients.

Choose \(0<\beta<\theta_1\) and then \(\gamma>2/(1-\beta)\). Lemma~\ref{lem:hodge},
used with \(\eta=M\), gives a skew-symmetric matrix \(\widehat B^{\e}\) such that
\begin{equation}\label{hatBe}
 \widehat B_{ij}^{\e}\in L^{M^*}(\R^2)\cap L^\infty(\R^2)\cap C^{0,\beta}_{\unif}(\R^2),
 \qquad
 \diverg\widehat B^{\e}=\widehat F^{\e}.
\end{equation}
Adding the periodic part in \eqref{hBper}, set \(\widehat B=\widehat B^{\per}+\widehat B^{\e}\). Then
\begin{equation}\label{hatB-source}
 \diverg\widehat B=\widehat m\,\widehat b+
 \diverg(\widehat m\,\widehat a),
 \qquad
 \widehat A=\widehat m\,\widehat a-\widehat B .
\end{equation}
and therefore
\begin{equation}\label{hatdivid}
 \widehat m\,\widehat Lw=-\diverg_Y(\widehat A\nabla_Yw).
\end{equation}
(This formula is distributional; testing against compactly supported functions gives the stated identity.) Pull back \(\widehat A\) and \(\widehat B\) by the Piola formula:
\begin{equation}\label{piolapullAB}
 A(x)=J(x)DP(x)^{-1}\widehat A(P(x))DP(x)^{-T},
 \qquad
 B(x)=J(x)DP(x)^{-1}\widehat B(P(x))DP(x)^{-T}.
\end{equation}
Using \eqref{piola-identity} in \eqref{hatdivid} yields \(mLu=-\diverg_x(A\nabla u)\). Since \(\widehat a(P(x))=DP(x)a(x)DP(x)^T\), the first formula in \eqref{piolapullAB} gives
\[
 A=m\,a-B.
\]
The two-dimensional identity \(\det V\,V^{-1}RV^{-T}=R\) shows that the Piola pull-back preserves skew-symmetry. From \eqref{hBper} the periodic part pulls back exactly to \(B^{\per}\); hence \(B=B^{\per}+B^{\e}\), where
\[
 B^{\e}=JDP^{-1}\widehat B^{\e}(P)DP^{-T}=\widehat\psi^{\e}(P)R
 \in (L^{M^*}(\R^2))^{2\times 2}\cap (L^\infty(\R^2))^{2\times 2}
 \cap (C^{0,\beta}_{\unif}(\R^2))^{2\times 2}.
\]
Here \(\widehat B^{\e}=\widehat\psi^{\e}R\), and the last equality follows from the same
two-dimensional identity. Since
\[
 A^{\e}=m^{\e}\aper+m\arme-B^{\e},
\]
the uniform H\"older bounds for \(m,m^{\e},\aper,\arme\) give
\(A^{\e}\in (C^{0,\beta}_{\unif}(\R^2))^{2\times 2}\), after decreasing \(\beta\) if
necessary. The preceding integrability argument also gives
\(A^{\e}\in (L^{M^*}(\R^2))^{2\times 2}\cap(L^\infty(\R^2))^{2\times 2}\).
Any function in \(L^{M^*}(\R^2)\cap C^{0,\beta}_{\unif}(\R^2)\) tends to zero uniformly at
infinity: otherwise uniform H\"older continuity would produce infinitely many disjoint balls on
which its absolute value has a fixed positive lower bound. Hence \(A^{\e}(x),B^{\e}(x)\to0\)
uniformly as \(|x|\to\infty\). Finally, \(\xi\cdot A(x)\xi=m(x)\xi\cdot a(x)\xi\), because
\(B\) is skew. Since \(m\) has a positive lower bound and \(a\) is uniformly elliptic, \(A\) is coercive.
\end{proof}

\subsection{Endpoint restrictions}

The results are limited to scalar equations, $C^{0,\tau}$ regular coefficients and non-endpoint
exponents $1<q<2$, $1<r,s<2$.
The endpoint $q=1$ is not covered because the closedness step uses the finite-energy Liouville
theorem with $q^*>2$. The endpoints $r=1$ and $s=1$ are not covered. No system result is claimed.

\subsection*{Sharpness of the non-endpoint range}

The restrictions above are necessary for the estimates proved here. First, the strong endpoint $q=1$ already fails for the
Laplacian in two dimensions. If $f\in C_c^\infty(\R^2)$ and $\int_{\R^2}f\,dx\neq0$, the solution
of $-\Delta u=f$ satisfies
\[
 \nabla u(x)=\frac{\int_{\R^2}f\,dx}{2\pi}\frac{x}{|x|^2}+O(|x|^{-2})
 \qquad (|x|\to\infty),
\]
so $|\nabla u(x)|\sim |x|^{-1}$ and therefore $\nabla u\notin L^2(\R^2)$. Thus the estimate
corresponding to $q=1$, for which $q^*=2$, cannot hold in the strong form used here.

Second, the borderline $M=2$ is also critical for the Hodge step. Let
$\psi(x)=\log\log|x|$ for large $|x|$, with a smooth cutoff near the origin, and set
$F=\nabla^\perp\psi$. Then $F\in (L^2(\R^2))^2\cap (L^\infty(\R^2))^2$ because
$|\nabla\psi(x)|\simeq(|x|\log|x|)^{-1}$ at infinity, but any stream function $\varphi$ with
$\nabla^\perp\varphi=F$ differs from $\psi$ by a constant on the exterior region and is therefore
unbounded. This example shows the necessity of the restriction $1<M<2$ in the present Hodge
argument and of the additional $L^\gamma$, $\gamma>2$, input in Lemma~\ref{lem:hodge}.

Finally, defects in $L^1(\R^2)\cap L^\infty(\R^2)$ are still covered by the non-endpoint theory after choosing
any fixed $M\in(1,2)$, since interpolation gives $L^1(\R^2)\cap L^\infty(\R^2)\subset L^M(\R^2)$. If $C_M$ denotes
one of the constants produced after this choice of $M$, no assertion is made that
\[
 \sup_{1<M\le M_0} C_M<\infty
\]
for any fixed $M_0\in(1,2)$, and no endpoint estimate at $M=1$ is claimed.

\end{document}